\newcommand{\RestateRemark}[1]{{\normalfont\bfseries #1}}
\newcommand{\RestateInit}[1]{\newcommand{#1}{}}
\newcommand{\RestateGo}[1]{\renewcommand{#1}{(Restated)}}
\newcommand{\remove}[1]{}
\def\F{{\mathbb{F}}}
\def\Z{{\mathbb{Z}}}
\def\N{{\mathbb{N}}}
\def\cF{{\cal F}}
\def\cL{{\cal L}}
\def\cV{{\mathcal V}}
\def\V{{\mathbf{V}}}
\def\bi{{\mathbf{i}}}
\def\bj{{\mathbf{j}}}
\def\E{{\mathbb E}}
\def\prob{{\mathbf{Pr}}}
\def\rank{\textsf{rank}}
\def\spanV{\textsf{span}}
\def\EVAL{\textnormal{\textsf{EVAL}}}
\def\Coeff{\textsf{Coeff}}
\newcommand{\eps}{\epsilon}
\title{Linear Hashing with \texorpdfstring{$\ell_\infty$}{l-infinity} guarantees and two-sided Kakeya bounds}
\begin{document}

\maketitle

\begin{abstract}
We show that a randomly chosen linear map over a finite field gives a good hash function in the $\ell_\infty$ sense.  More concretely, consider a set $S \subset \F_q^n$ and a randomly chosen linear map $L : \F_q^n \to \F_q^t$ with $q^t$ taken to be sufficiently smaller than $ |S|$. Let $U_S$ denote a random variable distributed uniformly on $S$. Our main theorem shows that, with high probability over the choice of $L$, the random variable  $L(U_S)$ is close to uniform in the $\ell_\infty$ norm.  In other words, {\em every} element in the range $\F_q^t$ has about the same number of elements in $S$ mapped to it. This complements the widely-used Leftover Hash Lemma (LHL) which proves the analog statement under the statistical, or $\ell_1$, distance  (for a richer class of functions) as well as prior work on the expected largest 'bucket size' in linear hash functions \cite{LinearNoga}. By known bounds from the load balancing literature~\cite{MarAngeBalls}, our results are tight and show that linear functions  hash as well as truly random function up to a constant factor  in the entropy loss. Our proof leverages a connection between linear hashing and the finite field Kakeya problem and extends some of the tools developed in this area, in particular the polynomial method. 
\end{abstract} 


\section{Introduction}

Let $S \subset \{0,1\}^n$ be a set.  In many scenarios, one is interested in `hashing' the space $\{0,1\}^n$ into a smaller space so that the set $S$ (on which we may have little or no information) is mapped in a way that is close to uniform. Specifically, we may  need to find a function $H: \{0,1\}^n \to \{0,1\}^t$ so that the random variable $H(U_S)$ is close to the uniform distribution, where $U_S$ denotes a random variable distributed uniformly on the set $S$. An important parameter here is the `entropy-loss' given by $ \log_2|S| - t$. Clearly, this quantity has to be non negative, and, in practice, we would like it to be as small as possible. 

An important result in this area is the celebrated Leftover Hash Lemma (LHL) of Impagliazzo, Levin and Luby \cite{ILL} which asserts that the above scenario can be handled by choosing~$H$ at random from a family of universal hash functions (one in which for every $x \neq y$ the probability that $H(x) = H(y)$ is at most $2^{-t}$ over the choice of $H$).
\begin{lemma}[Leftover Hash Lemma \cite{ILL}]\label{lem-LHL}
Let $S \subset \{0,1\}^n$ and suppose $H : \{0,1\}^n \to \{0,1\}^t$  is chosen from a family of universal hash functions with $t \leq \log_2|S| - 2\log_2(1/\eps)$. Then the random variable\footnote{In the notation $(H,H(U_S)$ we assume that the function $H$ is represented by a string of bits of some fixed length.} $(H,H(U_S))$ is $\eps$-close to uniform in the $\ell_1$-norm.\footnote{Typically, the conclusion of the lemma is stated with respect to the statistical distance (or total variation distance) which is defined to be $1/2$ of the $\ell_1$ distance} 
\end{lemma}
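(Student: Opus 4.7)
My plan is to use the standard $\ell_2$-based (``collision probability'') argument. For any random variable $Y$ supported on a finite set $\Omega$ of size $N$, write $\mathrm{CP}(Y) := \sum_{\omega \in \Omega} \Pr[Y=\omega]^2$. A direct expansion of squares gives the identity $\|Y - U_\Omega\|_2^2 = \mathrm{CP}(Y) - 1/N$, and then Cauchy--Schwarz yields
\[
\|Y - U_\Omega\|_1 \;\leq\; \sqrt{N \cdot (\mathrm{CP}(Y) - 1/N)}.
\]
The whole proof reduces to a careful upper bound on $\mathrm{CP}(Z)$ for $Z := (H, H(U_S))$.

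Next I would bound $\mathrm{CP}(Z)$, which lives on $\Omega := \mathcal{H} \times \{0,1\}^t$, by taking two independent copies $(H_1, H_1(X_1))$ and $(H_2, H_2(X_2))$, where $X_1, X_2$ are independent copies of $U_S$ drawn independently of $H_1, H_2$. They collide iff $H_1 = H_2$ (contributing a factor $1/|\mathcal{H}|$, since $H$ is uniform over the family) \emph{and} the images coincide. For the latter event I would condition on whether $X_1 = X_2$: the ``equal'' branch contributes $1/|S|$, while the ``unequal'' branch is bounded by $2^{-t}$ using the universal family property $\Pr_H[H(x)=H(y)] \leq 2^{-t}$ for $x \ne y$. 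This gives
\[
\mathrm{CP}(Z) \;\leq\; \frac{1}{|\mathcal{H}|}\left(\frac{1}{|S|} + \frac{1}{2^t}\right).
\]
Since $1/|\Omega| = 1/(|\mathcal{H}|\cdot 2^t)$, subtraction cancels the $2^{-t}$ term exactly, leaving $\mathrm{CP}(Z) - 1/|\Omega| \leq 1/(|\mathcal{H}|\cdot |S|)$.

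Combining the two steps, $\|Z - U_\Omega\|_1 \leq \sqrt{|\mathcal{H}|\cdot 2^t \cdot \tfrac{1}{|\mathcal{H}|\cdot|S|}} = \sqrt{2^t / |S|}$, and the hypothesis $t \leq \log_2|S| - 2\log_2(1/\eps)$ is exactly equivalent to $2^t/|S| \leq \eps^2$, giving $\|Z - U_\Omega\|_1 \leq \eps$, as required. There is no truly hard step here; the entire content is the collision-probability identity plus one invocation of Cauchy--Schwarz, with universality plugged in. The loss of a factor of $2$ in the entropy is precisely the square root introduced by Cauchy--Schwarz, and is well known to be the unavoidable price of converting an $\ell_2$ bound into an $\ell_1$ bound --- exactly the gap that motivates the $\ell_\infty$ strengthening pursued in the rest of this paper.
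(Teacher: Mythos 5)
Your argument is correct and is precisely the standard collision-probability proof of the Leftover Hash Lemma: the paper does not reprove this statement but cites \cite{ILL}, whose argument is exactly the $\ell_2$-to-$\ell_1$ route you describe (the identity $\|Z-U\|_2^2=\mathrm{CP}(Z)-1/N$, the universality bound $\mathrm{CP}(Z)\le \frac{1}{|\mathcal{H}|}(\frac{1}{|S|}+2^{-t})$, and one application of Cauchy--Schwarz). Nothing further is needed; the only implicit convention worth stating is that $H$ is drawn uniformly from the family, which is the intended reading of the lemma.
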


A few comments about the LHL are in order. The first is that, using a standard averaging argument, the LHL implies that, for any given set, most choices of $H$ will be good, in the sense that $H(U_S)$ will be close to uniform in the $\ell_1$ distance.  It is also known that the {\em entropy loss} of the LHL, namely $2\log(1/\eps)$, is the smallest possible for any family of functions \cite{JaikumarShma2000}. Lastly, it is possible to generalize the LHL to handle arbitrary distributions of high min-entropy\footnote{A distribution has min-entropy at least $k$ if any output has probability at most $2^{-k}$.} (not just  those uniform on a set).  This follows from the fact that any distribution with min-entropy $k$ is a convex combination of `flat' distributions (those uniform on a set of size $2^k$).

A convenient choice of a universal family of hash functions is that given by all linear maps over the finite field  of two elements $\F_2$. That is, the LHL says that, if one picks a linear map $L :\F_2^n \to \F_2^t$ uniformly at random, then, with high probability over the choice of $L$, the random variable $L(U_S)$ will be close to uniform in the $\ell_1$-distance. Our main theorem shows that, with slightly larger entropy loss, one can give a stronger guarantee on the output, stated in $\ell_\infty$ distance to uniform. A reason to consider linear maps is their simplicity and ease of implementation (only requiring very basic bit operations) for applications. Since the full statement of the theorem is quite technical (stemming from our attempts to optimize the various constants) we start by giving an informal statement. The full statements of our results (also for other larger finite fields) are given in Section~\ref{sec-formal}.

\begin{theorem}[Main theorem (informal)]\label{thm-main-informal}
Let $S \subset \F_2^n$ and let $t = \log_2|S| - O(\log_2(\log_2|S|/\tau\delta))$. Then, a $(1 - \delta)$-fraction of all linear maps $L :\F_2^n \to \F_2^t$ are such that $L(U_S)$ is $\tau 2^{-t}$-close to uniform in the $\ell_\infty$ norm. That is, for all $y \in \F_2^t$ we have  $$ | \prob[ L(U_S)=y ] - 2^{-t}| \leq \tau 2^{-t}.$$
\end{theorem}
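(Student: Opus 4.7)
My plan is to use a high-moment concentration argument combined with a union bound over the $2^t$ possible buckets. For a fixed $y \in \F_2^t$, let $X_y = |S \cap L^{-1}(y)|$; its expectation under a random surjective linear $L$ is $\mu := |S|/2^t$ (up to lower-order corrections that I would absorb). I would pick an even integer $2k$ to be optimized later, roughly $k = \Theta(\log(|S|/\delta))$, and aim to bound the centered moment $M_{2k} := \E_L[(X_y-\mu)^{2k}]$. Then by Markov and a union bound, the probability of a bad event is at most $2^t \cdot M_{2k}/(\tau\mu)^{2k}$, which I would require to be at most $\delta$.

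To estimate $M_{2k}$, I would first reduce to $y=0$ by a symmetry/translation argument, and then expand into a sum over tuples $(x_1,\ldots,x_{2k}) \in S^{2k}$ of $\Pr_L[L(x_i)=0\ \forall i]$. The key identity, coming from the linearity of $L$ and uniform randomness, is that for any set of vectors this probability equals $2^{-td}$, where $d = \dim\,\spanV(x_1,\ldots,x_{2k})$. Grouping tuples by $d$, the ``generic'' contribution from tuples of full span ($d=2k$) combines with the centering term $-\mu$ to cancel to leading order, leaving error terms coming from tuples whose elements lie in some linear subspace of dimension strictly less than $2k$. The contribution from dimension $d$ is controlled by sums of the form
\begin{equation*}
\sum_{V:\,\dim V = d}|S\cap V|^{2k},
\end{equation*}
which quantify how much of the mass of $S$ can concentrate in low-dimensional subspaces.

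The main obstacle is obtaining sharp enough bounds on these subspace-concentration sums; this is where the Kakeya connection enters. A naive bound (number of $d$-dimensional subspaces times $|S|^{2k}$) is far too weak, because there are exponentially many subspaces. The polynomial method developed for the finite-field Kakeya problem --- construct a low-degree polynomial vanishing appropriately on $S$ and exploit its behavior on subspaces --- is the natural tool, but what we need here is a \emph{two-sided} version strong enough to rule out both over- and under-concentration of $S$ in any single bucket. Threading such bounds through all $d < 2k$ simultaneously, and then balancing $k$ against $\log|S|$, $1/\tau$, and $1/\delta$, should yield the claimed logarithmic entropy loss of $O(\log(\log_2|S|/\tau\delta))$; getting the constants in that bound correct is the most delicate part of the argument.
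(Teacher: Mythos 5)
There is a genuine gap, and it is structural rather than technical: your framework of ``per-bucket moment bound plus union bound over the $2^t$ buckets'' cannot give the stated parameters, no matter how sharp the moment estimates are. Consider $S$ a linear subspace of $\F_2^n$ of dimension $s=t+\ell$, where $\ell=\log_2\mu$ is the (logarithmic) entropy loss, so $\mu=|S|/2^t\approx \log_2|S|/(\tau\delta)^2$. Every fiber $L^{-1}(y)\cap S$ is either empty or a coset of $S\cap\ker L$, so whenever $L|_S$ is rank-deficient by one, \emph{every} bucket is off by a factor of $2$ or is empty, and this happens with probability about $2^{-\ell}=\Theta(1/\mu)$. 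Thus for each fixed $y$ the true deviation probability $\prob\bigl[\,|X_y-\mu|>\tau\mu\,\bigr]$ is genuinely $\Omega(1/\mu)=\Omega\bigl((\tau\delta)^2/\log_2|S|\bigr)$, which is astronomically larger than the $\delta\cdot 2^{-t}$ your union bound requires; equivalently, your requirement $2^t M_{2k}/(\tau\mu)^{2k}\le\delta$ forces $\mu$ to be exponential in $t$, not logarithmic in $|S|$. The theorem is nevertheless true for this $S$ because the failures of different buckets are one and the same global event (rank deficiency of $L|_S$), whose probability is small; any proof must treat all buckets of a given $L$ jointly. The same example shows that the subspace-concentration sums $\sum_{V:\dim V=d}|S\cap V|^{2k}$ you hope to control are genuinely huge for structured $S$, so no absolute, dimension-by-dimension bound of that kind can hold; the cancellations coming from the centering would have to be exploited exactly, which is where the plan breaks down. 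Moreover, the Kakeya-type polynomial method does not supply bounds of that shape: it gives \emph{lower} bounds on the size of sets that are rich in many directions, not upper bounds on how an arbitrary $S$ concentrates on subspaces.

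The paper's route is different precisely on this point. It passes to the kernel view: $L$ is good iff its kernel $A$ is $\tau$-shift-balanced, i.e.\ \emph{all} cosets $a+A$ meet $S$ in roughly $E_k(S)$ points, so all buckets are handled at once and no union bound over $y$ appears. Assuming a $\delta$-fraction of kernels are bad, one picks for each bad direction an unbalanced flat $f(A)+A$, shows by Chebyshev (pairwise independence) that random $(k-2)$-flats are typically balanced while random $(k-2)$-flats inside an unbalanced $k$-flat are typically unbalanced, and then, after fixing a good parallel class $\hat W$, observes that the union of unbalanced $(k-2)$-flats parallel to $\hat W$ is a small set that is nonetheless a $(2,\gamma q^2,\beta)$-Furstenberg set; a new polynomial-method lower bound for such Furstenberg sets (with linear, not exponential, dependence on $\beta$) yields the contradiction. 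The $\F_2$ statement is then obtained from the large-field theorem by packing coordinates into an extension field together with a random rotation argument, and the $n$ in the entropy loss is replaced by $\log_2|S|$ by first hashing injectively into about $|S|^2$ elements. If you want to salvage a moment-style argument, you would have to compute moments of a statistic that measures the imbalance of an entire kernel (over all its shifts simultaneously), which is essentially what the shift-balanced formulation accomplishes.
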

An equivalent way to state this theorem comes from the observation that the set of elements in $\F_2^n$ mapping to a particular $y \in \F_2^t$ is always of the form $a_y + U$, where $U$ is the kernel of the mapping $L$ and $a_y \in \F_2^n$ is some shift.  In this view, the theorem says that most $(n-t)$-dimensional subspaces $U \subset \F_2^n$ are such that {\em all} of their shifts intersect $S$ in about the same number of points (up to a multiplicative factor of $1 \pm \tau$). We devote Section~\ref{sec-furstenberg} to a more detailed treatment of this view, which will be the one used in the proof. The question of bounding the maximal `bucket size' (all elements mapping to a single $y \in \F_2^t$) in a random  linear hash function was previously studied and  we compare our results to the state-of-the-art in this area (\cite{LinearNoga}) in Section~\ref{sec-formal} after the formal statement of our results.

Our choice of the letter $\tau$ instead of $\eps$ as in the LHL is not accidental and is meant to highlight the fact that, in the $\ell_\infty$ setting, we can take $\tau$ to be  greater than~$1$. When $\tau < 1$ the conclusion of our theorem, namely that $L(U_S)$ is $\tau/2^{-t}$-close to uniform in $\ell_\infty$, implies that $L(U_S)$ is also $\tau$-close to uniform in $\ell_1$. However, our theorem is still meaningful when $\tau > 1$, even though it says nothing about $\ell_1$ distance. The advantage of taking $\tau$ to be large comes from the fact that it can reduce our entropy loss (this can be done up to a point, as is stated in the formal theorem statement below). To give an example of a scenario in which we can take large~$\tau$, consider the case where the linear map $L$ is used to derive a key for a digital signature scheme. We would like the key $L(U_S)$ to be close to uniform since we know that a uniform key prevents the adversary from producing a forgery with more than negligible probability. However, if we apply our theorem with large $\tau$ (say polynomial in $n$) we get that the probability of producing a forgery may increase by at most a factor of $1+\tau$ which still results in negligible probability of forgery. More generally, the case of large $\tau$ is relevant whenever we only care about events of small probability staying small. Another paper that focuses on these aspects of the LHL (that is, when we only care about low probability events) is \cite{LHLRevisited}.

The need for $\ell_\infty$ guarantees for hashing appears in many places in the literature. For example, in  Cryptography,  in the context of key generation for local data storage~\cite{dictionaryCrypt} and batch verifying zero-knowledge proofs~\cite{StatZeroCrypto} and in Computational  in the context of  uniformly generating a solution to NP-search problems (see Section 6.2.4.2 in \cite{goldreich_2008}). It is possible to guarantee~$\ell_\infty$ hashing by using a larger and more complex classes of functions, for example high degree polynomials over a large finite field~\cite{ALONrandomized}. For the applications in \cite{dictionaryCrypt} and \cite{goldreich_2008} our results allows one to use linear maps instead of polynomials, hence simplifying the proofs.

Our proofs leverage a connection between linear hashing and finite field Furstenberg sets (which generalize Kakeya sets). A $k$-dimensional Furstenberg set $S \subset \F_q^n$ is a set which has a large intersection with a $k$-flat ($k$-dimensional affine subspaces) in each direction. That is, for any $k$-dimensional subspace $U \subset \F_q^n$ there is a shift $s(U)$ such that the affine subspace $s(U) + U$ has a large intersection with the set $S$. The goal in this area is to prove lower bounds on the size of such sets. Surprisingly, such lower bounds play a role in explicit constructions of seeded extractors \cite{DW08,DKSS13} which are randomness efficient variants of the LHL. However, the connection between Furstenberg sets and linear hashing we leverage in this paper  is  {\em unrelated} to the work on extractors mentioned above and is of a completely different nature. This connection was first observed in \cite{DDL-2} and was used there to improve the best lower bounds on Furstenberg sets. Our work relies heavily on the methods developed in \cite{DDL-2} (as well as other papers) and extends them in several respects. We devote Section~\ref{sec-furstenberg} to a more complete discussion of this connection and, in particular, to explaining the phrase `two-sided Kakeya bounds' from the title of the paper.

\paragraph{Acknowledgments:} We are  grateful to Or Ordentlich, Oded Regev and Barak Weiss for comments that led us to pursue this line of work. Their interest in theorems of this kind  arose from trying to strengthen their breakthrough \cite{ORW22} on lattice coverings, which uses the two dimensional Kakeya bounds of \cite{KLSS2011}. (A new paper by the same group of authors, using the results of the current paper, is in preparation.) We are also grateful to the reviewers for their suggestions, especially for pointing out that Theorem~\ref{thm-smoothSlice} also follows from our arguments.

\paragraph{Paper organization:} The rest of the paper is organized as follows. In Section~\ref{sec-formal} we state our main theorems formally. In Section~\ref{sec-related} we discuss the tightness of our results, compare them to prior work, and discuss possible generalizations.
In Section~\ref{sec-furstenberg} we discuss the connection to the theory of Furstenberg/Kakeya sets and introduce notations and definitions that will be used in the proofs. In Section~\ref{sec-overview} we give a high level overview of the proof. Section~\ref{sec-proofs}  contains the  proofs of our  main theorems with a  lemma, giving an improved bound on Furstenberg sets, proved in Section~\ref{sec-furstenberg-proof}.

\section{Formal statement of our results}\label{sec-formal}

This section contains four variants of our main result. The four cases correspond to the distinction between large finite fields and $\F_2$ and between arbitrary  $\tau$  and the special case $\tau > 1$ (in which we can get slightly better constants). We begin with the statement for large finite field and arbitrary $\tau$.

\begin{theorem}\label{thm-mainhash}
Let  $n \geq 5$ and let $S \subset \F_q^n$ be a set.  Let $\tau > 0$ be a real number and $\delta\in (0,1)$ such that 
$$q \geq 32\max\left(\frac{n(1+\tau)}{(\tau\delta)^2},n\right).$$ Suppose $q^r < |S| \leq q^{r+1}$ for some $4 \leq r \leq n-1$ and let $t = r-3$.  Then a $(1-\delta)$-fraction of all surjective linear maps $L : \F_q^n \to \F_q^t$ are such that  $L(U_S)$ is $\tau/q^t$-close to the uniform distribution in the $\ell_\infty$ norm.
\end{theorem}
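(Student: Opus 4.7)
The plan is to translate the $\ell_\infty$ condition into a statement about affine shifts of subspaces. Given a surjective linear map $L:\F_q^n \to \F_q^t$, let $U = \ker(L)$, an $(n-t)$-dimensional subspace; for each $y\in\F_q^t$ the preimage $L^{-1}(y)$ is an affine shift $a_y + U$. The conclusion
$$\bigl|\prob[L(U_S)=y] - q^{-t}\bigr| \le \tau q^{-t}\quad \text{for all } y \in \F_q^t$$
is equivalent to saying that every affine shift of $U$ intersects $S$ in $(1\pm\tau)|S|/q^t$ points. Since uniformly choosing a surjective $L$ is the same (up to the action of $GL_t(\F_q)$) as uniformly choosing an $(n-t)$-dimensional subspace $U$, the theorem reduces to showing that at most a $\delta$-fraction of $(n-t)$-dimensional subspaces are \emph{bad} in this affine-shift sense.

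I would then split the bad subspaces into two classes: \emph{upward-bad} (some shift contains more than $(1+\tau)|S|/q^t$ points of $S$) and \emph{downward-bad} (some shift contains fewer than $(1-\tau)|S|/q^t$). The upward-bad case is handled by a direct Furstenberg-type application of the polynomial method, in the spirit of \cite{DDL-2}: build a nonzero polynomial $P$ of degree approximately $r$ vanishing on $S$, observe that on any heavy shift $P$ restricted to that flat has more zeros than its degree and so vanishes on the whole flat, and bound the number of $(n-t)$-dimensional subspaces that can supply such a heavy shift by a degree/dimension count. Given $|S| \le q^{r+1}$ and the slack $t = r-3$, this should push the upward-bad fraction below $\delta/2$ as long as $q$ is as large as assumed.

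The main obstacle is the downward-bad case, which is where the title's ``two-sided Kakeya'' phrase does real work. A light shift of $U$ does not automatically produce a heavy shift of $U$ (even for the complement $S^c$), since averaging over the $q^t$ cosets only redistributes a deficit of $\tau|S|/q^t$, which is negligible compared to the average $|S^c|/q^t \approx q^{n-t}$. So a naive reduction to the classical one-sided Furstenberg bound applied to $S^c$ does not give a useful conclusion. This is precisely what the improved Furstenberg lemma promised in Section~\ref{sec-furstenberg-proof} must supply: a genuinely two-sided statement that absorbs both upward and downward deviations into a single polynomial-method count, presumably by pairing the vanishing polynomial $P$ with a companion construction that witnesses the under-concentration of $P$ on a light flat (for example, by examining the multiplicity of zeros or using a higher-order polynomial method). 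Assuming that lemma, the downward-bad fraction is also at most $\delta/2$ and the theorem follows by union bound, with the quantitative hypothesis $q \ge 32\max(n(1+\tau)/(\tau\delta)^2,\, n)$ chosen exactly to make the error terms in the Furstenberg bound fall below $\delta/2$ on each side. The hard step is formulating and proving this two-sided lemma; I would expect it to be the central technical contribution behind the theorem.
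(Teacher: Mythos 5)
Your reduction to shift-balanced kernels is the same first step the paper takes, but from there the proposal has a genuine gap: the entire two-sided difficulty is deferred to a conjectured ``two-sided polynomial-method lemma'' that you neither formulate nor indicate how to prove, and no such lemma exists in the paper. The Furstenberg statement actually used (Lemma~\ref{lem-fur2-intro}) is an ordinary \emph{one-sided} lower bound on the size of $(2,\gamma q^2,\beta)$-Furstenberg sets; the two-sidedness is handled entirely by a probabilistic dimension-reduction, not by the polynomial method. Concretely, the paper argues by contradiction: (i) by Chebyshev and pairwise independence, a uniformly random $(k-2)$-flat is $\tau/2$-unbalanced with probability only $O(1/(\tau^2 q))$ (Claim~\ref{cla-Bsmall}); (ii) inside any $\tau$-unbalanced $k$-flat --- whether over-full or under-full, which is why both deviation directions are absorbed here --- a random parallel copy of a typical $(k-2)$-subspace is $\tau/2$-unbalanced with probability $1-O((1+\tau)/(\tau^2 q))$ (Claims~\ref{cla-unbalancedflat} and~\ref{clm-manyshifts}); (iii) fixing one good direction $\hat W$ (Claim~\ref{cla-existsW}), the union $K$ of all $\tau/2$-unbalanced $(k-2)$-flats parallel to $\hat W$ is a \emph{small} set that is nonetheless $(2,\gamma q^2,\beta)$-Furstenberg with $\gamma = 1-O((1+\tau)/(\tau^2q))$ and $\beta = \Omega(\delta)$ (Claim~\ref{cla-Kakeyaset}); and (iv) Lemma~\ref{lem-fur2-intro}, whose linear dependence on $\beta$ is the real technical novelty, contradicts the smallness of $K$. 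Without some reduction of this kind, your plan has no route to the downward-bad case, which you yourself identify as the crux.

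Your treatment of the upward-bad case is also not workable as sketched with the theorem's field size. Applying a Furstenberg-type bound directly to $S$ means working with richness $m \approx (1+\tau)|S|/q^{n-k}$, i.e.\ $\gamma = m/q^k \approx (1+\tau)|S|/q^n \ll 1$; bounds of the shape $|S| \gtrsim \beta\gamma^n q^n$ are then vacuous, and the tight bound of \cite{DDL-2} for nearly-full richness needs $q$ exponential in $n$, whereas the theorem only assumes $q \geq 32\max(n(1+\tau)/(\tau\delta)^2, n)$. This is precisely why the paper descends to $(k-2)$-flats inside unbalanced $k$-flats: there the relevant richness is $\gamma q^2$ with $\gamma$ close to $1$, so the polynomial-method bound has teeth at polynomial field size. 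Also, a polynomial vanishing on $S$ cannot have degree ``approximately $r$'': to interpolate a set of size $q^r$ one needs degree roughly $n|S|^{1/n}$, and a heavy shift of the kernel does not force such a polynomial to vanish on the whole flat by a Schwartz--Zippel count at these parameters.
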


Notice that, in the setting above, the entropy loss, when measured in $\F_q$-dimension is at most 4. The restriction to the case of surjective linear maps is natural as these are maps that do not `lose' entropy unnecessarily  (one can consider all linear maps by increasing $\delta$ slightly). 

The above theorem can be used to derive similar results for small fields, by treating blocks of coordinates as representing elements in an extensions field. We do this for every possible choice of basis to ensure that our theorem works for all surjective $\F_2$-linear maps. We only treat the case of $\F_2$ as this is the field  most commonly used in applications (the same proof strategy will work for any  finite field).

\RestateInit{\restatebinary}
\begin{restatable}{theorem}{theobinary}
    \label{thm-binary}\RestateRemark{\restatebinary}
	Let $S \subset \F_2^n$ be such that $|S| > 2^{20}\max(n^4(1+\tau)^4/(\tau\delta)^8,n^4)$ and let $n,\tau,\delta$ satisfy $n\ge 5 \lceil\log_2(\max(n(1+\tau)/(\tau\delta)^2,n))\rceil+25$. Then there exists a natural number 
	$$t\ge  \log_2|S| - 4\log_2\left(\max\left(\frac{n(1+\tau)}{(\tau\delta)^2},n\right)\right)-20,$$ 
	such that a $(1-\delta)$-fraction of all surjective linear maps $L : \F_2^n \to \F_2^t$ are such  that $L(U_S)$ is $\tau 2^{-t}$-close to uniform in the $\ell_\infty$ norm.
\end{restatable}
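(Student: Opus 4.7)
The plan is to deduce Theorem~\ref{thm-binary} from Theorem~\ref{thm-mainhash} by working over a sufficiently large extension field $\F_q = \F_{2^m}$ of $\F_2$. Set $m := 5 + \lceil \log_2\max(n(1+\tau)/(\tau\delta)^2, n)\rceil$ and $q := 2^m$, so that $q \ge 32\max(n(1+\tau)/(\tau\delta)^2, n)$. The hypothesis on $|S|$ is calibrated so that $|S| > q^4$, and the hypothesis on $n$ gives $n \ge 5m \ge 5$; thus Theorem~\ref{thm-mainhash} applies to $S$ viewed inside $\F_q^n$ via the coordinate-wise inclusion $\F_2 \hookrightarrow \F_q$. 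Let $r$ be the integer with $q^r < |S| \le q^{r+1}$, put $t' := r-3$, and $t := mt'$.

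By Theorem~\ref{thm-mainhash}, a $(1-\delta)$-fraction of $\F_q$-linear surjections $L: \F_q^n \to \F_q^{t'}$ satisfy that $L(U_S)$ is $\tau q^{-t'}$-close to uniform on $\F_q^{t'}$ in $\ell_\infty$. Now fix an $\F_2$-basis of $\F_q$ and let $\Phi : \F_q^{t'} \to \F_2^t$ be the induced $\F_2$-linear isomorphism (noting $q^{t'} = 2^t$). For each such $L$, set $\widetilde L := \Phi \circ L|_{\F_2^n} : \F_2^n \to \F_2^t$. Writing each entry of the $t' \times n$ matrix of $L$ in the chosen basis produces the $t \times n$ matrix of $\widetilde L$: this is an entrywise bijection from $\F_q^{t' \times n}$ onto $\F_2^{t \times n}$ that sends the uniform distribution to the uniform distribution. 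Since $\widetilde L(U_S) = \Phi(L(U_S))$ as distributions on $\F_2^t$, the $\ell_\infty$ guarantee transfers verbatim: $L$ is good in the $\F_q$-sense if and only if $\widetilde L$ is good in the $\F_2$-sense.

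The one subtlety is that Theorem~\ref{thm-mainhash} conditions on $\F_q$-surjectivity of $L$ while the conclusion of Theorem~\ref{thm-binary} concerns $\F_2$-surjectivity of $\widetilde L$. A rank argument shows $\widetilde L$ is $\F_2$-surjective if and only if $L|_{\F_2^n}: \F_2^n \to \F_q^{t'}$ is surjective, and this forces $L$ to be $\F_q$-surjective (since the $\F_2$-span is contained in the $\F_q$-span). Moreover, since $t < \log_2|S| - 3m \le n - 3m$, we have $n-t > 3m$, so standard counting bounds the non-$\F_2$-surjective fraction by $O(2^{-3m})$, which is much smaller than $\delta$. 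Combining, among uniform $\F_2$-linear surjections $\widetilde L : \F_2^n \to \F_2^t$, at least a $(1-\delta)$-fraction are good (after absorbing the negligible surjectivity correction). The entropy bound $t = mt' \ge m(\log_q|S| - 4) = \log_2|S| - 4m$ then matches the claimed lower bound after plugging in the chosen $m$.

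The main obstacle is the careful bookkeeping for the two surjectivity conditions: the bijection at the matrix level is easy, but one must confirm that conditioning on $\F_q$- versus $\F_2$-surjectivity picks out closely matching subsets. A secondary difficulty is tuning the numerical constants tightly enough to reach exactly the bound $t \ge \log_2|S| - 4\log_2(\max(\ldots)) - 20$ stated in the theorem rather than a slightly weaker version, which may be why the authors instead describe grouping coordinates into blocks and averaging over bases for $\F_{2^m}/\F_2$; that route more directly yields the clean hypothesis $n \ge 5m$ but requires additional work to cover all $\F_2$-linear surjections.
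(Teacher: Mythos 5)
Your proposal is correct in its essentials but takes a genuinely different route from the paper. The paper embeds $\F_2^n$ into $\F_q^{n'}$ with $n'=\lceil n/\ell\rceil$ by grouping coordinates into blocks of $\ell$ bits; in that setting the $\F_q$-linear maps form a vanishingly small subset of all $\F_2$-linear maps $\F_2^{n'\ell}\to\F_2^{t}$, so the paper must add a random-rotation argument over $\mathrm{GL}_{n'\ell}(\F_2)$, averaging over the choice of $\F_2$-linear identification $\phi\circ M$, to conclude that \emph{every} surjective $\F_2$-linear map is covered equally often. You instead keep the ambient dimension equal to $n$ and enlarge the field coordinatewise, and your key observation is that writing each entry of the $t'\times n$ matrix of $L$ in an $\F_2$-basis of $\F_q$ is an exact bijection between $\F_q$-linear maps $\F_q^n\to\F_q^{t'}$ and $\F_2$-linear maps $\F_2^n\to\F_2^{t}$ ($t=mt'$), under which $\widetilde L(U_S)=\Phi(L(U_S))$ and the $\ell_\infty$ guarantee transfers verbatim. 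This removes the rotation step entirely; what it buys is a structurally simpler reduction, and what it costs is the surjectivity bookkeeping you identify, since $\F_2$-surjectivity of $\widetilde L$ implies, but is not implied by, $\F_q$-surjectivity of $L$.

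One caution on that last step: the correction goes in the unfavorable direction. The bad $\F_2$-surjective maps inject into the bad $\F_q$-surjective maps, so the bad fraction among $\F_2$-surjective maps is bounded by $\delta\cdot|\{\F_q\text{-surjective}\}|/|\{\F_2\text{-surjective}\}|\le \delta/(1-2^{t-n})$, which is slightly \emph{larger} than $\delta$; to land on exactly a $(1-\delta)$-fraction you must invoke Theorem~\ref{thm-mainhash} with $\delta$ replaced by $\delta(1-2^{t-n})$ and check the field-size condition still holds (it does, using the slack from the ceiling in your choice of $m$, except in borderline cases where $m$ must grow by one). Likewise your parameters give entropy loss $4m\le 4\log_2(\max(\cdot))+24$ and need $|S|>q^4$ with $q$ possibly as large as $64\max(\cdot)$, so reaching the stated constants ($-20$ and $2^{20}$) requires the same kind of rounding care present in the paper's own computation; these are calibration issues, not gaps in the argument's structure.
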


When $|S|$ is small we can improve the previous theorem by replacing the $n$ in the entropy loss by $\log_2|S|$. This is achieved using the following simple lemma, which allows us to first hash~$S$ into a universe of size roughly $|S|^2$ without any collisions.

\begin{lemma}\label{lem-birthHash}
Let $S\subset \F_2^n$ and
$$t \ge \log_2(|S|(|S|-1)/2\delta).$$
Then, at least a $(1-\delta)$-fraction of all surjective linear maps $L: \F_2^n\to \F_2^t$ map $S$ injectively into $\F_2^t$.
\end{lemma}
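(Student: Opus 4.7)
The plan is to run a standard birthday-style union bound, with a small amount of extra care so that conditioning on surjectivity does not break anything. First I would note that in characteristic~$2$ a surjective linear map $L$ fails to be injective on $S$ exactly when there exist distinct $x,y\in S$ with $L(x-y)=0$, and $x-y=x+y$ is a nonzero vector. So it suffices to bound, for each fixed nonzero $v\in\F_2^n$, the probability $\Pr[L(v)=0]$ with $L$ chosen uniformly among surjective linear maps $\F_2^n\to\F_2^t$, and then apply a union bound over the $\binom{|S|}{2}$ pairs in $S$.

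For the per-pair bound, the natural move is a direct count. Identify surjective linear maps with $t\times n$ matrices of rank $t$; the total count is $\prod_{i=0}^{t-1}(2^n-2^i)$. Those with $L(v)=0$ are precisely the rank-$t$ matrices whose rows all lie in the hyperplane $v^\perp\subset\F_2^n$, and their count is $\prod_{i=0}^{t-1}(2^{n-1}-2^i)$. Taking the ratio and comparing factor-by-factor shows each factor $(2^{n-1}-2^i)/(2^n-2^i)$ is at most $1/2$ for $i\le n-2$, so the ratio is at most $2^{-t}$. (The cases $t\ge n-1$ are degenerate and make the probability either $0$ or only easier to control.) If a more conceptual presentation is desired, the same bound drops out from $GL_n(\F_2)$-symmetry: the distribution of $L(v)$ under a random surjective $L$ only depends on whether $v=0$, and standard column-counting recovers the same $2^{-t}$.

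Combining the two ingredients gives failure probability at most
\[
\binom{|S|}{2}\cdot 2^{-t} \;=\; \frac{|S|(|S|-1)}{2}\cdot 2^{-t},
\]
which is at most $\delta$ precisely under the hypothesis $t\ge \log_2(|S|(|S|-1)/2\delta)$, completing the proof. I do not expect any real obstacle: the only step that is not entirely mechanical is confirming that conditioning on surjectivity does not inflate the per-pair collision probability past $2^{-t}$, and the counting argument above settles this cleanly. The lemma will then be used to pre-hash $S$ injectively into a smaller universe before invoking the main $\ell_\infty$ theorem, which is why the bound $|S|^2/\delta$ in the range is exactly what one wants.
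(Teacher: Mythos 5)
Your proof is correct and follows essentially the same route as the paper: bound the per-pair collision probability of a uniformly random surjective linear map by $2^{-t}$ and apply a union bound over the $\binom{|S|}{2}$ pairs, which under the hypothesis on $t$ gives failure probability at most $\delta$. The only difference is that the paper dispatches the per-pair bound by citing that surjective linear maps form a universal family, whereas you verify it explicitly by counting rank-$t$ matrices whose rows lie in the hyperplane $v^\perp$; this is a valid (indeed slightly stronger, since each factor $(2^{n-1}-2^i)/(2^n-2^i)$ is below $1/2$) way to settle the same step.
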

\begin{proof}
    As surjective linear maps are a universal family of hash functions we have,
    $$\prob[L(x)=L(y)]\le 1/2^t\le \delta \frac{2}{|S|(|S|-1)}$$
    for a random surjective linear map $L:\F_2^n\to \F_2^t$ and $x,y\in S,x\ne y$.
    By applying the union bound we see the probability that $L$ is not injective is upper bounded by $\delta$.
\end{proof}

Applying the above lemma followed by Theorem~\ref{thm-binary} immediately leads to a concrete instance of Theorem~\ref{thm-main-informal}.

\begin{theorem}\label{thm-BetBinary}
	Let $S \subset \F_2^n,\tau,\delta\in (0,1)$ and $m=\log_2(|S|(|S|-1)/\delta)$ be such that 
	\begin{align}
	    |S| &> 2^{20}m\max(2^8(1+\tau)^4/(\tau\delta)^8,1)\label{eq-co1}\\
	    m&\ge 5 \log_2(m\max(4(1+\tau)/(\tau\delta)^2,1))+25\label{eq-co2},
	\end{align}
	then there exists a natural number 
	$$t\ge  \log_2|S| - 4\log_2\left(m\max\left(\frac{4(1+\tau)}{(\tau\delta)^2},1\right)\right)-20,$$ 
	such that a $(1-\delta)$-fraction of all surjective linear maps $L : \F_2^n \to \F_2^t$ are such  that $L(U_S)$ is $\tau 2^{-t}$-close to uniform in the $\ell_\infty$ norm.
\end{theorem}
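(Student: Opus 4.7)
The plan is a direct chain of Lemma~\ref{lem-birthHash} and Theorem~\ref{thm-binary}: first use an injective preliminary hash to collapse $S$ into a much smaller ambient space $\F_2^m$ without any collisions, and then apply the main $\ell_\infty$ theorem to the image there. I would split the error budget, allocating $\delta/2$ to each of the two steps.

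More concretely, the definition $m = \log_2(|S|(|S|-1)/\delta)$ exactly meets the threshold of Lemma~\ref{lem-birthHash} with error $\delta/2$, so a uniformly random surjective linear map $L_1 : \F_2^n \to \F_2^m$ is injective on $S$ outside an event of probability at most $\delta/2$; on this good event $S' := L_1(S) \subset \F_2^m$ has cardinality $|S|$. I would then apply Theorem~\ref{thm-binary} to $S'$ inside $\F_2^m$ with parameter $\tau$ and error $\delta/2$. After the substitutions $n \mapsto m$ and $\delta \mapsto \delta/2$, the size and dimension hypotheses of Theorem~\ref{thm-binary} become precisely conditions~(\ref{eq-co1}) and~(\ref{eq-co2}) of the present statement, and the entropy-loss conclusion reduces to the stated lower bound on $t$ (the additional constants inside the logarithm and the additive $-20$ absorb the factors picked up from halving $\delta$). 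The output is that, outside a set of $L_2$'s of measure at most $\delta/2$, the pushforward $L_2(U_{S'})$ is $\tau 2^{-t}$-close to uniform in $\ell_\infty$.

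It remains to turn the two-step composition into a statement about a single random surjection $L : \F_2^n \to \F_2^t$. For this I would use the elementary fact that if $L_1$ and $L_2$ are drawn independently and uniformly from the surjective linear maps $\F_2^n \to \F_2^m$ and $\F_2^m \to \F_2^t$ respectively, then $L_2 \circ L_1$ is itself uniformly distributed over the surjective linear maps $\F_2^n \to \F_2^t$. A short counting argument justifies this: the number of pairs $(L_1, L_2)$ that compose to a fixed surjection $L$ depends only on $n, m, t$ — it equals a Gaussian binomial coefficient counting the $(n-m)$-subspaces of $\ker L$, multiplied by $|\mathrm{GL}_m(\F_2)|$ — and hence is independent of $L$. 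On the intersection of the two good events one has $L(U_S) = L_2(U_{S'})$, so a union bound over the two failure probabilities of $\delta/2$ delivers the $(1-\delta)$-fraction conclusion.

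The only mildly delicate step is the arithmetic bookkeeping that links the hypotheses of Theorem~\ref{thm-binary} to~(\ref{eq-co1})--(\ref{eq-co2}) after halving $\delta$ and re-labeling the ambient dimension as $m$; I expect no conceptual obstacle there, just a careful tracking of constants.
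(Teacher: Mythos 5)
Your proposal is correct and is essentially the paper's own proof: the paper likewise applies Lemma~\ref{lem-birthHash} with error $\delta/2$ to map $S$ injectively into $\F_2^m$ and then invokes Theorem~\ref{thm-binary} with error $\delta/2$ on the image. The counting argument showing that the composition of independent uniform surjections $\F_2^n\to\F_2^m\to\F_2^t$ is a uniform surjection $\F_2^n\to\F_2^t$ is left implicit in the paper, and your making it explicit is a correct and welcome addition rather than a deviation.
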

\begin{proof}
    We apply Lemma~\ref{lem-birthHash} for $\delta/2$ and linear maps from $\F_2^n\to\F_2^m$ followed by applying Theorem~\ref{thm-binary} for $\delta/2$ and linear maps from $\F_2^m\to \F_2^t$.
\end{proof}

The conditions \eqref{eq-co1} and \eqref{eq-co2} are not very restrictive. In the setting $\tau=\delta=1/n^C$ for some constant $C$ conditions \eqref{eq-co1} and \eqref{eq-co2} are satisfied for $|S|\ge n^{C'}$ where $C'$ only depends on $C$.

An interesting setting of parameters for Theorem~\ref{thm-BetBinary} is that of $\tau = 1/\delta^2$. In this case (when~$\delta$ is sufficiently small), the two terms in the `max' function above are about the same and we get an entropy loss of $4\log_2(4\log_2(|S|(|S|-1)/\delta))$. With this entropy loss, we get that the output $L(U_S)$ is $(1/\delta)^2 \cdot 2^{-t}$ close to uniform in the $\ell_\infty$ norm. Or, in other words, for $(1-\delta)$-fraction of linear maps $L$,  the probability of any event under $L(U_S)$ is at most  a multiplicative factor of $1/\delta^2$ larger than its probability under the uniform distribution. In this setting \eqref{eq-co1} and \eqref{eq-co2} are satisfied by ensuring $|S|$ is larger than some fixed universal constant.

\paragraph{Improvements when $\tau>1$: } In this setting, we can improve the constant in the above two theorems slightly. We start with the case of large finite field. In the following theorem, the bound on the size of $q$ does not contain the constant 32 appearing in Theorem~\ref{thm-mainhash}. The dependence of $q$ on $\tau$ changes from $\frac{1+\tau}{\tau^2}$ to $\frac{1+\tau}{(\tau-\sqrt{\tau})^2}$ which are asymptotically the same when $\tau$ grows. Hence, when  $\tau$ is sufficiently large, the saving in $q$ is roughly a factor of $32$. The price we pay for this improvement is  the need for  $n$ to be at least $ 20$ (as opposed to 5) and an upper bound $\delta < 1/10$.

\begin{theorem}\label{thm-imphash}
Let  $n \geq 20$ and let $S \subset \F_q^n$ be a set.  Let $\tau > 1$ be a real number and $\delta\in (0,1/10)$ such that 
$$q \geq \max\left(n\frac{1+\tau}{(\tau-\sqrt{\tau})^2\delta^2},n\right).$$
Suppose $q^r < |S| \leq q^{r+1}$ for some $4 \leq r \leq n-1$ and let $t = r-3$.  Then a $(1-\delta)$-fraction of all surjective linear maps $L : \F_q^n \to \F_q^t$ are such that $L(U_S)$ is $\tau/q^t$-close to the uniform distribution in the $\ell_\infty$ norm.
\end{theorem}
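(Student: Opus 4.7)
The plan is to repeat the proof strategy of Theorem~\ref{thm-mainhash} but with sharper parameter choices exploiting the assumption $\tau > 1$. When $\tau > 1$ the lower-tail half of the $\ell_\infty$ guarantee, $\Pr[L(U_S)=y] \ge (1-\tau)/q^t$, is automatic, so we only need to control the upper tail: for most $(n-t)$-dimensional subspaces $U = \ker(L)$, every coset $a+U$ should intersect $S$ in at most $(1+\tau)|S|/q^t$ points. Equivalently, we want to rule out the event that more than a $\delta$-fraction of $(n-t)$-subspaces $U$ admit at least one heavy shift $a_U$ with $|(a_U+U) \cap S| > (1+\tau)|S|/q^t$.

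First I would cast the collection of bad $(U, a_U)$ pairs as a Furstenberg-type configuration: the family $\mathcal{F} = \{a_U + U : U \text{ bad}\}$ is a union of $(n-t)$-flats, each containing many points of $S$, spanning many distinct directions. The polynomial method then furnishes a nonzero polynomial $P$ of appropriate total degree vanishing on $S$. On any heavy flat, $P$ has more zeros than the Schwartz--Zippel bound permits, so $P$ vanishes identically on each such flat. The refined Furstenberg-set lower bound (the lemma proved in Section~\ref{sec-furstenberg-proof}) then lower-bounds $|\bigcup_{U \text{ bad}}(a_U+U)|$, and since $P$ vanishes on this union, its support forces a contradiction with the degree bound unless the fraction of bad $U$ is below $\delta$.

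The improvement in constants — removing the factor~$32$ and replacing $\tau^2$ by $(\tau - \sqrt{\tau})^2$ in the denominator of the lower bound on $q$ — should come from a two-step concentration argument. First one absorbs a $\sqrt{\tau}$-sized deviation using the slack afforded by $\tau > 1$ (for instance, by applying Chebyshev or a second-moment estimate to a statistic averaged over the bad shifts or directions), leaving a residual excess of $\tau - \sqrt{\tau}$ that must be handled by the Furstenberg/polynomial-method step. The hypotheses $n \ge 20$ and $\delta < 1/10$ reflect that this sharper analysis needs enough ambient dimension for the concentration step to have room to operate and a small enough failure probability to justify the first-moment/union-bound steps used in its derivation.

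The main obstacle will be verifying that the polynomial-method loss in the refined Furstenberg lemma is genuinely $(\tau - \sqrt{\tau})^2/(1+\tau)$ rather than $\tau^2/(1+\tau)$: this requires a careful variance or higher-moment calculation and one must check that the flat-sliced polynomial argument of Theorem~\ref{thm-mainhash} tolerates the looser $\sqrt{\tau}$ cushion without sacrificing the degree bound. Assuming this step goes through, the remaining deduction should proceed almost verbatim from the proof of Theorem~\ref{thm-mainhash}, with the parameters retuned to exploit $\tau > 1$.
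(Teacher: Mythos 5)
Your high-level plan is the right one: the paper proves Theorem~\ref{thm-imphash} by re-running the proof of Theorem~\ref{thm-mainhash} verbatim and only retuning parameters to exploit $\tau>1$, and your observation that for $\tau>1$ only the upper tail matters is consistent with how the argument plays out (case~1 of the anti-concentration claim is vacuous when $\tau>1$). However, you have located the source of the improved constants in the wrong place, and the step you flag as the ``main obstacle'' does not exist. No refinement of the polynomial method is needed, and there is no ``refined Furstenberg lemma'' carrying a $(\tau-\sqrt{\tau})^2/(1+\tau)$ loss: Lemma~\ref{lem-fur2-intro} is completely independent of $\tau$ and is reused unchanged. The entire gain comes from the elementary Chebyshev steps. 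The proof of Theorem~\ref{thm-mainhash} introduces an intermediate unbalancedness threshold $\sigma$ (a random $(k-2)$-flat is $\sigma$-balanced with probability $\ge 1-\tfrac{1}{\sigma^2 q}$ by Claim~\ref{cla-Bsmall}, while a random $(k-2)$-flat inside a $\tau$-unbalanced $k$-flat is $\sigma$-unbalanced with probability $\ge 1-\tfrac{1+\tau}{(\tau-\sigma)^2 q}$ by Claim~\ref{cla-unbalancedflat}); any $\sigma$ with $\tau-\sigma>0$ works. For general $\tau$ one must take something like $\sigma=\tau/2$, which costs the factor $32$; when $\tau>1$ one may instead take $\sigma=\sqrt{\tau}$ (legal precisely because $\tau-\sqrt{\tau}>0$), which makes the concentration bound $1/(\tau q)$ and puts the margin $(\tau-\sqrt{\tau})^2$ into the anti-concentration bound. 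This $\gamma=1-\tfrac{c(1+\tau)}{(\tau-\sqrt{\tau})^2 q}$ is then fed into the \emph{same} Furstenberg bound, and the final contradiction \eqref{eq-bifur} is checked numerically with $c=10$; the hypotheses $\delta\le 1/10$ and $n\ge 20$ exist solely to make that closing inequality go through, not to give a concentration step ``room to operate.''

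So the genuine gap is this: if you pursue your stated plan of proving a $\tau$-dependent strengthening of the flat-sliced polynomial argument via ``a careful variance or higher-moment calculation,'' you will be attempting something that is both unnecessary and not how the degree/multiplicity bookkeeping in Section~\ref{sec-furstenberg-proof} works (that argument only sees the richness fraction $\gamma$ and the direction fraction $\beta$). Relatedly, your description of the Furstenberg configuration is off: the auxiliary polynomial vanishes on the constructed set $K$ (a union of $\sigma$-unbalanced $(k-2)$-flats parallel to one fixed good direction $\hat W$), not on $S$, and the rich flats are $2$-flats obtained after this reduction, not $(n-t)$-flats --- the reduction to dimension $2$ is essential because the known bounds for higher-dimensional rich flats are too weak. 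Once you replace your ``refined lemma'' step by the simple substitution $\sigma=\sqrt{\tau}$, $c=10$ in the existing proof, the rest of your outline does indeed go through essentially verbatim.
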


As before, this can be used to  prove a version over $\F_2$ for large $\tau$ with improved constants.

\begin{theorem}\label{thm-impbinary}
	Let $S \subset \F_2^n$ and let $\delta\le 1/10$, $\tau >1$ be such that $|S| > \max(n^4(1+\tau)^4/((\tau-\sqrt{\tau})\delta)^8,n^4)$ and $n,\tau,\delta$ satisfy $n\ge 20\lceil\log_2(\max(n(1+\tau)/((\tau-\sqrt{\tau})\delta)^2,n))\rceil$. Then there exists a natural number 
	$$t\ge  \log_2|S| - 4\log_2\left(\max\left(n\frac{1+\tau}{(\tau-\sqrt{\tau})^2\delta^2},n\right)\right),$$ such that a $(1-\delta)$-fraction of all surjective linear maps $L : \F_2^n \to \F_2^t$ have the property that $L(U_S)$ is $\tau 2^{-t}$-close to uniform in the $\ell_\infty$ norm.
\end{theorem}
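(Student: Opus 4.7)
The plan is to derive Theorem~\ref{thm-impbinary} from Theorem~\ref{thm-imphash} in exactly the same way that Theorem~\ref{thm-binary} is derived from Theorem~\ref{thm-mainhash}, simply propagating the improved constants. The idea is to view $\F_2^n$ as $\F_q^{n'}$ for $q = 2^k$ and $n' = \lfloor n/k \rfloor$, where $k$ is taken just large enough for $q$ to satisfy the field-size hypothesis of Theorem~\ref{thm-imphash}. Concretely, set $k = \lceil \log_2(\max(n(1+\tau)/((\tau-\sqrt{\tau})^2\delta^2),\, n)) \rceil$; then $q = 2^k \geq \max(n(1+\tau)/((\tau-\sqrt{\tau})^2\delta^2),\, n)$. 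The hypothesis $n \geq 20\lceil \log_2(\ldots)\rceil$ guarantees $n' \geq 20$, matching the $n \geq 20$ requirement of Theorem~\ref{thm-imphash}. Choose $t'' = r-3$ where $q^r < |S| \le q^{r+1}$ and set $t = k t''$; the size bound $|S| > \max(n^4(1+\tau)^4/((\tau-\sqrt{\tau})\delta)^8, n^4)$ translates into $r \geq 4$, again matching Theorem~\ref{thm-imphash}. A short calculation shows $t \geq \log_2|S| - 4\log_2(\max(n(1+\tau)/((\tau-\sqrt\tau)^2\delta^2), n))$, as required.

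The key observation that makes the transfer possible is that the conclusion of the theorem depends only on the kernel of $L$: a surjective $\F_2$-linear map $L : \F_2^n \to \F_2^t$ is ``good'' in the $\ell_\infty$ sense if and only if every $\F_2$-affine shift of $\ker L$ meets $S$ in between $(1-\tau)|S|/2^t$ and $(1+\tau)|S|/2^t$ points. Since the natural (uniform) distribution on surjective $\F_2$-linear maps induces the uniform distribution on $(n-t)$-dimensional $\F_2$-subspaces (post-composition by $GL_t(\F_2)$ acts freely on each kernel), it suffices to prove that a $(1-\delta)$-fraction of all such subspaces $U$ have this ``every shift is balanced'' property. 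For each choice of basis $B$ identifying $\F_2^n$ with $\F_q^{n'}$, Theorem~\ref{thm-imphash} gives precisely this property for a $(1-\delta)$-fraction of $\F_q$-subspaces of $\F_q$-dimension $n'-t''$ (which become $\F_2$-subspaces of the correct dimension $n-t = k(n'-t'')$).

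To upgrade from ``good as an $\F_q$-subspace for each fixed basis'' to ``good as an $\F_2$-subspace,'' I would use a double-counting argument over pairs $(U, B)$ where $U$ is a ``bad'' $\F_2$-subspace of dimension $n-t$ and $B$ is a basis under which $U$ happens to be an $\F_q$-subspace. Every $\F_2$-subspace whose dimension is divisible by $k$ can be realized as an $\F_q$-subspace under \emph{some} basis: pick an $\F_2$-basis of $U$, extend to one of $\F_2^n$, and partition it into $k$-blocks respecting $U$. Moreover, by the transitivity of $GL_n(\F_2)$ on $(n-t)$-dim $\F_2$-subspaces, the number of bases $B$ realizing a given $U$ as an $\F_q$-subspace is independent of $U$. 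Combining this with the Theorem~\ref{thm-imphash} bound for each fixed $B$, a standard double count yields that the fraction of bad $\F_2$-subspaces is at most $\delta$.

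The main obstacle I anticipate is the bookkeeping in the divisibility constraints: we need $k$ to divide $n-t$ (and ideally $n$ and $t$ individually) for the block decomposition to make sense, and the size of the remainder term affects the constants. The slack built into the lower bound on $t$ in the theorem statement absorbs the rounding, but making the constants $20$ and the exponent $4$ come out exactly as stated requires tracking $k$, $n'$, $t''$, and the $\F_q$-to-$\F_2$ dimension conversions carefully---in essence the same accounting as in the proof of Theorem~\ref{thm-binary}, with $32$ removed from the $q$ bound and $\tau^2$ replaced by $(\tau - \sqrt{\tau})^2$, and using $n\geq 20$ from Theorem~\ref{thm-imphash} in place of $n\geq 5$ from Theorem~\ref{thm-mainhash}. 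The additive constant $20$ that appears in Theorem~\ref{thm-binary} but not in Theorem~\ref{thm-impbinary} is gained because Theorem~\ref{thm-imphash}'s $q$-bound no longer carries a factor of $32$.
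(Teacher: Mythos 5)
Your overall route is the same as the paper's: the paper proves Theorem~\ref{thm-impbinary} by running the proof of Theorem~\ref{thm-binary} verbatim with Theorem~\ref{thm-imphash} in place of Theorem~\ref{thm-mainhash}, i.e., group bits into blocks of size $k\approx\log_2(\max(n(1+\tau)/((\tau-\sqrt\tau)\delta)^2,n))$, view them as coordinates over $\F_q$ with $q=2^k$, apply the large-field theorem, and then use a $\text{GL}$-symmetrization to pass from $\F_q$-linear maps to \emph{all} surjective $\F_2$-linear maps. Your reformulation in terms of kernels, together with the double count over pairs $(U,B)$ using transitivity of $\text{GL}_n(\F_2)$ on $(n-t)$-dimensional subspaces, is a correct and essentially equivalent rendering of the paper's ``random rotation'' step (the paper rotates the identification $\phi$ by a random $M\in\text{GL}_{n'\ell}(\F_2)$ and notes each surjective map lies in equally many sets $M\cdot H$); your observation that goodness depends only on $\ker L$ and that uniform surjective maps induce uniform kernels is also fine.

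The genuine gap is the very first step: setting $n'=\lfloor n/k\rfloor$ and ``viewing $\F_2^n$ as $\F_q^{n'}$'' is impossible unless $k\mid n$, which is the atypical case (here $k\approx\log_2 n$). When $k\nmid n$ this is not a constants/rounding matter that slack in $t$ can absorb: $\F_q^{\lfloor n/k\rfloor}$ has only $2^{k\lfloor n/k\rfloor}<2^n$ elements, $S$ need not lie in any such sub-block, and $\F_2^n$ carries no $\F_q$-vector-space structure at all, so Theorem~\ref{thm-imphash} simply has nothing to say about $S$ in your setup. The paper's fix goes the other way: take $n'=\lceil n/k\rceil$, embed $\F_2^n$ (and hence $S$) into the \emph{larger} space $\F_q^{n'}\cong\F_2^{n'k}$, prove the $(1-\delta)$ statement for surjective maps on $\F_2^{n'k}$, and then transfer back down to surjective maps $\F_2^n\to\F_2^t$ (for instance: every surjective $M:\F_2^n\to\F_2^t$ extends to exactly $2^{t(n'k-n)}$ surjective maps on $\F_2^{n'k}$, all with the same goodness, since goodness depends only on the restriction to $S\subseteq\F_2^n$). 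Your double count is compatible with this padding, but it must then be run over subspaces of the padded space $\F_2^{n'k}$, and the back-transfer to $\F_2^n$ is an additional step that your proposal does not supply; it is exactly the step the paper handles (tersely) before its rotation argument. With that repair, the rest of your bookkeeping (using $n\ge 20\lceil\log_2(\cdot)\rceil$ to get $n'\ge 20$, $r\ge 4$ from the size bound, $t=(r-3)k$, loss at most $4k$) matches the paper's, including the same mild rounding slack the paper itself incurs from the ceiling in $k$.
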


We can again use Lemma~\ref{lem-birthHash} to improve the entropy loss in the previous theorem.

\begin{theorem}\label{thm-Betimpbinary}
	Let $S \subset \F_2^n$ and let $\delta\le 1/10$, $\tau >1$ and $m=\log_2(|S|(|S|-1)/\delta)$ be such that 
	\begin{align*}
	    |S| &> m^4\max(2^8(1+\tau)^4/((\tau-\sqrt{\tau})\delta)^8,1)\\
	    m&\ge 20\lceil\log_2(m\max(4(1+\tau)/((\tau-\sqrt{\tau})\delta)^2,1))\rceil\,.
	\end{align*} Then there exists a natural number 
	$$t\ge  \log_2|S| - 4\log_2\left(m\max\left(\frac{4(1+\tau)}{(\tau-\sqrt{\tau})^2\delta^2},1\right)\right),$$ such that a $(1-\delta)$-fraction of all surjective linear maps $L : \F_2^n \to \F_2^t$ have the property that $L(U_S)$ is $\tau 2^{-t}$-close to uniform in the $\ell_\infty$ norm.
\end{theorem}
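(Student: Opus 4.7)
The plan is to imitate the derivation of Theorem~\ref{thm-BetBinary} from Lemma~\ref{lem-birthHash} and Theorem~\ref{thm-binary}, but to substitute Theorem~\ref{thm-impbinary} for Theorem~\ref{thm-binary} so as to inherit the sharper constants available when $\tau > 1$ and $\delta \le 1/10$. Concretely, I would set $m = \log_2(|S|(|S|-1)/\delta)$ and first draw a uniform surjective linear map $L_1 : \F_2^n \to \F_2^m$. By Lemma~\ref{lem-birthHash} applied with parameter $\delta/2$, which requires $m \ge \log_2(|S|(|S|-1)/\delta)$---and this holds by the very definition of $m$---the map $L_1$ is injective on $S$ with probability at least $1 - \delta/2$. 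On this event the image $S' := L_1(S) \subset \F_2^m$ has the same cardinality as $S$, and $L_2(U_{S'}) = (L_2 \circ L_1)(U_S)$ for any further map $L_2$.

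Next I would draw an independent uniform surjective linear map $L_2 : \F_2^m \to \F_2^t$ and invoke Theorem~\ref{thm-impbinary} with the set $S'$, ambient space $\F_2^m$ (playing the role of $\F_2^n$ there), and error parameter $\delta/2$. Tracking the substitution of $\delta/2$ for $\delta$: the identity $(\delta/2)^2 = \delta^2/4$ produces the factor of $4$ appearing inside the $\max$ in both the hypothesis on $m$ and in the lower bound on $t$, while $(\delta/2)^8 = \delta^8/2^8$ produces the $2^8$ appearing in the hypothesis on $|S|$. The two hypotheses of Theorem~\ref{thm-impbinary} therefore coincide precisely with the two displayed conditions in the statement of Theorem~\ref{thm-Betimpbinary}, and the resulting $t$ matches the stated lower bound. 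Theorem~\ref{thm-impbinary} then yields that $L_2(U_{S'})$ is $\tau 2^{-t}$-close to uniform in the $\ell_\infty$ norm with probability at least $1 - \delta/2$ over $L_2$.

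A union bound shows that with probability at least $1 - \delta$ over the independent pair $(L_1, L_2)$ both good events occur, so $L := L_2 \circ L_1$ has the desired $\ell_\infty$-approximation property. To conclude I would verify the standard fact that the composition of independent uniform surjective linear maps is itself uniform over surjective maps $\F_2^n \to \F_2^t$: for any fixed such $L$, the number of factorizations $L = L_2 \circ L_1$ with $L_1$ surjecting onto $\F_2^m$ equals $|GL_m(\F_2)|$ times the number of $(n-m)$-dimensional subspaces of $\ker(L)$, a count depending only on $n, m, t$. I foresee no real obstacle---the argument is essentially a mechanical copy of the proof of Theorem~\ref{thm-BetBinary}---so the only nontrivial effort is the algebraic bookkeeping above together with this distribution check.
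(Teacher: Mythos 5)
Your proposal is correct and is essentially the paper's own argument: Theorem~\ref{thm-Betimpbinary} is obtained exactly by applying Lemma~\ref{lem-birthHash} with parameter $\delta/2$ to hash $S$ injectively into $\F_2^m$ and then invoking Theorem~\ref{thm-impbinary} with parameter $\delta/2$ on the image, mirroring the proof of Theorem~\ref{thm-BetBinary}. Your bookkeeping of the constants ($4$ and $2^8$ from substituting $\delta/2$) and the uniformity of the composition of independent uniform surjective maps are both right and simply make explicit what the paper leaves implicit.
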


\subsection{Some comments} \label{sec-related}

\paragraph{Tightness of our results:} 

It is natural to ask whether our results are tight. Fixing the parameter $\delta$ to be constant for the sake of simplicity, can we possibly improve on the entropy loss stated in  Theorem~\ref{thm-BetBinary}? The answer is a resounding No! Even  for a truly random function, the results of \cite{MarAngeBalls} show that we need an entropy loss of at least $\log_2(\log_2|S|/\tau^2)$ (up to a additive constant) to achieve the conclusion of Theorem~\ref{thm-BetBinary}.  Hence, up to a reasonably small constant factor (of about 32), linear functions hash as well as random functions.

\paragraph{Prior results on linear hash functions:}
Properties of random linear hashes with respect to the $\ell_\infty$ norm have been studied in earlier works~\cite{CARTERWEGMANS,mehlhorn1984randomized,LinearNoga} with  \cite{LinearNoga} being the state-of-the-art. The results in this area are typically stated as upper bounds on the expected   'maximal bucket size' (that is, the maximum size of $L^{-1}(y)$ over all $y \in \F_q^t$). We will see that earlier results only give bounds for $\tau\gg 1$ (as far as we know, our paper is the first to give $\ell_\infty$ guarantees for small $\tau$). 

Theorem 5  of \cite{LinearNoga} is the most relevant to this work and shows that, when $\log_2|S| - t = \log_2(t)$ the expected maximal bucket size is $O(t\log_2(t))$. A Markov argument shows then, that, with probability at least $1 - \delta$, the maximal bucket size is at most $O(t\log_2(t)/\delta)$ which is a factor of $\log_2(t)/\delta$ larger than the trivial bound of $|S|/2^t = t$. Note that $\log_2(t)/\delta\gg 1$. 

Theorem~\ref{thm-BetBinary} for small $\tau$ shows that when $\log_2|S| - t \approx  O(\log_2(\log_2(|S|^2/\delta)(\tau\delta)^{-2}))$, the maximal bucket size will be at most a factor of  $1+\tau$ larger than the trivial bound of $|S|/2^t$ with probability $1-\delta$ over the choice of the linear function. Hence, the results of \cite{LinearNoga} deal with the case of smaller entropy loss  ($\log_2(t)\approx \log_2(\log_2(|S|))$ instead of  $O(\log_2(\log_2(|S|^2/\delta)(\tau\delta)^{-2}))$ ) but are a multiplicative factor of $\log_2(t)/\delta\gg 1$ away from uniform instead of $\tau+1$ which can be made arbitrarily close to $1$ (by reducing $\tau$ and increasing the entropy loss).

We can also make comparisons in the regime of large $\tau$. As stated earlier for $\tau=1/\delta^2$ Theorem~\ref{thm-BetBinary} shows that the maximal bucket size will be at most a factor of $1+1/\delta^2$ larger than the trivial bound of $|S|/2^t$ with probability $1-\delta$ over the choice of the linear function. In this setting for $\delta\gg 1/\log_2(t)$, we lose a constant factor in the entropy loss ($\log_2\log_2|S|$ in \cite{LinearNoga} and $O(\log_2\log_2|S|)$ for our result) and gain in the bucket size bound ($\log_2(t)\delta$ times $|S|/2^t$ in \cite{LinearNoga} and $1+1/\delta^2$ times $|S|/2^t$ for our result). Although it should be noted that the results in \cite{LinearNoga} are incomparable in the sense that they compute the expected value of the bucket size while our results only give bounds on the bucket size with high probability.

\paragraph{Other families of universal hash functions:} In this section we look at whether our results can hold for other universal families of hash functions.

We first show that our results can not hold for all families of universal hash functions by means of an example. The family we will consider is linear maps from $\F_{q^2}^2$ to $\F_{q^2}$ which do form a universal family. We will show that that known results from \cite{LinearNoga} prove that this family needs at least an entropy loss of $\Omega(\log_2|S|)$ to get the distance guarantees of Theorem~\ref{thm-BetBinary}. This also shows that we need high dimensionality to get good linear hash function over large fields.

Theorem 8 of \cite{LinearNoga} proves that for any finite field $\F_{q^2}$ where $q$ is a prime power if we consider the set of linear maps from $\F_{q^2}^2$ to $\F_{q^2}$ then there exists a set $S_0$ of size $q^2$ such that for every linear map the maximal bucket size is at least $q$.

This implies that for any $S'_0$ of size $q^{2+\eta},\eta<1$ which contains $S_0$, every linear map $L:\F_{q^2}^2\to \F_{q^2}$ will have a maximal bucket size of at least $q$.  In other words $L(U_{S'_0})$ will be at least $1/q^{1+\eta}\gg C/q^2$ away from uniform in $\ell_\infty$ distance. Equivalently, even for an entropy loss of $\eta\log_2(q)=\Omega(\log_2|S'_0|)\gg O(\log_2\log_2|S|)$, linear maps from $L:\F_{q^2}^2\to \F_{q^2}$ do not guarantee that the image $L(U_{S'_0})$ will be $C/q^2$ close to uniform for any fixed constant $C$. This also means that we need at least an entropy loss of $\Omega(\log_2|S|)$ to get the distance guarantees of Theorem~\ref{thm-BetBinary}.

Other families of universal hash function could still achieve the guarantees of Theorem~\ref{thm-BetBinary}. In particular, for a prime $p$ consider the family of hash functions $h_{a,b}:\{0,1,\hdots,p-1\}\to\{0,\hdots,m-1\}$ for $a\in \{1,\hdots,p-1\},b\in\{0,\hdots,p-1\}$ defined as $h_{a,b}(x)= (ax+b \mod p) \mod m$. From \cite{CARTERWEGMANS}, we know that this family is universal. By following the framework in Section~\ref{sec-furstenberg}, it can be checked that proving $\ell_\infty$-guarantees for this family is a generalization of the notoriously difficult Arithmetic Kakeya problem~\cite{green2019arithmetic}.

\paragraph{The case of high min-entropy:} As was mentioned before, The LHL holds not just for `flat' distributions of the form $U_S$, but for any distribution with high min-entropy.  This more general version can be derived easily from the LHL for sets using a convex combination argument. As far as we can tell, this argument fails in the case of $\ell_\infty$ and so we cannot automatically derive a min-entropy analog of our results. While we do believe that our proof techniques could be made to handle this more general case (e.g., as is the case in  \cite{DDL-2}), we leave it for future work.

\section{Connection to prior work on Kakeya and Furstenberg sets}\label{sec-furstenberg}

In this section we will explain the connection between Theorem~\ref{thm-mainhash} and the finite field Kakeya-Furstenberg problem. Along the way we will introduce notations and definitions that will be used later on in the proofs. 

We will now describe an equivalent formulation of Theorem~\ref{thm-mainhash} in terms of the kernel of the linear map $L : \F_q^n \to \F_q^t$ appearing in the theorem. This will allow us to highlight its connection to the finite field Kakeya problem. To do so, we introduce some notations. For $1 \leq k \leq n$ we denote by $\cL_k(\F_q^n)$ the set of $k$-dimensional flats in $\F_q^n$ and by $\cL_k^*(\F_q^n)$ the set of $k$-dimensional subspaces (flats passing through the origin). Let $S \subset \F_q^n$ be a set. For $k \in [n]$, we denote by $$E_k(S) =  |S|/q^{n-k}$$ the expectation of $|R \cap S|$ with $R$ chosen uniformly in $\cL_k(\F_q^n)$. When $S$ is clear from the context we omit it and simply write $E_k$.
\begin{definition}\label{def-eps-balanced}
We say that $R\in \cL_k(\F_q^n)$ is $\tau$-balanced with respect to a set $S \subset \F_q^n$ if we have:
$$ \left| |R\cap S| - E_k(S) \right| \leq \tau \cdot E_k(S). $$ Otherwise, we say that $R$ is $\tau$-unbalanced with respect to $S$.
\end{definition}

\begin{definition}\label{def-shift-balanced}
We say that $A\in \cL_k^*(\F_q^n)$ is $\tau$-shift-balanced with respect to $S$  if, for all $a \in \F_q^n$, the flat $R = A+a$ is $\tau$-balanced with respect to $S$.
\end{definition}

Notice that if $A\in \cL_k^*(\F_q^n)$ is $\tau$-shift-balanced with respect to $S$ and $A'\in \cL_{k'}^*(\F_q^n)$ contains $A$ (with $k' > k$) then $A'$ is also $\tau$-shift-balanced with respect to $S$.

We will now express Theorem~\ref{thm-mainhash} using this new notation. Suppose $L : \F_q^n \to \F_q^t$ is an onto linear map  and let $A = \ker(L)$ be its $k=n-t$ dimensional kernel. Notice that, for each $y \in \F_q^t$, $$\prob[ L(U_S)=y ] = \frac{|(A+a)\cap S|}{|S|},$$ for some $a\in \F_q^n$ for which $L(a)=y$. Therefore, $$ | \prob[ L(U_S)=y ] - q^{-t}| \leq \tau q^{-t},$$ if and only if $A+a$ is $\tau$-balanced with respect to $S$. Hence, Theorem~\ref{thm-mainhash} is equivalent to the following theorem.

\begin{theorem}\label{thm-existsSB}
Let  $n \geq 5$ and let $S \subset \F_q^n$ be a set such that $|S| > q^4$.  Let $\tau > 0, \delta\in (0,1)$ be a real number such that $q \geq 32 \max(n(1+\tau)/(\tau\delta)^2,n)$. Let $4 \leq r \leq n-1$ be an integer such that $q^r < |S| \leq q^{r+1}$ and let $k =n - r + 3$.   Then a $(1-\delta)$-fraction of all subspaces in $\cL_k^*(\F_q^n)$ are $\tau$-shift-balanced with respect to $S$.
\end{theorem}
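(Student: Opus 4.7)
The plan is to split the collection of $\tau$-unbalanced subspaces into the over-dense part $B_+ = \{A \in \cL_k^*(\F_q^n) : \exists\, a \text{ with } |(A+a) \cap S| > (1+\tau) E_k\}$ and the under-dense part $B_- = \{A \in \cL_k^*(\F_q^n) : \exists\, a \text{ with } |(A+a) \cap S| < (1-\tau) E_k\}$. Every $\tau$-unbalanced subspace lies in $B_+ \cup B_-$, so it suffices to bound $|B_+|$ and $|B_-|$ each by $(\delta/2)\,|\cL_k^*(\F_q^n)|$ and apply a union bound. Note that the hypotheses $|S|>q^4$ and $q^r<|S|\le q^{r+1}$ with $k=n-r+3$ imply $E_k=|S|/q^{n-k}\in (q^3,q^4]$, so in both cases the ``witness'' flats will contain a significant number of $S$-points relative to $q$, which is the regime where the polynomial method produces nontrivial Furstenberg-type bounds.

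For $B_+$, the argument is essentially a Furstenberg set bound. For each $A \in B_+$ choose a witness shift $a_A$; the family $\{A+a_A : A \in B_+\}$ is a collection of $|B_+|$ many $k$-flats, one in each of $|B_+|$ distinct directions, with every flat containing more than $(1+\tau)E_k$ points of $S$. The improved Furstenberg lemma proved in Section~\ref{sec-furstenberg-proof} will then yield a lower bound on $|S|$ of the form (roughly) $|S| \gtrsim (1+\tau)E_k \cdot (|B_+|/N_k)\cdot q^{n-k}/n$, which, combined with $|S|\le q^{r+1}$ and the hypothesis $q \geq 32n(1+\tau)/(\tau\delta)^2$, forces $|B_+| \le (\delta/2)\,|\cL_k^*(\F_q^n)|$. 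This is the ``standard'' direction of the Kakeya-Furstenberg connection.

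For $B_-$, the naive polynomial-method arguments do not apply directly, because an under-dense flat carries few points of $S$ and therefore imposes no vanishing constraint on a low-degree polynomial that vanishes on $S$. The ``two-sided'' part of the paper's title refers precisely to the strategy for this case. My plan is to use the identity $\sum_a |(A+a)\cap S| = |S| = q^{n-k}E_k$: if $A+a$ is under-dense by at least $\tau E_k$, then the surplus must be absorbed by the remaining $q^{n-k}-1$ shifts of $A$, so either (a) some shift is itself over-dense by an appreciable amount (reducing to $B_+$ with slightly weaker parameters, where the $(1+\tau)/\tau^2$ slack in the hypothesis on $q$ exactly accommodates the loss), or (b) a positive fraction of shifts of $A$ simultaneously exceed $E_k$ by a smaller additive amount, which can be packaged into a multiplicity-style Furstenberg statement. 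Either way the improved Furstenberg lemma applies to bound $|B_-|$ by $(\delta/2)\,|\cL_k^*(\F_q^n)|$.

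The main obstacle is the treatment of $B_-$: translating ``too few intersections on a flat'' into a polynomial-method input. The likely route is to prove a strengthened Furstenberg bound whose conclusion controls not only the maximum but also the minimum intersection of $S$ with flats in many directions, proving both sides at once via a polynomial that interpolates the ``expected'' behavior rather than the ``vanishing'' behavior. Tuning the parameters so that the factor $(1+\tau)/\tau^2$ emerges naturally — rather than a worse dependence like $1/\tau^4$ — will be the quantitative crux, and is presumably the reason the lemma is factored out into its own section.
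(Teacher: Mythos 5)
There is a genuine gap, and it is already in the $B_+$ half, not just in the admittedly speculative $B_-$ half. Applying a Furstenberg bound directly to $S$ with richness $m=(1+\tau)E_k$ and direction-fraction $\beta=|B_+|/|\cL_k^*(\F_q^n)|$ cannot give what you want, for two reasons. First, even the \emph{best possible} bound of this type, $|K|\geq (1-\eps)\beta m q^{n-k}$ (tight, since a union of flats in a $\beta$-fraction of directions achieves it), would read $|S|\geq (1-\eps)\beta(1+\tau)|S|$ here, which is contradictory only when $\beta(1+\tau)>1$, i.e.\ only for $\tau$ of order $1/\delta$ or larger; for small $\tau$ no bound on $|B_+|$ of the form $(\delta/2)|\cL_k^*(\F_q^n)|$ can follow, so the ``standard direction of the Kakeya--Furstenberg connection'' is quantitatively too weak to prove the theorem. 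Second, the lemma actually proved in Section~\ref{sec-furstenberg-proof} has the shape $|K|\geq\beta\gamma^n q^n(1+1/q)^{-n}$ with a $\gamma^n$ (not $\gamma$) dependence; it is useful only when the richness is $(1-O(1/q))$ times the \emph{full} flat size, so that $\gamma^n$ is a constant under the hypothesis $q\gtrsim n(1+\tau)/(\tau\delta)^2$. In your application $\gamma=(1+\tau)E_k/q^k=(1+\tau)|S|/q^n\leq(1+\tau)q^{4-k}$, which is minuscule whenever $k\geq 5$, and the linear-in-$m$ bounds of the type you invoke (as in the prior Furstenberg-set work) are only known for $q$ exponential in $n$ — exactly the regime the theorem's polynomial-in-$n$ hypothesis on $q$ excludes. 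Your $B_-$ dichotomy also fails quantitatively: a deficit of $\tau E_k$ on one shift forces an average excess of only $\tau E_k/(q^{n-k}-1)$ on the others, far too small to feed any richness-based argument.

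The paper's proof avoids both problems by never applying the polynomial method to $S$ itself and by never splitting into over- and under-dense cases. Arguing by contradiction, one fixes a witness shift $f(A)$ for each bad direction $A$ and passes to $(k-2)$-flats: by Chebyshev and pairwise independence, (i) a uniformly random $(k-2)$-flat is $\sigma$-balanced except with probability $1/(\sigma^2 q)$, while (ii) inside any $\tau$-unbalanced $k$-flat $T$ — over-dense \emph{or} under-dense, since in either case the conditional expectation $|S\cap T|/q^2$ deviates from $E_{k-2}$ by at least $\tau E_{k-2}$ — a random $(k-2)$-flat is $\sigma$-unbalanced except with probability $(1+\tau)/((\tau-\sigma)^2q)$. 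One then selects a single direction $\hat W\in\cL_{k-2}^*(\F_q^n)$ good on both counts and sets $K$ to be the union of all $\sigma$-unbalanced $(k-2)$-flats parallel to $\hat W$. By (i) this $K$ is small (size $O(q^{n-1}/(\sigma^2\delta))$), while by (ii) for an $\Omega(\delta)$-fraction of $2$-dimensional directions $U$ the $2$-flat $f(\spanV\{U,\hat W\})+U$ meets $K$ in at least $(1-O((1+\tau)/((\tau-\sigma)^2q)))q^2$ points, because $f(A)+A$ decomposes into $q^2$ parallel copies of $\hat W$ of which almost all are unbalanced and the $2$-flat meets them in distinct points. Thus $K$ is a $(2,\gamma q^2,\Omega(\delta))$-Furstenberg set with $\gamma=1-O(1/n)$ after invoking the lower bound on $q$, and Lemma~\ref{lem-fur2-intro} contradicts the size bound on $K$. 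This is the sense in which the bound is ``two-sided'': unbalancedness of either sign is converted into near-total richness of a \emph{derived} small set, rather than into an interpolation statement about $S$.
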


The above statement can also be read as saying that for a dimension $k$ such that $q^k|S|>q^{n+3}$ then most $k$ dimensional subspaces are going to shift-balanced. We can improve this by requiring a larger field size. While this statement is not going to help improve our hashing result, we believe it could have other applications.

\begin{theorem}\label{thm-smoothSlice}
Let  $n \geq 5, \eta\in (0,1]$ and let $S \subset \F_q^n$ be a set such that $|S| > q^4$. Then there exists a constant $C_\eta>0$ depending only on $\eta$ such that for any $\tau > 0, \delta\in (0,1)$ satisfying $q^{\eta} \geq  C_\eta\max(n(1+\tau)/(\tau\delta)^2,n)$ and any integer $k$ satisfying $q^k|S| > q^{n+2+\eta}$ we have that a $(1-\delta)$-fraction of all subspaces in $\cL_k^*(\F_q^n)$ are $\tau$-shift-balanced with respect to $S$.
\end{theorem}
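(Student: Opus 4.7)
The plan is to mirror the contrapositive structure used to prove Theorem~\ref{thm-existsSB}, but to introduce a tuning parameter in the polynomial-method step that trades the dimension threshold $q^k|S|>q^{n+3}$ against a larger required field size. First I would suppose, toward a contradiction, that strictly more than a $\delta$-fraction of the subspaces $A \in \cL_k^*(\F_q^n)$ are $\tau$-unbalanced with respect to $S$; for each such $A$ I pick a witness shift $a_A$ so that the flat $R_A = A + a_A$ satisfies $\bigl||R_A \cap S| - E_k\bigr| > \tau E_k$. Because the $q^{n-k}$ shifts of a fixed $A$ partition $\F_q^n$ and together contain exactly $|S|$ points, any deficient shift with fewer than $(1-\tau)E_k$ points of $S$ is compensated by rich shifts containing more than roughly $(1+\tau)E_k$ points, via the same averaging reduction as in the proof of Theorem~\ref{thm-existsSB}. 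Thus I may assume that a constant fraction of the bad $A$'s admit a genuinely rich witness flat $R_A$ with $|R_A \cap S| > (1+\tau)E_k$.

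The core of the argument is then a smoothed Furstenberg-set bound, refining the one developed in Section~\ref{sec-furstenberg-proof}. The baseline polynomial-method argument picks a nonzero polynomial $P$ vanishing on $S$ of degree $d$ slightly below $q$, observes that Schwartz--Zippel on each rich flat $R_A$ forces $P$ to vanish identically on $R_A$ (because $|R_A \cap S|$ comfortably exceeds the $d/q$-threshold along $R_A$), and then bounds the number of admissible directions $A$ by an expression of the form $d^k/(q-d)^k$ up to lower-order error. Balancing against $|S|$ yields the $q^k|S|>q^{n+3}$ threshold that drives Theorem~\ref{thm-existsSB}. To sharpen the threshold to $q^k|S| > q^{n+2+\eta}$ I would push the polynomial degree up to $d = q - \Theta(q^{\eta})$, or equivalently introduce a multiplicity parameter of order $1/\eta$ in the derivative-enhanced polynomial method of \cite{DKSS13, DDL-2}. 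The resulting error terms scale like powers of $q^{-\eta}$ rather than $q^{-1}$, and the hypothesis $q^\eta \geq C_\eta \max(n(1+\tau)/(\tau\delta)^2,n)$ is precisely what is needed for those errors to be dominated by the main term; the constant $C_\eta$ absorbs the combinatorial overhead coming from the multiplicity parameter and the binomial coefficients in the polynomial-method count.

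The main obstacle, and where most of the work lies, is carrying out this smoothed Furstenberg estimate cleanly: tracking the degree $d$, the multiplicity $m$, and the gap $q - d \approx q^\eta$ simultaneously, and verifying that the polynomial still exists (a dimension count against $|S|$ with multiplicities), still vanishes identically on every rich flat (a multiplicity-sensitive Schwartz--Zippel), and still yields a nontrivial upper bound on the number of admissible directions after summing the error contributions across scales. Once that lemma is in hand, comparing its upper bound on the number of rich directions against the lower bound $\Omega(\delta \cdot |\cL_k^*(\F_q^n)|)$ extracted in the first step produces the desired contradiction and completes the proof.
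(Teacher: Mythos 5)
There is a genuine gap, and it appears already in your first reduction. You claim that if some shift of $A$ is deficient, $|R\cap S|<(1-\tau)E_k$, then averaging over the $q^{n-k}$ shifts of $A$ produces a shift with more than roughly $(1+\tau)E_k$ points, so that you may pass to one-sided ($m$-rich) witnesses. This is false: the shifts of $A$ partition $\F_q^n$ and sum to $|S|$, so a single deficient shift only forces some other shift to exceed the mean by about $\tau E_k/(q^{n-k}-1)$, nowhere near $\tau E_k$. Since the statement to be proved is two-sided ($\tau$-shift-balanced means \emph{every} shift is within $(1\pm\tau)E_k$), the deficient case cannot be folded into the rich case, and a polynomial vanishing on $S$ cannot detect deficient flats at all. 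The paper handles two-sidedness differently: after concentration and anti-concentration estimates for $(k-2)$-flats (Claims~\ref{cla-Bsmall} and \ref{cla-unbalancedflat}), it fixes one good direction $\hat W$ (Claim~\ref{cla-existsW}) and builds the auxiliary set $K$ as the union of all $\sigma$-unbalanced $(k-2)$-flats parallel to $\hat W$ (Claim~\ref{cla-Kakeyaset}); the Furstenberg richness of the resulting $2$-flats is measured against $K$, not against $S$, so deficient and overfull flats enter symmetrically. Your sketch contains no analogue of this dimension reduction to $2$-flats, which is what makes the known strong bound (Lemma~\ref{lem-fur2-intro}, with density $\gamma$ close to $1$) applicable; a direct Schwartz--Zippel argument on $k$-flats that are merely $(1+\tau)E_k$-rich would not force identical vanishing anyway, since $(1+\tau)E_k$ can be a vanishing fraction of $q^k$ in the regime $q^k|S|\approx q^{n+2+\eta}$.

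Relatedly, you have misidentified where $\eta$ enters. In the paper the polynomial-method ingredient is \emph{unchanged} between Theorem~\ref{thm-existsSB} and Theorem~\ref{thm-smoothSlice}: Lemma~\ref{lem-fur2-intro} is used verbatim, with no retuning of degree or multiplicity. The improvement from the threshold $q^k|S|>q^{n+3}$ to $q^k|S|>q^{n+2+\eta}$ comes entirely from the Chebyshev steps: the hypothesis gives $E_{k-2}=|S|/q^{n-k+2}\ge q^{\eta}$, so the failure probabilities in the concentration and anti-concentration claims become $O\bigl(1/(\sigma^2 q^{\eta})\bigr)$ and $O\bigl((1+\tau)/((\tau-\sigma)^2 q^{\eta})\bigr)$, the set $K$ has size $O\bigl(q^{n-\eta}/(\sigma^2\delta)\bigr)$, and it is $\bigl(2,(1-O(1/q^{\eta}))q^2,\Omega(\delta)\bigr)$-Furstenberg; the condition $q^{\eta}\ge C_\eta\max(n(1+\tau)/(\tau\delta)^2,n)$ is exactly what makes this contradict Lemma~\ref{lem-fur2-intro}. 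Your proposed ``smoothed'' polynomial method with degree $d=q-\Theta(q^{\eta})$ and multiplicity of order $1/\eta$ is therefore aimed at the wrong place, and it rests on a description of the baseline proof of Theorem~\ref{thm-existsSB} (a direct $d^k/(q-d)^k$ count of rich directions for a polynomial vanishing on $S$) that is not the argument the theorem actually relies on and that cannot, on its own, yield a two-sided balancedness statement.
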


We see that $\eta$ can be made arbitrarily small, as long as the field is large enough.

We now take a moment to explain the expression `two-sided Kakeya bounds' from the title and the connection to prior work on Kakeya sets. A Kakeya set in $\F_q^n$ is a set containing a line in each direction. The main question, asked by Wolff in \cite{Wolff99}, is to lower bound the size of such sets. This question has now been completely resolved in the series of papers \cite{Dvir08, DKSS13, BukhChao21}. We will be mostly interested in the high dimensional variants of this problem, asking about sets containing $k$-dimensional flats in all directions, or more generally, sets that have large intersection with a flat in each direction (these are called Furstenberg sets). These type of questions have been also studied extensively, with tight bounds obtained in some cases \cite{EOT10,KLSS2011, EE16, DDL-1,DDL-2}.

 We start by recalling some definitions from that domain. 
 
\begin{definition}[$m$-rich flats]
We call a flat $R \in \cL_k(\F_q^n)$ $m$-rich with respect to a set $S \subset \F_q^n$ if $|R \cap S| \geq m$. 
\end{definition}

\begin{definition}[$(k,m,\beta)$-Furstenberg sets]
We call a set $K \subset \F_q^n$ a $(k,m,\beta)$-Furstenberg set if $K$ has an $m$-rich $k$-flat for at least a $\beta$ fraction of directions. That is, for at least a $\beta$-fraction of all $A \in \cL_k^*(\F_q^n)$ there exists $a \in \F_q^n$ so that $a + A$ is $m$-rich with respect to $K$. 
\end{definition} 

$(k,q^k,1)$-Furstenberg sets are also called Kakeya sets.
Prior works on Kakeya/Furstenberg sets were focused on giving {\em lower bounds} on the size of $(k,m, 1)$-Furstenberg sets. For example, in \cite{DDL-2}, it was shown that, if $S$ is a $(k,m,1)$-Furstenberg set then $|S| > (1-\eps)mq^{n-k}$, assuming $q$ is sufficiently large as a function of $n$ and $\eps$ (in particular, $q$ has to be exponential in $n$). Notice that this is the best possible since any set of size $mq^{n-k}$ is $(k,m,1)$-Furstenberg. Stated in the counter-positive direction, this theorem shows that: If 
 \begin{equation}\label{eq-sizeS}
 	|S| \leq (1-\eps)mq^{n-k}
 \end{equation}
then there exists a $k$-dimensional subspace $R$ such that all shifts of $R$ have less than $m$-points in common with $S$.  Notice that (\ref{eq-sizeS}) gives us that $$E_k(S) \leq (1-\eps)m.$$ So, what we discover is that, the results in \cite{DDL-2} simply say that, for every $S$, there is a subspace $R$ such that all shifts of $R$ have intersection with $S$ that is not much larger from the expectation $E_k$. Hence, Theorem~\ref{thm-existsSB} can be viewed as a two-sided generalization of this statement by showing that, in fact, there exists $R$ such that all shifts of $R$ have roughly the expected intersection with $S$.

\remove{
To further elucidate this connection we give a direct corollary of our main theorem which is similar to and a generalization of the main result of \cite{DDL-2} (the result there is only proved for $\beta=1$). While our result works for smaller $q$ (polynomial in $n$ instead of exponential) it requires slightly larger dimension $k \geq 5$ instead of $k \geq 2$.\footnote{We note that all of these results will be false if we take $k=1$ as there are examples of one dimensional Kakeya  sets with vanishing density $2^{-n}$ \cite{DKSS13}.}

\begin{corollary}\label{cor-furPartial}
	Suppose $\eps < 1/2,\beta\in (0,1)$, $q \geq 32 \max(n/(\eps\beta)^{2},n)$ and let $k \geq 5$ and $m > 2 q^4$.  Let $K \subset \F_q^n$ be a $(k,m, \beta)$-Furstenberg set. Then
	$$ |K| \geq (1 - \eps)mq^{n-k}. $$
\end{corollary}
\begin{proof}
Suppose towards a contradiction that 
$ |K| < (1 - \eps)mq^{n-k}. $  W.l.o.g we can  add points to $K$ so that
$ |K| \geq (1 - \eps)mq^{n-k} - 1. $
Let $r$ be such that $q^r < |K| \leq q^{r+1}$ and let $k' = n-r+3$. By the lower bound on $|K|$ and using $m \geq 2q^4$ we get that $k' \leq k$. Applying Theorem~\ref{thm-existsSB} for $\beta$ (notice $r \geq 4$ by our bound on $m$) we get that at least $(1-\beta)$-fraction of all $k'$-dimensional spaces $A' \in \cL_{k'}^*(\F_q^n)$ are $\eps$-shift-balanced with respect to $K$. Since $k' \leq k$ at least a $(1-\beta)$-fraction of all $k$-dimensional $A \in \cL_{k}^*(\F_q^n)$ are $\eps$-shift-balanced with respect to $K$. This is a contradiction since we assume that for at least a $\beta$ fraction of all $A\in \cL^*_k(\F^n_q)$ there exists an $m$-rich shift $a+ A$ but, since $E_k(K) < (1-\eps)m$ we must have that $a+A$ is $\eps$-unbalanced. This implies that at least $\beta$ fraction of $\cL^*_k(\F^n_q)$ are not $\eps$-shift balanced. This gives us a contradiction.
\end{proof}

To remove the dependence of $q$ on $\beta$ we use a random rotation argument.
\begin{corollary}\label{cor:fullFur}
Suppose $\eps < 1/2,\beta\in (0,1]$, $q \geq 32 \max(16n/\eps^{2},n)$ and let $k \geq 5$ and $m > 2 q^4$.  Let $K \subset \F_q^n$ be a $(k,m, \beta)$-Furstenberg set. Then
$$ |K| \geq \min(2\delta,1)(1 - \eps)mq^{n-k}. $$
\end{corollary}
\begin{proof}
For $\beta\ge 1/4$ the statement follows from Corollary \ref{cor-furPartial}.

For $\beta<1/4$. Let $w$ be the smallest natural number such that $w\beta \ge 1/2$. Consider $w$ random matrices $M_1,\hdots,M_w$ from $\text{GL}_n(\F^n_q)$. Let
$$K'=\bigcup_{t=1}^w M_t\cdot K=\bigcup_{t=1}^w \{M_tx| x\in K\}.$$
By construction $|K'|\le w|K|$. As $\delta$ fraction of all $A\in \cL^*_k(\F^n_q)$ have $m$-rich shifts with respect to $K$ and $\text{GL}_n(\F^n_q)$ acts transitively on $A\in \cL^*_k(\F^n_q)$, we have that any given $A\in \cL^*_k(\F^n_q)$ has an $m$-rich shift with respect to $K'$ with probability at least $1-(1-\beta)^w\ge 1-(1-1/2w)^w \ge 1/4$. Therefore, on expectation at least a $1/4$ fraction of flats in $\cL^*_k(\F^n_q)$ will have $m$-rich shifts. Thus we can find explicit $M_1,\hdots,M_w$ such that $|K'|\le w|K|$ and $K'$ is $(k,m,1/4)$-Furstenberg. We now apply Corollary \ref{cor-furPartial} and use $w\beta \ge 1/2$ to complete the proof.
\end{proof}

}

\section{Proof Overview}\label{sec-overview}
We now give a short sketch of the proof of  Theorem~\ref{thm-mainhash}.  The proof of Theorem~\ref{thm-imphash} (the case of $\tau > 1$)  is essentially the same as the proof of Theorem~\ref{thm-mainhash} with a different setting of a single parameter and so we will not discuss it here. We will also not discuss the two theorems dealing with the case of $\F_2$ as they will follow from the large field case by a simple encoding argument.

As discussed in Section~\ref{sec-furstenberg}, Theorem~\ref{thm-mainhash} is equivalent to Theorem~\ref{thm-existsSB} which is stated in the language of shift-balanced sub-spaces.  Given a set $S \subset \F_q^n$, the theorem claims that there are many sub-spaces $A\in \cL_k^*(\F_q^n)$ that are $\tau$-shift-balanced. Let us instead try and prove the easier claim that {\em there exists} at least one such subspace. We will prove this by contradiction. Suppose there are no $\tau$-shift balanced sub-spaces $A$. Then, for each $A\in \cL_k^*(\F_q^n)$ we can find a shift $f(A) \in \F_q^n$ so that the flat $T_A = f(A) + A$ is $\tau$-unbalanced.

At a very high level, the contradiction will follow by combining the following three statements:
\begin{itemize}
    \item (Concentration Statement) A random $k-2$ flat is  $\tau/2$-balanced with high probability.
    \item (Anti concentration statement) If $T$ is a $\tau$-unbalanced $k$-flat and $R$ is a randomly chosen $(k-2)$-flat in $T$ then $R$ is $\tau/2$-unbalanced with high probability.
    \item (Kakeya statement) Given a collection of $k$-flats, $T_A$, one in each direction $A \in \cL_k^*(\F_q^n)$. A randomly chosen $(k-2)$-flat in a randomly chosen $T_A$ `behaves like' a truly random  $(k-2)$-flat.
\end{itemize}

Before we discuss the proofs of these statements, let us see how they can be combined to derive a contradiction. Consider the distribution on $(k-2)$-flats obtained by sampling $A \in \cL_k^*(\F_q^n)$ uniformly at random and then choosing a random $(k-2)$-flat $R$ inside $f(A) + A$, where $f(A)$ is defined above so that $f(A)+A $ is $\tau$-unbalanced.  By the anti-concentration statement, this distribution outputs a $\tau/2$-unbalanced $R$ with high probability. Now, from the Kakeya statement we get that this should (in some way) also be the behaviour of a truly random $(k-2)$-flat, contradicting the concentration statement. This is essentially the structure of the proof, with the `behaves like' portion of the Kakeya statement replaced by a quantitative bound on the probability of landing in a given small set (the set of unbalanced $(k-2)$ flats).

Let us now discuss the proofs of the three statements. The first two  (concentration and anti-concentration), follow easily from Chebyshev's inequality and pair-wise independence and so we will only be concerned with the proof of the third one.  We can generalize the Kekeya statement as follows, given a collection of $k$-flats $T_A$, one in each direction $A$, what can be said about the distribution of a random $r$-flat $R$ in a random $T_A$ where we allow $r$ to be in the range $\{0,1,\ldots,k\}$. To recover the original (one dimensional) Kakeya problem all we have to do is set $k=1$ and $r=0$. Now, we are asking about the distribution of a random point $R$ on a  line~$T_A$ chosen so that its direction is uniformly random and its shift is arbitrary. The finite field Kakeya conjecture (proved in \cite{Dvir08}) says that  the distribution of $R$ has large support. In \cite{DW08,DKSS13}, motivated by applications to extractors, it was shown that, in fact, the distribution of $R$ has high min-entropy. These results can be easily `lifted up' to the case where $k>1$ and $r=k-1$ but, alas, the known (and tight) quantitative bounds on the min entropy are not sufficient for our purposes. Specifically, it is possible for the distribution of $R$ in this case to be contained in a set of density $2^{-n}$ inside $\F_q^n$, which is much too small for our purposes. This motivates us to take $r=k-2$, which reduces to understanding the case of $k=2$ and $r=0$. That is, given a family of $2$-flats $T_A$, one in each direction, what can be said about the behaviour of a random point $R$ on a random $T_A$?  Luckily, in this case, the results of \cite{KLSS2011, DDL-2} can be used to show that the distribution of $R$ has support with density approaching one.

To prove our theorem we need to extend the results of \cite{KLSS2011, DDL-2} in several ways, including going from support size to min entropy, reducing the field size from exponential to polynomial and handling the case of `many' directions instead of `all' (which corresponds to the parameter~$\delta$ being less than one). The required lemma is stated below and proved in Section~\ref{sec-furstenberg-proof}.

\RestateInit{\restatefur}
\begin{restatable}[Furstenberg lemma]{lemma}{lemfurii}
  \label{lem-fur2-intro}\RestateRemark{\restatefur}
For any $\gamma,\beta \in [0,1], n\in \N$, $q$ a prime power every $(2,\gamma q^2,\beta)$-Furstenberg set $K\subseteq \F_q^n$ has size at least,
$$|K|\ge \beta\gamma^n q^n\left(1+\frac{1}{q}\right)^{-n}.$$
\end{restatable}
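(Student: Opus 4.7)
The plan is to apply the polynomial method with multiplicities, in the spirit of the tight Kakeya/Furstenberg bounds of \cite{DDL-2}. Suppose for contradiction that $|K| < \beta\gamma^n q^n(1+1/q)^{-n}$. Choose integer parameters $m$ (multiplicity) and $d$ (degree) with $m = (n-1)q$ and $d = \lfloor m\gamma q^2/(q+1) \rfloor$, so that $d < m\gamma q$. A direct term-by-term estimate shows that each factor of the binomial ratio $\binom{n+d}{n}/\binom{n+m-1}{n} = \prod_{i=1}^n (d+i)/(m+i-1)$ is at least $\gamma q/(1+1/q)$ at this choice, so the ratio itself is at least $\gamma^n q^n(1+1/q)^{-n}$. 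Since $|K|$ is strictly smaller (using $\beta \leq 1$), a standard dimension count in the method of multiplicities produces a nonzero $P \in \F_q[x_1,\ldots,x_n]$ of degree $\leq d$ vanishing with multiplicity $\geq m$ at every point of $K$.

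Next, for each $A$ in the rich direction set $D \subseteq \cL_2^*(\F_q^n)$ (with $|D| \geq \beta|\cL_2^*|$), let $T_A = a_A + A$ be the guaranteed $\gamma q^2$-rich two-flat. The restriction $P|_{T_A}$ is a bivariate polynomial of degree $\leq d$ vanishing with multiplicity $\geq m$ at $\geq \gamma q^2$ points. By the bivariate Schwartz--Zippel bound with multiplicities (total multiplicity $\leq dq$), and since $m\gamma q^2 > dq$ by construction, $P|_{T_A}$ must vanish identically. Writing $P = \sum_i P_i$ by homogeneous degree and expanding $P(a_A+tv)$ as a polynomial in $t \in \F_q$ for $v \in A$, extraction of the leading $t^{d^*}$-coefficient (where $d^* = \deg P$) shows $P^*(v)=0$ for $v \in A$, with $P^*$ the top-degree homogeneous part. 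The same argument applied to the partial derivatives $\partial^\alpha P$ for $|\alpha| < m$ -- which also vanish on $T_A$ by the multiplicity-$m$ condition on $P$ over $K$ -- yields that $P^*$ vanishes on each $A \in D$ with multiplicity $\geq m$.

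To derive the contradiction, set $V = \bigcup_{A \in D} A \subseteq \F_q^n$. A double-counting of incidences $(v, A)$ with $v \in A\setminus\{0\}$ and $A \in D$, using that each nonzero vector lies in exactly $\binom{n-1}{1}_q$ two-dimensional linear subspaces, gives $|V \setminus \{0\}| \geq |D|(q^2-1)/\binom{n-1}{1}_q = \beta(q^n-1)$. Applying Schwartz--Zippel with multiplicities to $P^*$ on $\F_q^n$ yields $m|V| \leq (\deg P^*)\,q^{n-1}$, so $\deg P^* \geq m\beta q$. Combined with $\deg P^* \leq d < m\gamma q$ and the parameter choice matched to the binomial product, the assumed upper bound on $|K|$ becomes untenable, yielding $|K| \geq \beta\gamma^n q^n (1+1/q)^{-n}$.

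The main obstacle is calibrating the pair $(d,m)$ so that the binomial ratio in the dimension count aligns precisely with the target $\gamma^n q^n(1+1/q)^{-n}$, while the Schwartz--Zippel lower bound on $\deg P^*$ contributes the extra linear $\beta$ factor (rather than a polynomial $\beta^n$ factor). The delicacy lies in splitting the combined dependence $\beta \cdot \gamma^n$ -- linear in one parameter, polynomial in the other -- across the two halves of the argument. In regimes where $\beta$ is very small compared to $\gamma^{n/(n-1)}$, the direct polynomial method alone yields only a $\beta^n$ bound; to bridge the gap one likely needs an auxiliary averaging/rotation argument exploiting the transitive action of $\mathrm{GL}_n(\F_q)$ on $\cL_2^*(\F_q^n)$ to boost $\beta$ toward a constant before invoking the polynomial method, with the resulting constant loss absorbed into the $(1+1/q)^{-n}$ factor available when $q \gtrsim n$.
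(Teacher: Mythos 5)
There is a genuine gap, and it is exactly at the point your proposal is supposed to deliver: the way $\beta$ enters. In your argument $\beta$ only appears at the very end, via the incidence count $|V|\ge \beta(q^n-1)$ and the multiplicity Schwartz--Zippel bound, which give $\deg P^* \ge \beta m q$ (roughly); the factor $\gamma^n q^n(1+1/q)^{-n}$ is supposed to come from the dimension count, which requires taking $d$ as large as about $\gamma m q$ (and the per-flat vanishing step independently requires $d<\gamma m q$). But the contradiction you need is $\deg P^* > d$, i.e.\ $\beta m q > d$, which is incompatible with $d\approx \gamma m q$ whenever $\beta<\gamma$. So for small $\beta$ your scheme only closes if you shrink $d$ to about $\min(\beta,\gamma) m q$, and then the dimension count yields only a $\min(\beta,\gamma)^n q^n$-type bound --- the very $\beta^n$ loss the lemma is designed to avoid, as you concede in your last paragraph. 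The fallback you suggest (random rotations to boost $\beta$ to a constant) is the route of \cite{ORW22}, but it provably loses constants (the paper quotes $(1+2/q)^{-n}$, and worse for $k=1$), and the lemma here is stated for \emph{all} $q$ and $n$ with the exact constant $(1+1/q)^{-n}$, so there is no ``$q\ge n$'' slack to absorb that loss. The paper's proof instead injects $\beta$ \emph{before} the dimension count: Lemma 6.6/Corollary~\ref{cor:goodMono} show, using the transitive $\mathrm{GL}_n(\F_q)$-action and an averaging argument over a fixed set of independent rows of $\Coeff(\EVAL^m(\cV_{\text{full}},W_{d,n}))$ over $\F_q(t_1,t_2)$, that one can select $\beta\binom{d+n}{n}$ monomials no nonzero $\F_q$-combination of which vanishes to multiplicity $t\ell$ on the rich direction set $\cF'\subset\cV_{\text{full}}$. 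The auxiliary polynomial is then constructed in the span of only those monomials, so the dimension count is $|K|<\beta\binom{d+n}{n}/\binom{m+n-1}{n}$ (linear in $\beta$), and the contradiction is with the defining property of the chosen monomial set --- not with a degree bound on $P^*$ over $\F_q^n$.

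Two secondary points would also need repair even in the regime where your scheme closes. First, the claim that $P^*$ vanishes with multiplicity $\ge m$ on each $A\in D$ is too strong: applying your restriction argument to $\partial^\alpha P$ requires $(m-|\alpha|)\gamma q^2 > (d-|\alpha|)q$, which fails long before $|\alpha|$ reaches $m$ since $d>m$; the paper's calibration $m=(q^2+t-1)\ell$, $d=q^2t\ell-1$ yields vanishing of $Q^H$ only to multiplicity $t\ell\ll m$, and the constants are then cleaned up by letting $\ell\to\infty$. Second, with the finite choice $m=(n-1)q$, $d=\lfloor m\gamma q^2/(q+1)\rfloor$, the per-factor estimate $(d+i)/(m+i-1)\ge \gamma q/(1+1/q)$ does not hold for all $i\le n$ (the ratio decreases in $i$ when $d>m-1$), so the claimed lower bound on the binomial ratio needs either a limit as in the paper or a different finite calibration.
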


We note that this lemma has been proven in \cite{ORW22} with a slightly worse lower bound of $\beta\gamma^n q^n\left(1+\frac{2}{q}\right)^{-n}$. This is enough to prove Theorem~\ref{thm-mainhash} leading to slightly worse constants in the field size requirement and hence the entropy loss of the theorem. The proof in \cite{ORW22} uses a combinatorial reduction to reduce the case of arbitrary $\beta$ to constant $\beta$. We give a new argument to prove this lemma directly.

Our proof of this lemma follows along the lines of prior works in this area and uses the polynomial method. One important ingredient is a new variant of the celebrated Schwartz-Zippel lemma which allows us to improve the dependence on $\beta$ above from $\beta^n$ to just $\beta$ (See Corollary~\ref{cor:goodMono}). We believe this lemma could have  applications in other situations where the polynomial method is used. For instance in a later work~\cite{dhar2022maximal} extensions of these arguments are used to prove maximal Kakeya bounds in the general setting of the integers modulo a composite number.

\section{Proof of Theorems~\ref{thm-mainhash} and \ref{thm-imphash}}\label{sec-proofs}
We prove Theorems~\ref{thm-mainhash} and \ref{thm-imphash} by contradiction. We will prove the equivalent versions of the theorems stated using $\tau$-shift-balanced subspaces (Theorem~\ref{thm-existsSB} and similarly for Theorem~\ref{thm-imphash} even though it was not stated separately). The proof of Theorem~\ref{thm-smoothSlice} is nearly identical, we give the modifications at the end of this section. 

\begin{proof}
Suppose the Theorems are not true. Then there exists a function with parameters as in the Theorems:
$$f:\cL^*_k(\F^n_q)\rightarrow \F_q^n $$
such that, for a $\delta$ fraction of $A\in \cL^*_k(\F^n_q)$, the flat $f(A)+A$ is $\tau$-unbalanced with respect to $S$. Notice that $f(A)$ can be taken to be any point on the flat $f(A)+A$ (the choice doesn't matter for this proof). 

For a real number $\sigma > 0$, let $$B^\sigma_{k-2}\subset \cL_{k-2}(\F^n_q)$$ denote the set of $(k-2)$-flats that are $\sigma$-unbalanced with respect to $S$. We will eventually set $\sigma$ to one of two values: To prove Theorem~\ref{thm-mainhash} we will set $\sigma = \tau/2$ and, to prove Theorem~\ref{thm-imphash} (when $\tau>1$) we will set $\sigma = \sqrt{\tau}$. Notice that, in both cases, we have $\tau - \sigma > 0$.

For a $k$-flat $T\in \cL_k(\F^n_q)$ we let $\cL_{k-2}(T)$ be the set of $(k-2)$-flats contained in $T$ and  let $$B^\sigma_{k-2}(T)=B^\sigma_{k-2}\cap \cL_{k-2}(T)$$ denote the set of $\sigma$-unbalanced $(k-2)$-flats with respect to $S$ that are contained in $T$.

Notice first that, by our assumption on $r$, we have
\begin{equation}\label{eq-boundk}
	 4 \leq k \leq n-1
\end{equation}

Throughout, we use $\V(X)$ to refer to the variance of a random variable $X$.

Our first claim shows that a random $(k-2)$ flat is balanced with high probability This gives the `concentration' part of the argument laid out in the proof overview.
\begin{claim}\label{cla-Bsmall}
If $R$ is chosen uniformly in $\cL_{k-2}(\F_q^n)$ then 
\[ \prob[ R \in B^\sigma_{k-2} ] \leq \frac{1}{\sigma^2 q}. \]
\end{claim}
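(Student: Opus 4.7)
The plan is to bound $\prob[R \in B^\sigma_{k-2}]$ by a straightforward application of Chebyshev's inequality to the random variable $X = |R \cap S|$, where $R$ is chosen uniformly at random from $\cL_{k-2}(\F_q^n)$. By definition, $R \in B^\sigma_{k-2}$ precisely when $|X - \E[X]| > \sigma \E[X]$, so Chebyshev gives $\prob[R \in B^\sigma_{k-2}] \leq \V(X)/(\sigma \E[X])^2$. It therefore suffices to compute $\E[X]$ exactly and to obtain a clean upper bound on $\V(X)$.

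The first step is to write $X = \sum_{x \in S} \mathbf{1}[x \in R]$ and apply linearity. By translation symmetry, for any fixed $x \in \F_q^n$ the probability $\prob[x \in R]$ depends only on the size $q^{k-2}$ of a $(k-2)$-flat versus the ambient size $q^n$, giving $\prob[x \in R] = 1/q^{n-k+2}$. Thus $\E[X] = |S|/q^{n-k+2} = E_{k-2}(S)$. For the variance, I would show that the indicators $\mathbf{1}[x \in R]$ and $\mathbf{1}[y \in R]$ for distinct $x,y \in S$ are in fact (weakly) negatively correlated: one computes that $\prob[x,y \in R] = \frac{q^{k-2}-1}{q^n-1}\cdot \frac{1}{q^{n-k+2}}$, whose ratio with $\prob[x\in R]\prob[y\in R]$ equals $(q^n - q^{n-k+2})/(q^n - 1) \leq 1$. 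Consequently only the diagonal terms contribute positively and $\V(X) \leq \sum_{x \in S} \prob[x \in R] = E_{k-2}(S)$.

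Combining these with Chebyshev yields
\[
\prob[R \in B^\sigma_{k-2}] \;\leq\; \frac{\V(X)}{(\sigma \E[X])^2} \;\leq\; \frac{1}{\sigma^2 E_{k-2}(S)}.
\]
Finally, I would translate the hypothesis into a lower bound on $E_{k-2}(S)$. Since the proof set $k = n - r + 3$ and the theorem assumes $|S| > q^r$, we have $n - k + 2 = r - 1$, so $E_{k-2}(S) = |S|/q^{r-1} > q$. Plugging this in gives $\prob[R \in B^\sigma_{k-2}] \leq 1/(\sigma^2 q)$ as required.

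There is no real obstacle here; this is the "concentration" step referenced in the proof overview, and the only mildly subtle point is verifying the negative correlation of indicators under the uniform measure on affine flats. One could alternatively absorb the off-diagonal terms into the bound $\V(X) \leq E_{k-2}(S)$ by appealing to any standard pairwise-independence computation for uniform flats, so even this step is essentially bookkeeping.
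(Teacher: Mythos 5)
Your proposal is correct and follows essentially the same route as the paper: Chebyshev applied to $|R\cap S|$, a second-moment bound $\V(|R\cap S|)\le E_{k-2}$, and the observation that $E_{k-2}=|S|/q^{r-1}>q$ for $k=n-r+3$. The only cosmetic difference is that you verify the (slight) negative correlation of the indicators explicitly, where the paper simply invokes pairwise independence; both give the same variance bound.
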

\begin{proof}
	Since $k \geq 3$ we can use pairwise independence and Chebyshev. The probability that $|R\cap S|$ deviates from its expectation $E_{k-2}$ by at least $\sigma E_{k-2}$ is at most $$\frac{\V(|R \cap S|)}{(\sigma E_{k-2})^2} \leq \frac{1}{\sigma^2 E_{k-2}} \leq \frac{1}{\sigma^2 q},$$ where we use the fact that $E_{k-2} = |S|/q^{n-k+2} \geq q$ for $k=n-r+3$.
\end{proof}

The next claim gives the `anti concentration' part of the proof overview, showing that a random $(k-2)$-flat in an unbalanced $k$-flat is unbalanced with high probability
\begin{claim}\label{cla-unbalancedflat}
Let $T \in \cL_k(\F_q^n)$ be $\tau$-unbalanced with respect to $S$. Suppose $R$ is chosen  uniformly at random 	from $\cL_{k-2}(T)$. Then
\[ \prob[ R \in B^\sigma_{k-2}(T) ] \geq 1 - \frac{1+\tau}{(\tau-\sigma)^2  q}. \]
\end{claim}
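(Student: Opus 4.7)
The plan is to mimic the Chebyshev argument of Claim~\ref{cla-Bsmall}, but localized to the $k$-flat $T$. First I would observe that for any point $p \in T$, the probability that a uniformly random $(k-2)$-flat of $T$ contains $p$ equals $q^{k-2}/q^k = 1/q^2$, so $\E[|R \cap S|] = |T \cap S|/q^2$. Setting $\mu := |T \cap S|/(q^2 E_{k-2})$, the $\tau$-unbalanced hypothesis on $T$ becomes $|\mu - 1| > \tau$, and $\E[|R \cap S|] = \mu E_{k-2}$.

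Next I would control the variance. Writing $|R \cap S| = \sum_{p \in T \cap S} \mathbf{1}[p \in R]$ and using a short count of the $(k-2)$-flats of $T$ that contain two prescribed distinct points (the fraction comes out to $(q^{k-2}-1)/((q^k-1)q^2)$, which is strictly less than $1/q^4 = \prob[p \in R]\prob[p' \in R]$), one verifies that the indicators are pairwise negatively correlated. Combined with the individual bound $\V(\mathbf{1}[p \in R]) \leq 1/q^2$, this gives the clean estimate $\V(|R \cap S|) \leq \E[|R \cap S|] = \mu E_{k-2}$.

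With these in hand I would reduce the event that $R$ is $\sigma$-balanced to a deviation from the mean: if $\bigl||R \cap S| - E_{k-2}\bigr| \leq \sigma E_{k-2}$ then the triangle inequality forces $\bigl||R \cap S| - \E[|R \cap S|]\bigr| \geq (|\mu - 1| - \sigma) E_{k-2}$. Chebyshev, together with the variance bound and the fact that $E_{k-2} > q$ (which follows from $|S| > q^r$ and $k = n - r + 3$), then yields
\[
\prob[R \text{ is } \sigma\text{-balanced}] \;\leq\; \frac{\mu E_{k-2}}{(|\mu-1|-\sigma)^2 E_{k-2}^2} \;\leq\; \frac{\mu}{(|\mu-1|-\sigma)^2 \, q}.
\]

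The main piece of actual work is then the elementary inequality
\[
\frac{\mu}{(|\mu-1|-\sigma)^2} \;\leq\; \frac{1+\tau}{(\tau-\sigma)^2} \qquad \text{for every real } \mu \text{ with } |\mu-1| \geq \tau,
\]
which is exactly what converts the $\mu$-dependent Chebyshev bound into the uniform bound of the claim. For $\mu \leq 1-\tau$ this is immediate since $\mu \leq 1 \leq 1+\tau$ and $|\mu-1| - \sigma \geq \tau - \sigma$. For $\mu \geq 1+\tau$ the substitution $x = \mu - 1 \geq \tau$ reduces it to $(1+x)(\tau-\sigma)^2 \leq (1+\tau)(x-\sigma)^2$, which is an equality at $x = \tau$ and whose quadratic right-hand side dominates the linear left-hand side for $x \geq \tau$ (a one-line derivative check). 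This step reflects the real content of the proof: when $T$ is very over-full with $S$ (large $\mu$), the mean of $|R \cap S|$ is so far from $E_{k-2}$ that the larger deviation $(\mu - 1 - \sigma)E_{k-2}$ precisely compensates for the larger variance bound $\mu E_{k-2}$, keeping the final estimate at $(1+\tau)/((\tau-\sigma)^2 q)$.
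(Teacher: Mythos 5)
Your proposal is correct and follows essentially the same route as the paper: Chebyshev's inequality applied to $|R\cap S|$ with the variance bounded by its mean, the $\tau$-unbalancedness of $T$ pushing $\E[|R\cap S|]$ at least $\tau E_{k-2}$ away from $E_{k-2}$, and $E_{k-2}\geq q$ supplying the final factor of $q$. The only cosmetic difference is that you merge the paper's two cases ($\E[|R\cap S|]\leq(1-\tau)E_{k-2}$ and $\geq(1+\tau)E_{k-2}$) into the single elementary inequality $\mu/(|\mu-1|-\sigma)^2\leq(1+\tau)/(\tau-\sigma)^2$, and you verify the (slight negative) pairwise correlation of the indicators explicitly rather than invoking pairwise independence.
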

\begin{proof}
As before, the size of $R \cap S$ is a sum of pairwise independent indicator variables with expectation:
\begin{equation}\label{eq-expRcapS}
\E[ |R \cap S| ] = \frac{|S \cap T|}{|T|} q^{k-2} = |S \cap T|/q^2.
\end{equation}

Since $T$ is $\tau$-unbalanced, we have that 
\begin{equation}\label{eq-devScapT}
	| |S \cap T| - E_k | \geq \tau E_k.
\end{equation}
Therefore, dividing by $q^2$ and using (\ref{eq-expRcapS}) we have that
\begin{equation}\label{eq-expRfar}
| \E[ |R \cap S|] - E_{k-2}| \geq \tau E_{k-2}.
\end{equation}

We will separate into two cases: case 1 is when 
\begin{equation}\label{eq-case1}
	\E[|R\cap S|] \leq (1-\tau)E_{k-2}.
\end{equation}
In this case (which can only happen if $\tau < 1$), using Chebyshev, the probability that $R$ is $\sigma$-balanced is bounded from above by,
\begin{eqnarray*}
\prob[ |R\cap S| - E_{k-2} \ge -\sigma E_{k-2}] &\leq& \\
\prob[ |R\cap S| - \E[|R\cap S|] \ge (\tau-\sigma)E_{k-2}] &\leq& \\
	\frac{\V(|R \cap S|)}{(\tau-\sigma)^2 E_{k-2}^2} \leq \frac{\E(|R \cap S|)}{(\tau-\sigma)^2 E_{k-2}^2} &\leq& \frac{1-\tau}{(\tau-\sigma)^2 q}.
\end{eqnarray*}

In the second case we have,
$$\E[ |R \cap S|] \geq (1+\tau)E_{k-2}. $$
In this case the probability that $R$ is $\sigma$-balanced is bounded above by,
\begin{align*}
    \prob[ |R\cap S| - E_{k-2} \leq \sigma E_{k-2}] &\leq \\
    \prob\left[ ||R\cap S| - \E[|R\cap S|]|  \ge \E[|R\cap S|] - (1+\sigma)E_{k-2}\right] &\leq \\
    \prob\left[ ||R\cap S| - \E[|R\cap S|]|  \ge \E[|R\cap S|]\cdot \frac{\tau - \sigma}{1+\tau}\right] &\leq \\
	\frac{\V(|R \cap S|)}{(\tau-\sigma)^2/(1+\tau)^2 \E[|R\cap S|]^2} \leq \frac{(1+\tau)^2}{(\tau-\sigma)^2 \E[|R\cap S|]} \leq \frac{1+\tau}{(\tau-\sigma)^2E_{k-2}} &\leq\frac{1+\tau}{(\tau-\sigma)^2q}.
\end{align*}
Hence, the probability that $R$ is $\sigma$ balanced is bounded by $(1+\tau)/((\tau-\sigma)^2 q)$ and so  we are done.
\end{proof}

We next define three important sets:
\begin{itemize}
    \item {\ ($\cL_{k-2}^*(T)$)} :  For $T\in \cL_k(\F_q^n)$, we define $\cL_{k-2}^*(T)$ to be the set of subspaces in $\cL^*_{k-2}(\F_q^n)$ which on translation can lie in $T$ (or equivalently are parallel to $T$).
    \item { ($\cL_{k-2}(T,\parallel W)$)} :  For $T\in \cL_k(\F_q^n)$ and  $W\in \cL_{k-2}^*(T)$ we let $\cL_{k-2}(T,\parallel W)$ be the set of $(k-2)$ flats in $T$ which are parallel to $W$ (notice that there are exactly $q^2$ such flats and that their disjoint union is $T$).
    \item ($C^{\sigma,c}_{k-2}(T)$) : For $T\in \cL_k(\F_q^n)$  we let $C^{\sigma,c}_{k-2}(T)$ be the set of flats $W$ in $\cL_{k-2}^*(T)$ such that at least a $\left(1-\frac{c(1+\tau)}{(\tau-\sigma)^2q}\right)$-fraction of the flats in $\cL_{k-2}(T,\parallel W)$ are in $B^\sigma_{k-2}(T)$ (we will set $c\ge 1$ to two different values for Theorem~\ref{thm-mainhash} and Theorem~\ref{thm-imphash}). 
\end{itemize}

The previous lemma can now be used to prove that, if $T$ is unbalanced, then many $W$'s are in fact in the set $C^{\sigma,c}_{k-2}(T)$ defined above (this is essentially a Markov style averaging argument).

\begin{claim}\label{clm-manyshifts}
Let $T \in \cL_k(\F_q^n)$ be $\tau$-unbalanced with respect to $S$. Suppose $W$ is chosen  uniformly at random 	from $\cL_{k-2}^*(T)$. Then
\[ \prob[ W \in C^{\sigma,c}_{k-2}(T) ] \geq 1-1/c. \]
\end{claim}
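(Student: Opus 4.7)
This claim should follow from Claim~\ref{cla-unbalancedflat} by a straightforward averaging (Markov) argument, with the only substantive step being a clean decomposition of the uniform distribution on $\cL_{k-2}(T)$.

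The first thing I would do is observe that the natural two-step sampling procedure ``pick $W\in \cL_{k-2}^*(T)$ uniformly at random, then pick $R \in \cL_{k-2}(T,\parallel W)$ uniformly at random'' produces an $R$ that is uniformly distributed on $\cL_{k-2}(T)$. This is because $\cL_{k-2}(T)$ is partitioned by parallel class: every $(k-2)$-flat inside the $k$-flat $T$ has a unique direction $W\in \cL_{k-2}^*(T)$, and for each such direction there are exactly $q^{k}/q^{k-2}=q^2$ parallel flats inside $T$. So the two-step sampler is uniform on $\cL_{k-2}(T)$.

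Next, for each $W\in \cL_{k-2}^*(T)$ I would let
\[ f(W) \;=\; \prob_{R\in \cL_{k-2}(T,\parallel W)}\!\bigl[R \notin B^{\sigma}_{k-2}(T)\bigr], \]
i.e. the fraction of flats in $\cL_{k-2}(T,\parallel W)$ that are \emph{not} $\sigma$-unbalanced with respect to $S$. The decomposition above gives
\[ \E_{W\in \cL_{k-2}^*(T)}\!\bigl[f(W)\bigr] \;=\; \prob_{R\in \cL_{k-2}(T)}\!\bigl[R \notin B^{\sigma}_{k-2}(T)\bigr] \;\le\; \frac{1+\tau}{(\tau-\sigma)^2\, q}, \]
where the last inequality is exactly Claim~\ref{cla-unbalancedflat} applied to $T$ (which is $\tau$-unbalanced by hypothesis).

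Finally, unwinding the definition of $C^{\sigma,c}_{k-2}(T)$, the event $W\in C^{\sigma,c}_{k-2}(T)$ is precisely $f(W)\le c(1+\tau)/((\tau-\sigma)^2 q)$. Applying Markov's inequality to the nonnegative random variable $f(W)$,
\[ \prob\!\left[f(W) > c\cdot\frac{1+\tau}{(\tau-\sigma)^2 q}\right] \;\le\; \frac{\E[f(W)]}{c\,(1+\tau)/((\tau-\sigma)^2 q)} \;\le\; \frac{1}{c}, \]
which gives $\prob[W\in C^{\sigma,c}_{k-2}(T)] \ge 1 - 1/c$, as desired. There is no genuine obstacle here — the claim is really just Claim~\ref{cla-unbalancedflat} plus Markov — but the one thing worth being careful about is that the two-step sampler honestly produces the uniform distribution on $\cL_{k-2}(T)$, which is what allows us to relate $\E_W f(W)$ to the unconditional probability bound from Claim~\ref{cla-unbalancedflat}.
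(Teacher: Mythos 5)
Your proof is correct and is essentially the paper's own argument: the paper uses the identical two-step sampler (uniform direction $W\in\cL_{k-2}^*(T)$, then a uniform shift inside $T$) and the identical averaging against Claim~\ref{cla-unbalancedflat}, merely phrased as a proof by contradiction rather than as a direct application of Markov's inequality to $f(W)$. No gap; the only point requiring care, that the two-step sampler is uniform on $\cL_{k-2}(T)$ because each direction has exactly $q^2$ parallel flats in $T$, is exactly the point you verified.
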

\begin{proof}
    Let us say the claim is false then with probability less than $1-1/c$, $W\in C^{\sigma,c}_{k-2}(T)$ for a uniformly random $W\in \cL_{k-2}^*(T)$. Equivalently, with probability greater than $1/c$, $W\not\in C^{\sigma,c}_{k-2}(T)$. We can sample a uniformly random chosen $R\in \cL_{k-2}(T)$ by first picking a direction $W\in \cL_{k-2}^*(T)$ at random and then taking $R$ to be a random shift of $W$ inside $T$. The above assumption will then give us that:
    \begin{align*}
    \prob[ R \in B^\sigma_{k-2}(T) ]&\le \prob[W\not\in C^{\sigma,c}_{k-2}(T)]\left(1 - \frac{c(1+\tau)}{(\tau-\sigma)^2q}\right)+\prob[W\in C^{\sigma,c}_{k-2}(T)]\\
    &\le 1-\prob[W\not\in C^{\sigma,c}_{k-2}(T)]\frac{c(1+\tau)}{(\tau-\sigma)^2q}<1-\frac{1+\tau}{(\tau-\sigma)^2q}.     
    \end{align*}
    This contradicts Claim \ref{cla-unbalancedflat}.
\end{proof}

Given $W\in \cL^*_{k-2}(\F_q^n)$, let $\cL_{k-2}( \parallel W)$ be the set of $k-2$ flats parallel to $W$ and $\cL^*_k(\parallel W)$ be the set of $k$-dimensional subspaces containing $W$. Let $$B^\sigma_{k-2}(\parallel W)= B^\sigma_{k-2} \cap \cL_{k-2}( \parallel W)$$ denote the set of $\sigma$-unbalanced flats parallel to $W$.

The next claim shows that there is a `good' choice of $W \in \cL_{k-2}^*(\F_q^n)$ to which we should restrict our attention (that is, we will consider only $k-2$ flats parallel to $W$).\footnote{This part of the proof corresponds to the statement in the proof overview arguing that the case of general $k$ and $r$ can be reduced to the case of $r=0$ and $k \mapsto k-r$.} This $W$ should preserve the typical behavior of a random $W$ in two respects: one is that $B^\sigma_{k-2}$ should still have low density when restricted to flats parallel to $W$. The other is that $W$ hits $C^{\sigma,c}_{k-2}(f(A)+A)$ for many $A\in \cL_k^*(\parallel W)$.


\begin{claim}\label{cla-existsW}
	There exists $W \in \cL_{k-2}^*(\F_q^n)$ such that
	\begin{enumerate}
		\item $|B^\sigma_{k-2}(\parallel W)| \leq  \frac{1}{\sigma^2 q(1-\sqrt{1-\delta(1-1/c)})} \cdot |\cL_{k-2}( \parallel W)| \le   \frac{2c}{(c-1)\sigma^2\delta}q^{n-k+1}$
		\item $\prob_{A\sim \cL^*_{k}( \parallel W)}[W\in C^{\sigma,c}_{k-2}(f(A)+A)]\ge 1-\sqrt{1-\delta(1-1/c)}\ge \frac{(c-1)\delta}{c+c\sqrt{1-\delta/2}}\ge \frac{\delta(c-1)}{2c}.$
\end{enumerate}
\end{claim}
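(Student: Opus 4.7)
The plan is to sample $W \in \cL_{k-2}^*(\F_q^n)$ uniformly and combine two averaging arguments: first a reverse-Markov step for the random variable
\[
p(W) := \prob_{A \in \cL_k^*(\parallel W)}[W \in C^{\sigma,c}_{k-2}(f(A)+A)],
\]
then a conditional Markov step (not a union bound!) for
\[
q(W) := |B^\sigma_{k-2}(\parallel W)|/|\cL_{k-2}(\parallel W)|.
\]
This lets me find a single $W$ satisfying both conclusions simultaneously.

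First I would compute the two relevant expectations. For $q$, sampling $W \in \cL_{k-2}^*(\F_q^n)$ uniformly and then a uniform shift $R \in \cL_{k-2}(\parallel W)$ produces a uniformly random $(k-2)$-flat in $\F_q^n$, so by Claim~\ref{cla-Bsmall} we have $\E_W[q(W)] \le 1/(\sigma^2 q)$. For $p$, by double-counting flags $W \subset A$ in the Grassmannian, the joint distribution on $(W,A)$ obtained by sampling $W$ uniformly and then $A \in \cL_k^*(\parallel W)$ uniformly agrees with the distribution obtained by sampling $A$ uniformly in $\cL_k^*(\F_q^n)$ and then $W \in \cL_{k-2}^*(A)$ uniformly. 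Since at least a $\delta$-fraction of $A$ have $f(A)+A$ $\tau$-unbalanced, and since $\cL_{k-2}^*(A) = \cL_{k-2}^*(f(A)+A)$ so Claim~\ref{clm-manyshifts} applies, Fubini gives $\E_W[p(W)] \ge \mu$ with $\mu := \delta(1-1/c)$.

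Next, apply reverse Markov to $p(W) \in [0,1]$. Choosing $y := 1-\sqrt{1-\mu}$, the inequality $\E[p] \le y + (1-y)\prob[p \ge y]$ yields
\[
\prob[p(W) \ge y] \;\ge\; \frac{\mu - y}{1-y} \;=\; 1 - \sqrt{1-\mu}.
\]
Let $G$ denote this event. Then $\E[q(W)\,\mathbf{1}_G] \le \E[q(W)] \le 1/(\sigma^2 q)$, so the conditional expectation satisfies $\E[q(W)\mid G] \le 1/(\sigma^2 q\,\prob[G]) \le 1/\bigl(\sigma^2 q(1-\sqrt{1-\mu})\bigr)$. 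Hence there exists $W \in G$ with $q(W)$ bounded by the same quantity, which is exactly property 1 once I multiply by $|\cL_{k-2}(\parallel W)| = q^{n-k+2}$, and is exactly property 2 by definition of $G$.

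Finally, to reach the simplified bounds stated in the claim, I would verify three elementary inequalities for $\mu \in (0,1]$: (a) $1-\sqrt{1-\mu} \ge \mu/2$ (from $\sqrt{1-\mu} \le 1-\mu/2$), giving the $2c/((c-1)\sigma^2\delta)$ form of property 1; (b) the identity $1-\sqrt{1-\mu} = \mu/(1+\sqrt{1-\mu})$ combined, assuming $c \ge 2$ so $\mu \ge \delta/2$, with the monotonicity $\sqrt{1-\mu} \le \sqrt{1-\delta/2}$, yielding the middle inequality of property 2; and (c) $\sqrt{1-\delta/2} \le 1$, yielding the last inequality. The one place I would flag as slightly subtle is precisely the decision to condition rather than union-bound: choosing $y$ and the threshold in $q$ to match via Markov would give two events whose probabilities sum to exactly $1$, leaving the intersection possibly empty, which is why the conditional expectation step is essential.
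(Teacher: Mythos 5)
Your proof is correct and rests on the same two ingredients as the paper's: the estimate $\E_W[q(W)]\le 1/(\sigma^2 q)$ obtained from Claim~\ref{cla-Bsmall} via the two-step sampling of a uniform $(k-2)$-flat, and the estimate $\E_W[p(W)]\ge \delta(1-1/c)$ obtained from Claim~\ref{clm-manyshifts} together with the biregular flag double-counting (the paper's distribution $\mu$ on pairs $W\subset A$). The only genuine difference is how a single good $W$ is extracted: the paper bounds the probabilities of the two bad events by Markov at the complementary thresholds $\alpha=1-\sqrt{1-\delta(1-1/c)}$ and $1-\alpha$ and then applies a union bound, whereas you apply reverse Markov to $p(W)$ to get an event $G$ with $\prob[G]\ge \alpha$ and then bound $\E[\,q(W)\mid G\,]$. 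Your concern that the two Markov bounds ``sum to exactly $1$'' is resolved in the paper because the bad events are defined by strict inequalities, so the corresponding Markov bounds are strict whenever those events have positive probability; still, your conditional-expectation step is a clean way to sidestep this delicacy and it delivers the non-strict bound of item 1 directly. One further point in your favor: the middle inequality in item 2 does require $c\ge 2$ (it can fail for $1<c<2$), exactly as you flag; the paper asserts this chain without verification, but its choices $c=4$ and $c=10$ satisfy it, so nothing downstream is affected.
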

\begin{proof}
Let $\alpha=1-\sqrt{1-\delta(1-1/c)}$. Notice, that $B^\sigma_{k-2}$ is a disjoint union of $B^\sigma_{k-2}(\parallel W)$ over all $W \in \cL_{k-2}^*(\F_q^n)$. Suppose $W$ is chosen uniformly at random from $\cL_{k-2}^*(\F_q^n)$ and let $E_1$ be the event $$|B^\sigma_{k-2}(\parallel W)| >  \frac{1}{\sigma^2 q \alpha} \cdot |\cL_{k-2}( \parallel W)|.$$ 
In other words $E_1$ is the event when $W$ does not satisfy 1. above. We then have,
$$\frac{1}{\sigma^2q\alpha}\prob_{W\sim \cL_{k-2}^*(\F_q^n)}\left[E_1\right]\le \prob[ R \in B^\sigma_{k-2} ].$$

Then, by Claim~\ref{cla-Bsmall}, we have that the probability that $W$ does not satisfy 1. above is less than $\alpha=1-\sqrt{1-\delta(1-1/c)}$. 

Consider the bi-partite graph $G$ between $\cL^*_{k}(\F^n_q)$ and $\cL^*_{k-2}(\F^n_q)$ where the edges correspond to pairs $(A,W)\in \cL^*_{k}(\F^n_q)\times \cL^*_{k-2}(\F^n_q)$ such that $W\subset A$. Let $\mu$ be the distribution over the pairs $(A,W)\in \cL^*_{k}(\F^n_q)\times \cL^*_{k-2}(\F^n_q)$ which is uniform over the edges of $G$. As the graph $G$ is regular on both sides sampling from $\mu$ is equivalent to sampling $A$ uniformly from $\cL^*_{k}(\F^n_q)$ and $W$ uniformly from $\cL^*_{k-2}(A)$. It also is equivalent to uniformly sampling $W\in \cL^*_{k-2}(\F^n_q)$ and sampling $A$ from $\cL^*_k(\parallel W)$. By Claim~\ref{clm-manyshifts} and the fact that at least for a $\delta$ fraction of $A\in \cL_k^*(\F^n_q)$, $f(A)+A$ is $\tau$-unbalanced we have,
\begin{equation}\label{eq-midClaim}
\prob_{(A,W)\sim \mu}[W\in C^{\sigma,c}_{k-2}(f(A)+A)]\ge \delta(1-1/c).
\end{equation}
Let $E_2$ be the event 
$$\prob_{A\sim \cL^*_k(\parallel W)}[W\not\in C_{k-2}^{\sigma,c}(f(A)+A)]> \sqrt{1-\delta(1-1/c)}$$ for a random $W$ (that is 2. above is not satisfied). We have,
$$\prob_{W\sim \cL_{k-2}^*(\F_q^n)}[E_1](1-\alpha)\le \prob_{(A,W)\sim \mu}[W\not\in C^{\sigma,c}_{k-2}(f(A)+A)].$$
Using \eqref{eq-midClaim}, we get that the probability that $W$ does not satisfy 2. above is at most $\sqrt{1-\delta(1-1/c)}$. By a union bound we now see that there exists a $W\in \cL^*_{k-2}(\F^n_q)$ which satisfies the two properties in the claim.
\end{proof}

Fix $W = \hat W$ satisfying the two numbered items of Claim~\ref{cla-existsW}. 
Let $G_{\hat W}$ be the random variable which outputs the random 2-flat 
$$f(\spanV\{U,\hat W\})+ U$$ 
for uniformly random $U\in \cL_2^*(\F_q^n)$. Notice that there is a small probability that   $\spanV\{U,\hat W\}$ is not $k$ dimensional. In this case  we set $f(\spanV\{U,\hat W\})=0$.

We will now show that $G_{\hat W}$ has large intersections with a small set with high probability. The particular structure of the random variable $G_{\hat W}$ allows us to state this using the notion of a Furstenberg set.

\begin{claim}\label{cla-Kakeyaset}
There exists a set $K \subset \F_q^n$ such that
\begin{enumerate}
	\item $|K| \leq \frac{2c}{(c-1)\sigma^2\delta}q^{n-1}.$
	\item $K$ is $\left(2,\left(1-\frac{c(1+\tau)}{(\tau-\sigma)^2q}\right)q^2,\delta\frac{c-1}{2c}(1-1/q-1/q^2)\right)$-Furstenberg.

\end{enumerate}
\end{claim}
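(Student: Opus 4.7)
The natural candidate is $K := \bigcup_{F\in B^\sigma_{k-2}(\parallel \hat W)} F \subseteq \F_q^n$, the point set of the union of all $\sigma$-unbalanced $(k-2)$-flats parallel to $\hat W$. Since each such flat contains exactly $q^{k-2}$ points, item~1 follows immediately from Claim~\ref{cla-existsW}.1 by multiplying the bound on $|B^\sigma_{k-2}(\parallel \hat W)|$ by $q^{k-2}$, giving $|K| \leq \frac{2c}{(c-1)\sigma^2\delta}\,q^{n-1}$.

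For item~2 I would call a subspace $U \in \cL_2^*(\F_q^n)$ \emph{good} if $U \cap \hat W = \{0\}$ (so that $A := \spanV\{U, \hat W\}$ lies in $\cL_k^*(\parallel \hat W)$) and $\hat W \in C^{\sigma,c}_{k-2}(f(A)+A)$. For such a $U$ the 2-flat $T_U := f(A) + U$ is contained in $f(A) + A$, and transversality forces any two distinct points of $T_U$ into different cosets of $\hat W$; hence the quotient $\F_q^n \to \F_q^n/\hat W$ restricts to a bijection from $T_U$ onto the $q^2$ cosets of $\hat W$ lying inside $f(A)+A$, i.e., onto $\cL_{k-2}(f(A)+A, \parallel \hat W)$. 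Consequently $|K \cap T_U|$ equals the number of those parallel-$\hat W$ flats that are $\sigma$-unbalanced, which by the definition of $C^{\sigma,c}_{k-2}$ is at least $\bigl(1 - c(1+\tau)/((\tau-\sigma)^2 q)\bigr)q^2 = m$; thus $T_U$ is the required $m$-rich shift of $U$, witnessed by the translation vector $u = f(A)$.

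It then remains to lower-bound the fraction of good $U$ in $\cL_2^*(\F_q^n)$. Each $A \in \cL_k^*(\parallel \hat W)$ contains exactly $q^{2(k-2)}$ 2-dim complements of $\hat W$ (they are in bijection with sections of $A \to A/\hat W$), so the map $U \mapsto A$ has equal fibers on transversal $U$'s, and Claim~\ref{cla-existsW}.2 transfers directly to yield at least a $\delta(c-1)/(2c)$-fraction of transversal $U$'s being good. A direct subspace count (using $k \leq n-1$) then shows that the transversal $U$'s comprise at least a $(1 - 1/q - 1/q^2)$-fraction of $\cL_2^*(\F_q^n)$, and multiplying these two fractions yields the claimed $\beta$. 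The conceptual heart of the argument is the transversality bijection in the middle paragraph; the subspace enumeration in this final count is the only step requiring careful bookkeeping, but no genuine obstacle arises.
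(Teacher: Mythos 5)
Your proposal is correct and follows essentially the same route as the paper: the same set $K$ (union of $\sigma$-unbalanced $(k-2)$-flats parallel to $\hat W$), the same size bound from Claim~\ref{cla-existsW}, and the same richness argument identifying points of the $2$-flat $f(A)+U$ with the cosets of $\hat W$ inside $f(A)+A$. The only cosmetic difference is that you phrase the density count via equal fibers of $U \mapsto \spanV\{U,\hat W\}$ and a direct transversality count, whereas the paper samples $U$ randomly and bounds $\prob[\dim\spanV\{U,\hat W\}=k]\ge 1-1/q-1/q^2$ by choosing two random vectors; both yield the stated $\beta$.
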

\begin{proof}
	We take
	\[ K = \bigcup_{R \in B^\sigma_{k-2}(\parallel \hat W)} R,\]
	to be the union of all $\sigma$-unbalanced $(k-2)$-flats parallel to $\hat W$. To show that 1. holds, we use the first item of Claim~\ref{cla-existsW} and the fact that each $R$ has $q^{k-2}$ points.

    For a uniformly random $U\in \cL^*_2(\F^n_q)$, $F=f(\spanV(U,\hat W))+U$ gives us a sample from $G_{\hat W}$. Let $T=f(\spanV(U,\hat W))+\spanV(U,\hat W)$. If $\hat W \in C^{\sigma,c}_{k-2}(T)$ then that means at least
    $$\left(1-\frac{c(1+\tau)}{(\tau-\sigma)^2q}\right)q^2$$ 
    many flats in $\cL_{k-2}(T,\parallel \hat W)$ are in $B^\sigma_{k-2}(T)$ and hence contained in $K$. $F$ will intersect with each of these flats (and hence $K$) in distinct points. This is because $T$ is a $f(\spanV(U,\hat W))$-shift of the span of $U$ and $\hat W$ and $U\cap \hat W=\{0\}$ so $T$ is a disjoint union of shifts of $\hat W$ by elements in $F=f(\spanV(U,\hat W))+U$. This implies that $F$ is $(1-c(1+\tau)/(\tau-\sigma)^2q)q^2$-rich with respect to $K$. Finally, note that conditioned on the event that $\spanV(U,\hat W)$ is $k$-dimensional $T$ has the same distribution as $f(A)+A$ where $A$ is uniformly distributed over $\cL^*_k(\parallel \hat W)$. This means
	\begin{align*}
	    \prob\left[ G_{\hat W} \text{ is } \left(1-\frac{c(1+\tau)}{(\tau-\sigma)^2q}\right)q^2\text{-rich with respect to } K\right]\ge \\ \prob_{A\sim\cL_k^*(\parallel \hat W)}\left[{\hat W}\in C^{\sigma,c}_{k-2}(f(A)+A)\right]\cdot \prob_{U\in \cL^*_2(\F_q^n)}\left[\dim\spanV\{U,\hat W\}=k\right].
	\end{align*}
	
	We note $\prob_{U\in \cL^*_2(\F_q^n)}[\dim\spanV\{U,\hat W\}=k]$ is at least $1-1/q-1/q^2$. If we generate $U$ by picking two random vectors then the first one being in $\hat{W}$ has probability at most $1/q^{n-k+2}\le 1/q^2$ and the second being in the space spanned by the first and $\hat{W}$ has probability at most $1/q^{n-k+1}\le 1/q$. Now using Claim \ref{clm-manyshifts} and the equation above we have,
	$$\prob\left[ G_{\hat W} \text{ is } \left(1-\frac{c(1+\tau)}{(\tau-\sigma)^2q}\right)q^2\text{-rich}\right]\ge \frac{(c-1)\delta}{2c}\left(1-\frac{1}{q}-\frac{1}{q^2}\right).$$
	
	The above equation implies 2. as $G_{\hat W}$ by definition takes a uniformly chosen $U\in \cL^*_2(\F_q^n)$ and outputs a flat parallel to $U$.
\end{proof}

To finish the proof of the theorem we need a bound for $(2,\gamma q^2,\beta)$-Furstenberg Sets. We will use Lemma~\ref{lem-fur2-intro} which we prove in the next section. We restate the Lemma here for convenience.

\RestateGo{\restatefur}
\lemfurii*
Given this lemma, we substitute the values 
$$\gamma = \left(1-\frac{c(1+\tau)}{(\tau-\sigma)^2q}\right),\beta=\frac{\delta(c-1)}{2c}\left(1-\frac{1}{q}-\frac{1}{q^2}\right) $$ and the bound on $|K|$ given by Claim~\ref{cla-Kakeyaset} into the lemma above. We get the bound,
\begin{equation}\label{eq-bifur}
    \frac{2c}{(c-1)\sigma^2\delta}q^{n-1}\ge |K|\ge q^n \left(1-\frac{c(1+\tau)}{(\tau-\sigma)^2q}\right)^n \left(1+\frac{1}{q}\right)^{-n}\frac{\delta(c-1)}{2c}\left(1-\frac{1}{q}-\frac{1}{q^2}\right).
\end{equation}

To prove Theorem~\ref{thm-mainhash}  use $q\ge 32\max(n(1+\tau)/(\tau\delta)^2,n)$, $c=4$, $\sigma=\tau/2$ and $\delta\le 1$ in \eqref{eq-bifur} and re-arrange to get:
$$\frac{8}{9n} \ge \left(1-\frac{1}{2n}\right)^n\left(1+\frac{1}{32n}\right)^{-n}\left(1-\frac{1}{32n}-\frac{1}{32^2n^2}\right).$$
Using $(1-x/n)^n\ge e^{-x}(1-x^2/n)$ for $x<n$, $(1+x/n)^n\le e^x$ and $n\ge 5$ then implies:
$$\frac{8}{45}> e^{-1/5}(1-1/20)e^{-1/32}(1-1/160-1/(160)^2)$$
which leads to a contradiction proving Theorem~\ref{thm-mainhash}.

To prove Theorem~\ref{thm-imphash} use $q\ge\max(n(1+\tau)/(\tau-\sqrt{\tau})^2\delta^2,n)$,$\sigma=\sqrt{\tau}$, $\delta\le 1/10$ and set $c=10$ in \eqref{eq-bifur} to get:
$$\frac{400}{81n}\ge\frac{400(\tau-\sqrt{\tau})^2}{81\tau(\tau+1)n}\ge\left(1- \frac{1}{10n}\right)^n\left(1+\frac{1}{n}\right)^{-n}\left(1-\frac{1}{n}-\frac{1}{n^2}\right).$$

Using $(1-x/n)^n\ge e^{-x}(1-x^2/n)$ for $x<n$, $(1+x/n)^n\le e^x$ and $n\ge 20$ gives us:
$$\frac{400}{81\cdot 20}>e^{-1/10}(1-1/(100\cdot 20))e^{-1}(1-/20-1/400)$$
which leads to a contradiction proving Theorem~\ref{thm-imphash}.
\end{proof}

\paragraph{Modifications to prove Theorem~\ref{thm-smoothSlice}:}
In the setting of Theorem~\ref{thm-smoothSlice} we have $E_{k-2}=|S|/q^{k-2}\ge q^\eta$. We see the statements of the various claim can be appropriately modified to prove Theorem~\ref{thm-smoothSlice}. We state the appropriate modification of the main claims proven assuming Theorem~\ref{thm-smoothSlice} is false. That is there exists a function $f: \cL^*_k(\F_q^n)\rightarrow \F_q^n$ (with parameters as in Theorem~\ref{thm-smoothSlice}) such that for a $\delta$ fraction of $A\in \cL_k^*(\F_q^n)$, the flat $f(A)+A$ is $\tau$ unbalanced with respect to $S$. We do not give the proofs as the arguments are identical.

\begin{claim}
If $R$ is chosen uniformly in $\cL_{k-2}(\F_q^n)$ then 
\[ \prob[ R \in B^\sigma_{k-2} ] \leq \frac{1}{\sigma^2 q^\eta}. \]
\end{claim}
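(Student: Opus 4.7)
The plan is to adapt the proof of Claim~\ref{cla-Bsmall} essentially verbatim, replacing only the quantitative lower bound on the expectation $E_{k-2}$. The first step is to observe that the hypothesis $q^k|S| > q^{n+2+\eta}$ of Theorem~\ref{thm-smoothSlice} translates directly into
\[ E_{k-2} \;=\; |S|/q^{n-k+2} \;>\; q^\eta, \]
which takes the place of the bound $E_{k-2} \geq q$ used in the proof of Claim~\ref{cla-Bsmall}. Note that the same hypothesis, combined with $|S| \leq q^n$, forces $k > 2 + \eta \geq 2$, so $k \geq 3$, which is all we need below.

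With this new bound in hand, I would rerun the same Chebyshev argument. Writing $|R \cap S| = \sum_{x \in S} \mathbf{1}[x \in R]$ as a sum of indicator variables (each with mean $p = q^{k-2}/q^n$), the variance is at most $E_{k-2}$: individual variances contribute at most $|S| \cdot p = E_{k-2}$, and for distinct $x, y$ the two indicator events are negatively correlated, since conditioning on the $(k-2)$-dimensional linear part $A$ of $R$ gives $\prob[x, y \in R] = p\cdot\prob[y - x \in A] = p\cdot(q^{k-2}-1)/(q^n-1) \leq p^2$. Chebyshev then yields
\[ \prob\bigl[\,\bigl||R \cap S| - E_{k-2}\bigr| \geq \sigma E_{k-2}\,\bigr] \;\leq\; \frac{\V(|R\cap S|)}{\sigma^2 E_{k-2}^2} \;\leq\; \frac{1}{\sigma^2 E_{k-2}} \;\leq\; \frac{1}{\sigma^2 q^\eta}, \]
which is exactly the required bound.

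There is no real obstacle here: this is simply Claim~\ref{cla-Bsmall} under a strengthened hypothesis on $|S|$, and the only bookkeeping is to propagate $q^\eta$ rather than $q$ through the final Chebyshev estimate. The only sanity check worth isolating is that the variance bound $\V(|R\cap S|) \leq E_{k-2}$ goes through with the weaker negative-correlation argument above, rather than strict pairwise independence; this was already implicitly used in the proof of Claim~\ref{cla-Bsmall} and holds whenever $k - 2 \geq 1$, which we have verified.
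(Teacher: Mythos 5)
Your proposal is correct and matches the paper's intended argument: the paper explicitly states that this claim is proved exactly as Claim~\ref{cla-Bsmall}, with the hypothesis $q^k|S| > q^{n+2+\eta}$ giving $E_{k-2} = |S|/q^{n-k+2} > q^\eta$ in place of $E_{k-2}\geq q$, and then Chebyshev as before. Your extra care in replacing exact pairwise independence by the negative-correlation bound $\prob[x,y\in R]\leq p^2$ is a harmless (indeed slightly more rigorous) rendering of the same variance estimate.
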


\begin{claim}
Let $T \in \cL_k(\F_q^n)$ be $\tau$-unbalanced with respect to $S$. Suppose $R$ is chosen  uniformly at random 	from $\cL_{k-2}(T)$. Then
\[ \prob[ R \in B^\sigma_{k-2}(T) ] \geq 1 - \frac{1+\tau}{(\tau-\sigma)^2  q^\eta}. \]
\end{claim}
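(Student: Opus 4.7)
The plan is to re-run the proof of Claim~\ref{cla-unbalancedflat} essentially verbatim, with the single substitution that the expectation $E_{k-2}$ is now lower-bounded by $q^{\eta}$ rather than $q$. This weaker lower bound is exactly what the Theorem~\ref{thm-smoothSlice} hypothesis $q^{k}|S| > q^{n+2+\eta}$ rearranges into, via $E_{k-2} = |S|/q^{n-k+2}$. The pairwise independence of the indicators $\{[x\in R]\}_{x\in T\cap S}$ for a uniformly random $(k-2)$-flat $R\subset T$ is a purely geometric fact about the affine Grassmannian, so the variance bound $\V(|R\cap S|)\le \E[|R\cap S|]$ carries over without change.

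Concretely, I would first record that $\E[|R\cap S|] = |S\cap T|/q^{2}$ and, since $T$ is $\tau$-unbalanced with respect to $S$, deduce $|\E[|R\cap S|] - E_{k-2}|\ge \tau E_{k-2}$. I would then split into the two cases $\E[|R\cap S|]\le (1-\tau)E_{k-2}$ (only possible for $\tau<1$) and $\E[|R\cap S|]\ge (1+\tau)E_{k-2}$. In each case, exactly as in the proof of Claim~\ref{cla-unbalancedflat}, a Chebyshev estimate bounds the probability that $R$ is $\sigma$-balanced by a constant depending on $\tau,\sigma$ divided by $E_{k-2}$: in the first case the constant is $1-\tau$; in the second case, after rewriting the deviation $\E[|R\cap S|]-(1+\sigma)E_{k-2}$ as $\E[|R\cap S|]\cdot(\tau-\sigma)/(1+\tau)$, the constant becomes $1+\tau$. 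Applying $E_{k-2}\ge q^{\eta}$ and taking the larger of the two bounds produces $(1+\tau)/((\tau-\sigma)^{2}q^{\eta})$, which is the claimed inequality.

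I do not anticipate any genuine obstacle: the claim is a direct transcription of Claim~\ref{cla-unbalancedflat} into the $q^{\eta}$ regime, and every inequality in that proof survives the single replacement $E_{k-2}\ge q \mapsto E_{k-2}\ge q^{\eta}$. The nontrivial content of the smooth-slice extension lives elsewhere, namely in how the eventual application of Lemma~\ref{lem-fur2-intro} has to absorb a $q^{\eta}$-loss instead of a $q$-loss (which is why the field-size hypothesis of Theorem~\ref{thm-smoothSlice} is strengthened to $q^{\eta}\ge C_{\eta}\max(n(1+\tau)/(\tau\delta)^{2},n)$); the present claim is merely the anti-concentration ingredient updated to the new expectation lower bound.
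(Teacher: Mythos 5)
Your proposal is correct and matches the paper's intent exactly: the paper states that the proofs of these modified claims are identical to the originals, and your argument is precisely the proof of Claim~\ref{cla-unbalancedflat} with the single replacement $E_{k-2}\ge q$ by $E_{k-2}=|S|/q^{n-k+2}\ge q^{\eta}$, which is what the hypothesis $q^{k}|S|>q^{n+2+\eta}$ provides. Nothing further is needed.
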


In this proof we redefine $C^{\sigma,c}_{k-2}(T)$ as follows: For $T\in \cL_k(\F_q^n)$  we let $C^{\sigma,c}_{k-2}(T)$ be the set of flats $W$ in $\cL_{k-2}^*(T)$ such that at least a $\left(1-\frac{c(1+\tau)}{(\tau-\sigma)^2q^\eta}\right)$-fraction of the flats in $\cL_{k-2}(T,\parallel W)$ are in $B^\sigma_{k-2}(T)$. 

\begin{claim}
Let $T \in \cL_k(\F_q^n)$ be $\tau$-unbalanced with respect to $S$. Suppose $W$ is chosen  uniformly at random 	from $\cL_{k-2}^*(T)$. Then
\[ \prob[ W \in C^{\sigma,c}_{k-2}(T) ] \geq 1-1/c. \]
\end{claim}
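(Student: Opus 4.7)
The plan is to mirror the proof of Claim~\ref{clm-manyshifts} almost verbatim, with the updated anti-concentration bound (the $q$ replaced by $q^\eta$) plugged in at the end. The argument is a contrapositive/Markov-style averaging: if too few direction subspaces $W$ lie in $C^{\sigma,c}_{k-2}(T)$, then too few $(k-2)$-flats in $T$ can be $\sigma$-unbalanced, contradicting the previous claim.

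More concretely, I would first fix the sampling viewpoint: a uniformly random $R \in \cL_{k-2}(T)$ can be drawn by first picking a direction $W$ uniformly in $\cL_{k-2}^*(T)$ and then picking a shift of $W$ uniformly among the $q^2$ parallel copies making up $T$. Assume for contradiction that $\prob[W \in C^{\sigma,c}_{k-2}(T)] < 1 - 1/c$, so that with probability strictly greater than $1/c$ the direction $W$ lies outside $C^{\sigma,c}_{k-2}(T)$, in which case by the definition of $C^{\sigma,c}_{k-2}(T)$ (the $q^\eta$ version) the fraction of shifts of $W$ inside $T$ that land in $B^\sigma_{k-2}(T)$ is at most $1 - c(1+\tau)/((\tau-\sigma)^2 q^\eta)$.

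Conditioning on whether $W \in C^{\sigma,c}_{k-2}(T)$ and bounding the conditional probability that the resulting $R$ is $\sigma$-unbalanced by $1$ in the good case and by $1 - c(1+\tau)/((\tau-\sigma)^2 q^\eta)$ in the bad case gives
\[
\prob[R \in B^\sigma_{k-2}(T)] \;\le\; 1 \;-\; \prob[W \notin C^{\sigma,c}_{k-2}(T)] \cdot \frac{c(1+\tau)}{(\tau-\sigma)^2 q^\eta} \;<\; 1 \;-\; \frac{1+\tau}{(\tau-\sigma)^2 q^\eta}.
\]
This contradicts the updated anti-concentration claim, which says that $\prob[R \in B^\sigma_{k-2}(T)] \geq 1 - (1+\tau)/((\tau-\sigma)^2 q^\eta)$, completing the argument.

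There is essentially no obstacle here: the only change compared to the proof of Claim~\ref{clm-manyshifts} is that the quantity $q$ in the threshold defining $C^{\sigma,c}_{k-2}(T)$ and in the anti-concentration bound is replaced by $q^\eta$, and the algebra goes through unchanged. One should just double check that the anti-concentration claim is applied in the form stated in the modifications (i.e., with $q^\eta$), and that the hypothesis $T$ being $\tau$-unbalanced is what allows that claim to be invoked in the first place.
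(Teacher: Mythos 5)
Your proposal is correct and matches the paper's approach: the paper proves this $q^\eta$ variant by declaring the argument identical to that of Claim~\ref{clm-manyshifts}, which is exactly the contrapositive averaging you carry out (sample $R$ by direction-then-shift, bound $\prob[R\in B^\sigma_{k-2}(T)]$ by conditioning on $W\in C^{\sigma,c}_{k-2}(T)$, and contradict the modified anti-concentration claim). No gaps.
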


\begin{claim}
	There exists $W \in \cL_{k-2}^*(\F_q^n)$ such that
	\begin{enumerate}
		\item $|B^\sigma_{k-2}(\parallel W)| \leq  \frac{1}{\sigma^2 q^\eta(1-\sqrt{1-\delta(1-1/c)})} \cdot |\cL_{k-2}( \parallel W)| \le   \frac{2c}{(c-1)\sigma^2\delta}q^{n-k+2-\eta}$
		\item $\prob_{A\sim \cL^*_{k}( \parallel W)}[W\in C^{\sigma,c}_{k-2}(f(A)+A)]\ge 1-\sqrt{1-\delta(1-1/c)}\ge \frac{(c-1)\delta}{c+c\sqrt{1-\delta/2}}\ge \frac{\delta(c-1)}{2c}.$
\end{enumerate}
\end{claim}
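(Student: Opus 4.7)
The plan is to adapt the proof of Claim~\ref{cla-existsW} essentially verbatim, with the only change being that every factor of $1/q$ arising from Chebyshev is replaced by $1/q^\eta$. This substitution is legitimate because the hypothesis $q^k|S|>q^{n+2+\eta}$ of Theorem~\ref{thm-smoothSlice} yields $E_{k-2} = |S|/q^{n-k+2} \ge q^\eta$, which is exactly the ingredient behind the $q^\eta$-denominators in the three restated concentration/anti-concentration claims immediately above. Setting $\alpha := 1-\sqrt{1-\delta(1-1/c)}$, items~1 and~2 will come from two complementary Markov arguments combined by a union bound.

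For item~1: decompose $\cL_{k-2}(\F_q^n) = \bigsqcup_W \cL_{k-2}(\parallel W)$ and $B^\sigma_{k-2} = \bigsqcup_W B^\sigma_{k-2}(\parallel W)$ indexed by $W \in \cL_{k-2}^*(\F_q^n)$, and note that sampling $R$ uniformly from $\cL_{k-2}(\F_q^n)$ is the same as first sampling $W$ uniformly and then $R$ uniformly from $\cL_{k-2}(\parallel W)$, since every parallel class has the same cardinality $q^{n-k+2}$. The restated concentration claim bounds $\prob[R \in B^\sigma_{k-2}]$ by $1/(\sigma^2 q^\eta)$; Markov applied to the fractional density $|B^\sigma_{k-2}(\parallel W)|/|\cL_{k-2}(\parallel W)|$ then bounds the probability that item~1 fails by $\alpha$. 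The second inequality in item~1 follows from $|\cL_{k-2}(\parallel W)| = q^{n-k+2}$ together with the elementary bound $\alpha \ge \delta(c-1)/(2c)$, which is $1-\sqrt{1-x} \ge x/2$ applied with $x = \delta(1-1/c)$.

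For item~2: pass to the biregular bipartite containment graph $G$ on $\cL_k^*(\F_q^n) \times \cL_{k-2}^*(\F_q^n)$, whose uniform edge distribution $\mu$ may be sampled either as $A$ uniform then $W \subset A$ uniform, or as $W$ uniform then $A \in \cL_k^*(\parallel W)$ uniform. Combining the restated Claim~\ref{clm-manyshifts} with the assumption that at least a $\delta$-fraction of $A$ have $f(A)+A$ $\tau$-unbalanced yields $\prob_{(A,W)\sim \mu}[W \in C^{\sigma,c}_{k-2}(f(A)+A)] \ge \delta(1-1/c)$. Switching the order of sampling in $\mu$ and applying Markov from the $W$-side bounds the failure probability of item~2 by $\sqrt{1-\delta(1-1/c)} = 1-\alpha$, and a union bound with item~1 delivers the desired $W$.

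The main thing to watch is that the union bound $\alpha + (1-\alpha) = 1$ on failure probabilities only produces a nonempty set of good $W$'s because one of the two Markov bounds is strict (namely, the one applied to a strict-inequality event, exactly as in Claim~\ref{cla-existsW}); losing this strictness would kill the argument and is the only subtle bookkeeping step. The remaining piece is the algebraic chain $1-\sqrt{1-\delta(1-1/c)} \ge \delta(c-1)/(c+c\sqrt{1-\delta/2}) \ge \delta(c-1)/(2c)$ asserted in item~2, which follows from $(c-1)/c \ge 1/2$ (valid for $c \ge 2$, covering the $c \in \{4,10\}$ used elsewhere). No new ingredient beyond the $q \to q^\eta$ substitution is required.
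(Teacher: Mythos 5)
Your proposal is correct and follows essentially the same route as the paper: the paper explicitly omits this proof, stating that the argument is identical to that of Claim~\ref{cla-existsW} with $E_{k-2}\ge q^\eta$ (from $q^k|S|>q^{n+2+\eta}$) replacing $E_{k-2}\ge q$, which is precisely the parallel-class decomposition plus Markov for item~1 and the biregular bipartite graph $\mu$ plus Markov from the $W$-side for item~2 that you describe. Your side remarks (the strictness needed for the union bound, $|\cL_{k-2}(\parallel W)|=q^{n-k+2}$, and $(c-1)/c\ge 1/2$ for the algebraic chain, satisfied by the values $c=4,10$ used in the paper) are accurate and consistent with the paper's argument.
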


Using the previous claims we can prove the next claim that will contradict Lemma~\ref{lem-fur2-intro} completing the proof.

\begin{claim}[Furstenberg Set construction from assuming Theorem~\ref{thm-smoothSlice} is false]
There exists a set $K \subset \F_q^n$ such that
\begin{enumerate}
	\item $|K| \leq \frac{2c}{(c-1)\sigma^2\delta}q^{n-\eta}.$
	\item $K$ is $\left(2,\left(1-\frac{c(1+\tau)}{(\tau-\sigma)^2q^\eta}\right)q^2,\delta\frac{c-1}{2c}(1-1/q-1/q^2)\right)$-Furstenberg.
\end{enumerate}
\end{claim}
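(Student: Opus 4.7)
The plan is to mimic the construction from Claim~\ref{cla-Kakeyaset} verbatim, feeding in the modified inputs that hold under the smaller-entropy hypothesis $E_{k-2}\ge q^\eta$. First I would invoke the last of the modified claims (the analog of Claim~\ref{cla-existsW}) to fix a direction $\hat W \in \cL_{k-2}^*(\F_q^n)$ satisfying
\[
|B^\sigma_{k-2}(\parallel \hat W)| \le \frac{2c}{(c-1)\sigma^2 \delta}\,q^{n-k+2-\eta}
\]
and
\[
\prob_{A \sim \cL_k^*(\parallel \hat W)}\bigl[\hat W \in C^{\sigma,c}_{k-2}(f(A)+A)\bigr] \ge \frac{(c-1)\delta}{2c}.
\]
I would then define the candidate Furstenberg set to be the union of all $\sigma$-unbalanced $(k-2)$-flats in the chosen direction,
\[
K \;=\; \bigcup_{R \in B^\sigma_{k-2}(\parallel \hat W)} R.
\]

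The size bound in item~1 is then immediate: each $R$ contributes $q^{k-2}$ points, and multiplying by the above cardinality bound gives $|K| \le \frac{2c}{(c-1)\sigma^2 \delta} q^{n-\eta}$.

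For item~2, I would argue exactly as in the original proof. Pick $U \in \cL_2^*(\F_q^n)$ uniformly at random, set $T = f(\spanV\{U,\hat W\}) + \spanV\{U,\hat W\}$ and $F = f(\spanV\{U,\hat W\}) + U$. When $\spanV\{U,\hat W\}$ is genuinely $k$-dimensional, $T$ is a $k$-flat, and it decomposes as a disjoint union of $q^2$ shifts of $\hat W$, one through each point of $F$. Hence if $\hat W \in C^{\sigma,c}_{k-2}(T)$ then at least a $(1 - c(1+\tau)/((\tau-\sigma)^2 q^\eta))$-fraction of those $q^2$ parallel shifts lie inside $K$, each contributing a distinct point of $F \cap K$; this exhibits $F$ as $(1 - c(1+\tau)/((\tau-\sigma)^2 q^\eta))q^2$-rich with respect to $K$. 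Conditioned on $\spanV\{U,\hat W\}$ having full dimension $k$ (which happens with probability at least $1 - 1/q - 1/q^2$), $T$ has the same distribution as $f(A)+A$ for $A$ uniform in $\cL_k^*(\parallel \hat W)$, so combining with the second property of $\hat W$ yields the required lower bound $\delta \frac{c-1}{2c}(1 - 1/q - 1/q^2)$ on the fraction of directions in $\cL_2^*(\F_q^n)$ in which $K$ has a rich $2$-flat.

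The proof is essentially bookkeeping: there is no new geometric or probabilistic idea relative to Claim~\ref{cla-Kakeyaset}. The only substantive change is that the concentration and anti-concentration estimates now carry a denominator of $q^\eta$ in place of $q$, reflecting the weaker hypothesis $E_{k-2} \ge q^\eta$; this is what ultimately produces the $q^{n-\eta}$ in the size bound and the $q^\eta$ in the richness parameter. The only step that requires any care is verifying that the definition of $C^{\sigma,c}_{k-2}(T)$ used in Theorem~\ref{thm-smoothSlice} (with $q^\eta$ in place of $q$) matches precisely the richness one obtains from the geometric decomposition of $T$, so that the probabilistic averaging and the union bound go through unchanged. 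Once this is checked, the conclusion follows and supplies the Furstenberg set needed to contradict Lemma~\ref{lem-fur2-intro}.
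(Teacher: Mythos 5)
Your proposal is correct and is exactly the argument the paper intends: the paper omits the proof, stating the arguments are identical to Claim~\ref{cla-Kakeyaset}, and your construction (taking $K$ to be the union of $\sigma$-unbalanced $(k-2)$-flats parallel to the good direction $\hat W$ supplied by the modified analog of Claim~\ref{cla-existsW}, then repeating the richness argument with $q^\eta$ replacing $q$) is precisely that adaptation. The bookkeeping you describe, including the size bound $q^{k-2}\cdot\frac{2c}{(c-1)\sigma^2\delta}q^{n-k+2-\eta}$ and the full-dimension probability $1-1/q-1/q^2$, matches the paper's proof of Claim~\ref{cla-Kakeyaset} step for step.
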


\subsection{The case of \texorpdfstring{$\F_2$}{F2}}\label{sec-binary}

In this section we prove Theorem~\ref{thm-binary} using Theorem~\ref{thm-mainhash}. The same argument can be used to derive Theorem~\ref{thm-impbinary} from Theorem~\ref{thm-imphash}. We restate the theorem for convenience. 

\RestateGo{\restatebinary}
\theobinary*
\begin{proof}
	Take $$ \ell = \lceil \log_2(32\max(n(1+\tau)/(\tau\delta)^2,n)) \rceil.$$ and set $$q = 2^\ell.$$ Let $$n' = \lceil n/\ell \rceil $$ so that  we have $$2^n \leq q^{n'}.$$ We now identify $\F_2^n$ with an $\F_2$-linear subspace of $\F_q^{n'}$, e.g., by identifying $\F_q^{n'}$ with $\F_2^{n'\ell}$ as $\F_2$-vector spaces and then identifying $\F_2^n$ with the first $n \leq n'\ell$ coordinates (the rest can be set to zero). The above embedding of $\F_2^n$ in $\F_q^{n'}$ allows us to think of the set $S$ as sitting in $\F_q^{n'}$ and so we can apply Theorem~\ref{thm-mainhash} if we check that all the conditions are met. We first see that, by our choice of $\ell$, the bound on $q \geq 32\max(n'(1+\tau)/(\tau\delta)^2,n')$ is met (notice that $n' \leq n$). We also need to  check that $|S| > q^4$ which holds from our assumption $|S| \geq 2^{20}\max(n^4(1+\tau)^4/(\tau\delta)^8,n^4)$. $n'\ge 5$ is also satisfied.
	
	Hence we can apply Theorem~\ref{thm-mainhash} in our setting. Let $r$ be such that $$q^r < |S| \leq q^{r+1}$$ and set $$t'= r-3.$$
	
	We get that for a $(1-\delta)$-fraction of all surjective linear maps $L' : \F_q^{n'} \to \F_q^{t'}$ satisfy the property that $L'(U_S)$ is $\tau q^{-t'}$-close to uniform in the $\ell_\infty$ distance. Since an $\F_q$-linear map is also an $\F_2$-linear map, we can think of $L'$ as an $\F_2$-linear map from $\F_2^{n'\ell}$ to $\F_2^{t' \ell}$. Setting $$t = t'\ell$$ and let $L$ be the restriction of $L'$ to the subspace we previously identified with $\F_2^n$ (which contains~$S$) we get that for any such $L : \F_2^n \to \F_2^t$, $L(U_S)$ is $\tau 2^{-t}$-close to uniform in the $\ell_\infty$ distance (clearly $L(U_S)$ and $L'(U_S)$ have the same distribution).
	
	We now bound the `entropy loss' or $ \log_2|S| - t.$  Notice that $$\log_2|S| \leq (r+1)\ell$$ and that $$t =  t'\ell = (r-3)\ell.$$ Combining the last two inequalities we get that 
	$$ \log_2|S| -  t \leq 4\ell \le 4 \log_2(\max(n(1+\tau)/(\tau\delta)^2,n)-20.$$ 
	
	We are not done yet as not all surjective linear maps from $\F_2^n$ to $\F_2^t$ will be restrictions of surjective linear maps from $\F_q^{n'}$ to $\F_q^{t'}$. We now overcome this obstacle using a random rotation argument. We started out with embedding $S\subseteq \F_2^n$ in a bigger space $\F_2^{n'\ell}$. If we can show a $(1-\delta)$-fraction of surjective linear maps from $\F_2^{n'\ell}$ to $\F_2^{t}$ satisfy the desired property we are also done. We also let $\phi:\F_2^{n'\ell}\rightarrow \F_q^{n'}$ be the $\F_2$-linear isomorphism between $\F_2^{n'\ell}$ and $\F_q^{n'}$ we had implicitly chosen in the beginning.
	
	Let $H$ be the set of surjective linear maps from $\F_2^{n'\ell}$ to $\F_2^t$ which are also surjective linear maps from $\F_q^{n'}$ to $\F_q^{t'}$ (indeed every surjective linear map from $\F_q^{n'}$ to $\F_q^{t'}$ is a surjective linear map from $\F_2^{n'\ell}$ to $\F_2^t$ but the converse is not the case). We just showed that a $(1-\delta)$-fraction of the maps in $H$ satisfy the desired property. Let $M$ be a random invertible linear map in $\text{GL}_{n'\ell}(\F_2)$. We note $\phi\circ M$ is also a valid $\F_2$-linear isomorphism between $\F_2^n{n'\ell}$ and $\F_q^{n'}$. If we repeated our earlier argument with this isomorphism we will have proven that a $(1-\delta)$-fraction of the maps in $M\cdot H=\{L\circ M| L\in H\}$ satisfy the desired property. But under a random rotation we see that each surjective linear map from $\F_2^{n'\ell}$ to $\F_2^t$ will be included in an equal number of $M\cdot H$. This proves that there is at least a $(1-\delta)$-fraction of surjective linear maps from $\F_2^{n'\ell}$ to $\F_2^t$ which satisfy the desired property.
\end{proof}

\section{Proof of Lemma~\ref{lem-fur2-intro} using the polynomial method}\label{sec-furstenberg-proof}

We will be using the polynomial method to lower bound the sizes of $(2,\gamma q^2,\beta)$-Furstenberg sets in $\F_q^n$ which are needed to prove our hashing guarantees. As stated earlier, these bounds have been proven in \cite{ORW22} using a combinatorial reduction. The bounds from \cite{ORW22} can be directly used to prove our hashing theorems with slightly worse constants. 

We will give a new proof to lower bound these set sizes by extending ideas developed in~\cite{dhar2021kakeya} to prove bounds for Kakeya sets over rings of integers modulo a composite number. The advantages are three fold: we get slightly better constants, the argument here gives significantly better bounds for $(1,\gamma q,\beta)$-Furstenberg sets (although not important for our application) and as mentioned earlier these ideas were later used to resolve the maximal Kakeya conjecture over rings of integers modulo a composite number~\cite{dhar2022maximal}.

In this section we develop improvements to the polynomial method argument to get the desired dependence on $\beta$. In a nutshell, our improvement comes from picking a carefully chosen set of monomials, instead of just taking all monomials up to a specified degree. This section will be divided into three sub-sections. First, we review basic definitions and results on the polynomial method (with multiplicities) as developed in \cite{DKSS13}. Then, we devote a section to understanding ranks of sub-matrices of a special  matrix which maps a polynomial to its evaluations (with derivatives) on a given set of points. Finally, we put everything together to prove  Lemma~\ref{lem-fur2-intro}.

\subsection{Multiplicities and Hasse derivative}
 We first review the definitions of multiplicities and Hasse derivatives that will be needed in the proof (see  \cite{DKSS13} for a more detailed discussion). We will allow the definitions to be over an arbitrary field $\F$ since we will need to apply them both for $\F=\F_q$ (which is the usual case) and also for  $\F = \F_q(t_1,t_2)$ (the field of rational function in $t_1,t_2$ with coefficients in $\F_q$). Working over this extension field is natural when handling two-dimensional flats and already appears in \cite{KLSS2011}.

\begin{definition}[Hasse Derivatives]
Let $\F$ be a field. Given a polynomial $f\in \F[x_1,\hdots,x_n]$  and an $\mathbf{i}\in \Z_{\ge 0}^n$ the $\mathbf{i}$th {\em Hasse derivative} of $f$ is the polynomial $f^{(\mathbf{i})}$ in the expansion $$f(x+z)=\sum_{\mathbf{j}\in \Z_{\ge 0}^n} f^{(\mathbf{j})}(x)z^{\mathbf{j}}$$ where $x=(x_1,...,x_n)$, $z=(z_1,...,z_n)$ and $z^{\mathbf{j}}=\prod_{k=1}^n z_k^{j_k}$.  
\end{definition}

Hasse derivatives satisfy the following useful property (see \cite{DKSS13} for a proof). We will only need this property  to show that, if $f^{(\mathbf{i}+\mathbf{j})}$ vanishes at a point then so does $(f^{(\mathbf{i})})^{(\mathbf{j})}$.

\begin{lemma}\label{lem:chainRule}

Given a polynomial $f\in \F[x_1,\hdots,x_n]$ and $\mathbf{i},\mathbf{j}\in \Z_{\ge 0}^n$, we have 
$$(f^{(\mathbf{i})})^{(\mathbf{j})}=f^{(\mathbf{i}+\mathbf{j})}\prod\limits_{k=1}^n\binom{i_k+j_k}{i_k}$$
\end{lemma}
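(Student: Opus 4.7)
The plan is to prove the identity by computing $f(x+y+z)$ in two different ways (where $x,y,z$ each denote $n$-tuples of formal variables) and then comparing coefficients of monomials in $y$ and $z$. The defining equation for Hasse derivatives gives a clean, purely combinatorial identity when the shift is split into two pieces, and the binomial coefficients $\binom{i_k+j_k}{i_k}$ will appear naturally from the multinomial expansion of $(y_k+z_k)^{i_k+j_k}$.

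More concretely, I would first expand in one step by treating the full shift $y+z$ as a single shift: by definition,
$$f(x+y+z) = \sum_{\mathbf{k}\in \Z_{\ge 0}^n} f^{(\mathbf{k})}(x)(y+z)^{\mathbf{k}}.$$
Then I would expand each $(y_k+z_k)^{k_k}$ by the binomial theorem and collect, rewriting the right-hand side as a sum over pairs $(\mathbf{i},\mathbf{j})$ by setting $\mathbf{k}=\mathbf{i}+\mathbf{j}$. This produces
$$f(x+y+z) = \sum_{\mathbf{i},\mathbf{j}} \left(\prod_{k=1}^n \binom{i_k+j_k}{i_k}\right) f^{(\mathbf{i}+\mathbf{j})}(x)\, y^{\mathbf{i}} z^{\mathbf{j}}.$$
Next I would expand in two steps, treating $y$ as the first shift and $z$ as the second (or vice versa): applying the definition once gives $f((x+y)+z)=\sum_{\mathbf{j}} f^{(\mathbf{j})}(x+y) z^{\mathbf{j}}$, and then applying it again to each $f^{(\mathbf{j})}(x+y)$ as a polynomial in $x$ shifted by $y$ yields
$$f(x+y+z) = \sum_{\mathbf{i},\mathbf{j}} (f^{(\mathbf{j})})^{(\mathbf{i})}(x)\, y^{\mathbf{i}} z^{\mathbf{j}}.$$

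Finally, I would equate coefficients of $y^{\mathbf{i}} z^{\mathbf{j}}$ in the two expressions, which directly yields $(f^{(\mathbf{j})})^{(\mathbf{i})}(x) = \prod_{k=1}^n \binom{i_k+j_k}{i_k}\, f^{(\mathbf{i}+\mathbf{j})}(x)$, i.e. the claimed identity (up to the harmless swap of the roles of $\mathbf{i}$ and $\mathbf{j}$, which is symmetric in the binomial factor). I do not anticipate a real obstacle here: the only mildly delicate point is justifying the rearrangement of the double sum and ensuring that the second application of the defining identity is valid when the coefficients of $f^{(\mathbf{j})}$ themselves lie in $\F[x]$; both are immediate from treating everything as formal polynomial identities in $\F[x,y,z]$, so the proof reduces to bookkeeping with multinomial coefficients.
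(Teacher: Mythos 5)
Your proof is correct: the double expansion of $f(x+y+z)$, once with the shift $y+z$ taken as a whole and once iteratively, followed by comparing coefficients of $y^{\mathbf{i}}z^{\mathbf{j}}$ in $\F[x,y,z]$, is exactly the standard argument; the paper itself omits the proof and points to \cite{DKSS13}, where this same computation is carried out. The symmetry remark handling the swap of $\mathbf{i}$ and $\mathbf{j}$ is fine since $\binom{i_k+j_k}{i_k}=\binom{i_k+j_k}{j_k}$.
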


We make precise what it means for a polynomial to vanish on a point $a\in \F^n$ with multiplicity. First we recall for a point $\mathbf{j}$ in the non-negative lattice $\Z^n_{\ge 0}$, its weight is defined as $\text{wt}(\mathbf{j})=\sum_{i=1}^n j_i$.

\begin{definition}[Multiplicity]
For a polynomial $f\in \F[x_1,\hdots,x_n]$ and a point $a\in \F^n$ we say $f$ vanishes on $a$ with {\em multiplicity} $m\in \Z_{\geq 0}$, if $m$ is the largest integer such that all Hasse derivatives of $f$ of weight strictly less than~$m$ vanish on $a$. We use {\em $\textsf{mult}(f,a)$} to refer to the multiplicity of $f$ at $a$.
\end{definition}

Note that the number of Hasse derivatives over $\F[x_1,\hdots,x_n]$ with weight strictly less than~$m$ is $\binom{n+m-1}{n}$. Hence, requiring that a polynomial vanishes to order $m$ at a single point $a$ enforces the same number of homogeneous linear equations on the coefficients of the polynomial.  We will use the following simple property concerning multiplicities of composition of polynomials (see \cite{DKSS13} for a proof).

\begin{lemma}\label{lem:multComp}
Given a polynomial $f\in \F[x_1,\hdots,x_n]$ and a tuple $H=(h_1,\hdots,h_n)$ of polynomials in $\F[y_1,\hdots,y_m]$, and $a\in \F^m$ we have, 
{\em $$\textsf{mult}(f\circ H, a)\ge \textsf{mult}(f,H(a)).$$}
\end{lemma}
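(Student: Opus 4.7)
The plan is to prove the lemma by directly expanding $(f\circ H)(a+z)$ in powers of $z$ and tracking the smallest-degree nonzero terms. Set $m := \textsf{mult}(f, H(a))$. By the definition of multiplicity, all Hasse derivatives $f^{(\mathbf{i})}(H(a))$ with $\text{wt}(\mathbf{i}) < m$ vanish, and by the definition of Hasse derivatives for $f\circ H$ it suffices to show that the expansion of $(f\circ H)(a+z)$ in the formal variables $z = (z_1,\ldots,z_m)$ contains no monomial of total degree strictly less than $m$.

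First I would write, for each component polynomial $h_k$, the Hasse expansion
$$h_k(a+z) \;=\; h_k(a) + \tilde h_k(a,z), \qquad \text{where } \tilde h_k(a,z) \;=\; \sum_{\mathbf{k}:\,\text{wt}(\mathbf{k})\ge 1} h_k^{(\mathbf{k})}(a)\, z^{\mathbf{k}},$$
so $\tilde h_k(a,z)$ has no constant term in $z$. Bundling these as $\tilde H(a,z) = (\tilde h_1(a,z),\ldots,\tilde h_n(a,z))$ we get $H(a+z) = H(a) + \tilde H(a,z)$.

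Second, I would expand $f$ around the point $H(a)$ using its own Hasse expansion, substituting $x = H(a)$ and $z \mapsto \tilde H(a,z)$ (legitimate since everything is a formal polynomial identity in the $z_i$):
$$f\bigl(H(a+z)\bigr) \;=\; \sum_{\mathbf{i}} f^{(\mathbf{i})}(H(a)) \, \tilde H(a,z)^{\mathbf{i}}.$$
By the vanishing hypothesis, only indices $\mathbf{i}$ with $\text{wt}(\mathbf{i}) \geq m$ contribute. For any such $\mathbf{i}$, the product $\tilde H(a,z)^{\mathbf{i}} = \prod_k \tilde h_k(a,z)^{i_k}$ is a product of $\text{wt}(\mathbf{i})$ factors each of which has no constant term in $z$, so every monomial in $\tilde H(a,z)^{\mathbf{i}}$ has total $z$-degree at least $\text{wt}(\mathbf{i}) \geq m$. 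Consequently $(f\circ H)(a+z)$ has no monomial in $z$ of total degree less than $m$, which by the definition of Hasse derivatives gives $(f\circ H)^{(\mathbf{j})}(a)=0$ for all $\text{wt}(\mathbf{j})<m$, hence $\textsf{mult}(f\circ H,a)\ge m$.

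I do not expect any real obstacle: the argument is essentially bookkeeping once the two Hasse expansions are composed. The only mildly delicate point is making sure the formal substitution of one power series expansion into another is justified as a polynomial identity — it is, because $\tilde H(a,z)$ is a polynomial with no constant term in $z$, so raising it to the $\mathbf{i}$-th power and summing over $\mathbf{i}$ truncates cleanly. No appeal to the characteristic of $\F$ is needed, which is precisely why Hasse (rather than ordinary) derivatives are the right tool here.
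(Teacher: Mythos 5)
Your argument is correct and is essentially the standard proof (the one in the reference \cite{DKSS13} that the paper cites for this lemma): expand $H(a+z)=H(a)+\tilde H(a,z)$ with $\tilde H$ having no constant term in $z$, plug into the Hasse expansion of $f$ at $H(a)$, and observe that the surviving terms have $z$-degree at least $\textsf{mult}(f,H(a))$. The only cosmetic issue is that you reuse the letter $m$ both for the number of variables $y_1,\hdots,y_m$ and for the multiplicity, so rename one of them in a final write-up.
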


We will now state the multiplicity version of the  Schwartz-Zippel bound~\cite{schwartz1979probabilistic,ZippelPaper} 
(see \cite{DKSS13} for a proof). We denote by $\F[x_1,..,x_n]_{\le d}$ the space of polynomials of total degree at most $d$ with coefficients in $\F$.

\begin{lemma}[Schwartz-Zippel with multiplicities]\label{multSchwartz}
Let $\F$ be a field, $d\in \Z_{\geq 0}$ and let $f\in \F[x_1,..,x_n]_{\le d}$ be a non-zero polynomial. Then, for any finite subset $U\subseteq \F$ ,
{\em $$\sum\limits_{a\in U^n} \textsf{mult}(f,a) \le d|U|^{n-1}.$$}
\end{lemma}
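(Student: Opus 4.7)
The plan is to induct on the number of variables $n$. The base case $n=1$ is the classical univariate bound: a nonzero polynomial of degree at most $d$ has at most $d$ roots counted with multiplicity, obtained by iteratively factoring out $(x - a)^{\textsf{mult}(f,a)}$ at each root.

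For the inductive step $n \ge 2$, the key move is to strip off every linear factor of $f$ in $x_n$ whose root lies in $U$. Concretely, write $f = h(x_1, \ldots, x_n) \cdot \prod_{a \in T}(x_n - a)^{e_a}$, where $T \subseteq U$ is the set of $a$ with $(x_n - a) \mid f$, the exponent $e_a \ge 1$ is maximal, and $h$ is the remaining quotient. Setting $E := \sum_{a \in T} e_a$, the maximality of the $e_a$ guarantees that $(x_n - a) \nmid h$ for every $a \in U$, so for each fixed $a_n \in U$ the restriction $h(\cdot, a_n) \in \F[x_1, \ldots, x_{n-1}]$ is a nonzero polynomial of degree at most $d - E$.

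Multiplicity is additive under products over a field, and $(x_n - a)$ has multiplicity $1$ at $(a', a)$ and $0$ at $(a', a_n)$ for $a_n \ne a$, so $\textsf{mult}(f,(a',a_n)) = \textsf{mult}(h,(a',a_n)) + e_{a_n}$, with the convention $e_{a_n} := 0$ for $a_n \notin T$. Summing over $(a',a_n) \in U^n$ gives $\sum_a \textsf{mult}(f,a) = \sum_a \textsf{mult}(h,a) + E \, |U|^{n-1}$. For each fixed $a_n \in U$, the inductive hypothesis applied to $h(\cdot, a_n)$ in $n-1$ variables bounds $\sum_{a'} \textsf{mult}(h(\cdot, a_n), a') \le (d-E)|U|^{n-2}$, and Lemma~\ref{lem:multComp} applied with $H(x') = (x', a_n)$ shows this quantity upper-bounds $\sum_{a'} \textsf{mult}(h, (a', a_n))$ as well. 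Summing over $a_n \in U$ yields $\sum_a \textsf{mult}(h, a) \le (d-E)|U|^{n-1}$, and adding the two contributions gives $\sum_a \textsf{mult}(f, a) \le d|U|^{n-1}$, as required.

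The main obstacle this factorization addresses is the existence of hyperplanes $\{x_n = a_n\}$ on which $f$ vanishes identically, where a direct restrict-and-induct argument would be vacuous. Pulling out the $(x_n - a)^{e_a}$ factors accounts for their contribution exactly via the combinatorial term $E|U|^{n-1}$, leaving behind a quotient $h$ whose restriction to each hyperplane $\{x_n = a_n\}$ is nonzero and of controlled degree $d - E$, so that the inductive step applies cleanly and the two bookkeeping terms add up to exactly $d |U|^{n-1}$.
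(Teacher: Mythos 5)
Your proof is correct. One remark on the comparison: the paper does not prove Lemma~\ref{multSchwartz} itself but cites \cite{DKSS13}, so the relevant benchmark is the standard proof there. That proof also inducts on $n$, but via a different decomposition: one writes $f=\sum_{j\le t} f_j(x_1,\dots,x_{n-1})\,x_n^j$ with $f_t\neq 0$, shows for each fixed $a'$ that $\sum_{a_n\in U}\textsf{mult}(f,(a',a_n))\le |U|\cdot\textsf{mult}(f_t,a')+t$ (by passing to a Hasse derivative of order $(\mathbf{i},0)$ with $f_t^{(\mathbf{i})}(a')\neq 0$ and applying the univariate bound on the fiber), and then applies the induction hypothesis to $f_t$. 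You instead strip out all $U$-rooted linear factors in $x_n$, account for them exactly through the term $E|U|^{n-1}$ using additivity of multiplicity under products, and run the induction fiberwise on the quotient via Lemma~\ref{lem:multComp}; this is a genuinely different and somewhat more elementary route, which isolates transparently the only obstruction to naive restriction, namely hyperplanes $x_n=a_n$ on which $f$ vanishes identically. Two small points you should make explicit if you write this up: the identity $\textsf{mult}(fg,a)=\textsf{mult}(f,a)+\textsf{mult}(g,a)$ is not among the facts stated in this paper (it is standard and proved in \cite{DKSS13}: translate $a$ to the origin and multiply lowest-degree homogeneous parts, using that $\F[x_1,\dots,x_n]$ is an integral domain), and the step ``$(x_n-a_n)\nmid h$ implies $h(\cdot,a_n)\neq 0$'' deserves the one-line justification by division by the monic polynomial $x_n-a_n$. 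Both arguments yield the same tight bound $d|U|^{n-1}$; the derivative-based argument of \cite{DKSS13} interacts more directly with the multiplicity machinery used elsewhere, while yours buys simplicity and a cleaner bookkeeping of where the extremal examples (unions of hyperplanes) come from.
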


\subsection{The EVAL matrix, its submatrices and their ranks}

If $M$ is a matrix over an extension field of $\F_q$, we define the $\F_q$-rank of $M$, denoted by $\rank_{\F_q} M$, to be the size of the largest subset of columns of $M$ which are $\F_q$-linearly independent (in other words, no non-zero $\F_q$-linear combination of those columns is $0$). For convenience, we define the coefficient matrix of a matrix with entries in $\F_q[t_1,t_2]$. This will help us argue about the $\F_q$-rank of a matrix over an extension, by connecting it with the rank of a matrix with entries in~$\F_q$.

\begin{definition}[Coefficient matrix of $E$]
    Let $E$ be  an $n_1\times n_2$ matrix with entries in $\F_q[t_1,t_2]_{\leq d}$.  The {\em coefficient matrix of $E$}, denoted by {\em $\Coeff(E)$}, is a $\binom{d+2}{2}n_1\times n_2$ matrix with entries in $\F_q$ whose rows are labelled by elements in $((i,j),k)\in\Z_{\ge 0}^2\times [n_1]$ and whose entry in row $((i,j),k)$ and column $\ell$ is given by the coefficient of $t_1^it_2^j$ of the polynomial in the $(k,\ell$)'th entry of $E$. 
\end{definition}
In other words, to construct $\Coeff(E)$ we replace each entry with a (column) vector of its coefficients.  For example: 
$$
 E=\begin{bmatrix}
 t_1 & t_2 + 1 \\
 2+4t_1 & t_1+3t_2
 \end{bmatrix}
 ,\ \ \Coeff(E)=\begin{blockarray}{*{2}{c} l}
\begin{block}{[*{2}{c}] l}
  0 & 1 \bigstrut[t]\\
  1 & 0 \\
  0 & 1 \\
  2 & 0 \\
  4 & 1 \\
  0 & 3 \\
\end{block}
\end{blockarray}.
$$

By construction we have,
    $$\rank_{\F_q} E=\rank_{\F_q} \Coeff(E).$$

Our main object of interest is the matrix encoding the evaluation of a subset of monomials (with their derivatives) on a subset of points.

\begin{definition}[$\EVAL^m(S,W)$ matrix]
Let $\F$ be a field, and let $n,m \in \mathbb{N}$. Given a set $S \subset \F^n$ and a set of monomials $W\subset \F[x_1,\hdots,x_n]$, we let {\em $\EVAL^m(S,W)$} denote an  $|S|\binom{m-1+n}{n}\times |W|$ matrix whose columns are indexed by $W$ and rows are indexed by tuples $(x,\mathbf{j})\in S\times \Z_{\ge 0}^n$ such that $\text{wt}(\mathbf{j})<m$.
The $((x,\mathbf{j}),f)$th entry of this matrix is,
$$f^{(\mathbf{j})}(x).$$
In other words, the $(x,\mathbf{j})$th row of the matrix consists of the evaluation of the $\mathbf{j}$'th Hasse derivative of all $f\in W$ at $x$. Equivalently, the $f$'th column of the matrix consists of the evaluations of weight strictly less than $m$ Hasse derivatives of $f$ at all points in $S$.
\end{definition}

We let,
$$\mathcal{V}=\{u't_1+v't_2|u',v'\in \F_q\}^n=\{ut_1+vt_2|u,v\in \F_q^n\}\subseteq (\F_q[t_1,t_2])^n$$ 
denote the set of $n$-tuples of homogeneous linear forms in $t_1,t_2$
and 
$$\mathcal{V}_{\text{full}}=\{ut_1+vt_2\in \cV\,|\, \dim_{\F_q} \spanV\{u,v\}=2\}\subseteq \mathcal{V}$$ denote the subset of $\cV$ in which the coefficient vectors of $t_1$ and of $t_2$ are linearly independent.

Let $W_{d,n}$ denote the set of monomials in $n$-variables $x_1,\hdots,x_n$ of degree at most $d$.  Our first lemma shows that the $\F_q$-rank of $\EVAL^m(\cV,W_{d,n})$ is maximal whenever $d$ is not too large. This is essentially the Schwartz-Zippel lemma since it means that a polynomial of bounded degree could be recovered from its evaluations (up to high enough order) on a product set.

\begin{lemma}[Rank of $\EVAL^m(\cV,W_{d,n})$]
Let $m\in \N$ then for all $d<mq^2$ we have,
{\em $$\text{rank}_{\F_q} \EVAL^m(\cV,M_{d,n})=|W_{d,n}|= \binom{d+n}{d}.$$}
\end{lemma}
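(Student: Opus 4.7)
The plan is to apply Schwartz--Zippel with multiplicities (Lemma~\ref{multSchwartz}) over the extension field $\F := \F_q(t_1,t_2)$, exploiting the fact that $\cV$ is a product set of size $q^{2n}$ inside $\F^n$. The upper bound $\text{rank}_{\F_q}\EVAL^m(\cV,W_{d,n})\le |W_{d,n}|$ is trivial, so all the content is in showing that the columns are $\F_q$-linearly independent.

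First I would translate linear independence into a statement about polynomials. An $\F_q$-linear combination of the columns with coefficients $\{c_f\}$ corresponds to the polynomial $h = \sum_{f\in W_{d,n}} c_f\, f \in \F_q[x_1,\dots,x_n]_{\le d}$, and by linearity of the Hasse derivative the $(x,\mathbf{j})$-entry of the combined column is $h^{(\mathbf{j})}(x)$. So the combination vanishes iff $h^{(\mathbf{j})}(x)=0$ for every $x\in \cV$ and every $\mathbf{j}$ with $\text{wt}(\mathbf{j})<m$. Viewing $h$ as an element of $\F[x_1,\dots,x_n]$ (its coefficients still lie in $\F_q \subset \F$, and Hasse derivatives are defined by formal expansion so they are unchanged under extension of scalars), this is precisely the statement $\textsf{mult}(h, x) \ge m$ for every $x \in \cV$.

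Next I would record the key structural observation: $\cV = U^n$ where $U = \{a t_1 + b t_2 \mid a,b \in \F_q\} \subset \F$, and $|U|=q^2$ since the $q^2$ forms $at_1+bt_2$ are pairwise distinct in $\F$. Suppose toward contradiction that $h\ne 0$. Applying Lemma~\ref{multSchwartz} to $h\in \F[x_1,\dots,x_n]_{\le d}$ and the finite set $U\subset \F$ yields
$$\sum_{x\in U^n} \textsf{mult}(h,x) \;\le\; d\,|U|^{n-1} \;=\; d\, q^{2(n-1)},$$
while the multiplicity hypothesis forces the left-hand side to be at least $m\,|U|^n = m q^{2n}$. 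Together these give $m q^2 \le d$, contradicting $d < m q^2$, so $h=0$ and the columns are independent. There is no real obstacle here; the only cleverness is the choice of ambient field. A naive Schwartz--Zippel over $\F_q$ would only detect $q^n$ points and yield the much weaker threshold $d < mq$, whereas passing to $\F_q(t_1,t_2)$ turns $\cV$ into a genuine product of size $q^{2n}$, providing exactly the extra factor of $q$ needed to reach $d < mq^2$.
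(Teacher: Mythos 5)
Your proposal is correct and follows essentially the same route as the paper: interpret a vanishing $\F_q$-linear combination of columns as a degree-$\le d$ polynomial in $\F_q[x_1,\dots,x_n]$ vanishing to multiplicity $m$ on the product set $\cV=U^n\subset \F_q(t_1,t_2)^n$ with $|U|=q^2$, then apply Lemma~\ref{multSchwartz} to get $mq^{2n}\le d\,q^{2(n-1)}$, contradicting $d<mq^2$. Your added remarks (linearity of Hasse derivatives, invariance under extension of scalars, and why the extension field buys the extra factor of $q$) are fine elaborations of steps the paper leaves implicit.
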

\begin{proof}
    Recall $\cV=\{u't_1+v't_2 \in \F_q(t_1,t_2)|u,v\in \F_q\}^n$. 
    Any $\F_q$-linear combination of columns in $\EVAL^m(S,W_{d,n})$ for some subset $S\subseteq \cV$ corresponds to looking at the evaluation of the weight  $< m$ Hasse derivatives on $S$ of a degree at most $d$ polynomial in $\F_q[x_1,\hdots,x_n]$. To be precise if we take the linear combination of columns corresponding to the monomials $f_1,f_2,\hdots,f_\ell$ with coefficients $\alpha_1,\hdots,\alpha_\ell\in \F_q$ the column vector we get will be the evaluation of the weight  $<m$ Hasse derivatives of $\sum_{i=1}^\ell \alpha_if_i\in \F_q[x_1,\hdots,x_n]$ over $S$. Note that the polynomial we are considering has coefficients only in $\F_q$ while the evaluations are being done over the field $\F_q(t_1,t_2)$.
    
    For any $d<mq^2$ any $\F_q$-linear combination of columns in $\EVAL^m(\cV,W_{d,n})$ being $0$ will be equivalent to a degree at most $mq^2-1$ polynomial vanishing on $\cV$ with multiplicity $m$. By Lemma \ref{multSchwartz} we see that a non-zero polynomial of degree at most $d$ vanishing on 
    $\cV$ (which is a product set of size $q^{2n}$)
    with multiplicity at least $m$ satisfies 
    $$dq^{2(n-1)}\ge mq^{2n}$$ 
    which leads to a contradiction (as $d<mq^2$). This means $\EVAL^m(\cV,W_{d,n})$ has $\F_q$-rank $|W_{d,n}|$ for $d<mq^2$. Note this proof would also show that the $\F_q(t_1,t_2)$-rank of $\EVAL^m(\cV,W_{d,n})$ is $|W_{d,n}|$ for $d<mq^2$.
\end{proof}
We next show that the same rank bound holds even if we restrict the rows to only come from the smaller set $\cV_{\text{full}}$.
 \begin{lemma}
    {\em $\EVAL^m(\cV_{\text{full}},W_{d,n})$ has $\F_q$-rank $|W_{d,n}|$ for $d<mq^2$.}
\end{lemma}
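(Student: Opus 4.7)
The plan is to reduce the statement for $\cV_{\text{full}}$ to the previous lemma (which already handles all of $\cV$). A non-trivial $\F_q$-linear dependence among the columns of $\EVAL^m(\cV_{\text{full}}, W_{d,n})$ corresponds to a non-zero polynomial $P \in \F_q[x_1,\ldots,x_n]_{\le d}$ all of whose Hasse derivatives $P^{(\mathbf{j})}$ of weight less than $m$ satisfy $P^{(\mathbf{j})}(ut_1+vt_2) = 0$ in $\F_q[t_1,t_2]$ for every $(u,v) \in \F_q^n\times\F_q^n$ with $u,v$ linearly independent over $\F_q$. I would show that this identity must in fact hold for \emph{all} $(u,v) \in \F_q^n \times \F_q^n$; the previous lemma then forces $P = 0$ (since $d < mq^2$), contradicting non-triviality and proving that the columns are linearly independent.

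The main step is a substitution trick. Fix $\mathbf{j}$ with $\text{wt}(\mathbf{j}) < m$ and a non-zero $u_0 \in \F_q^n$. Since $n \ge 2$ (and for $n=1$ we have $\cV_{\text{full}} = \emptyset$, which should be excluded from the statement or handled separately), pick any $v \in \F_q^n$ linearly independent from $u_0$. By hypothesis, $P^{(\mathbf{j})}(u_0 t_1 + v t_2)$ is the zero element of $\F_q[t_1,t_2]$. Specializing $t_2 = 0$ gives $P^{(\mathbf{j})}(u_0 t_1) = 0$ in $\F_q[t_1]$, i.e., the one-variable polynomial $P^{(\mathbf{j})}(u_0 \tau) \in \F_q[\tau]$ vanishes identically. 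For any $v_0 = \lambda u_0$ with $\lambda \in \F_q$, substituting $\tau = t_1 + \lambda t_2$ then yields $P^{(\mathbf{j})}(u_0 t_1 + v_0 t_2) = P^{(\mathbf{j})}\bigl(u_0(t_1 + \lambda t_2)\bigr) = 0$ in $\F_q[t_1,t_2]$, which covers every linearly dependent pair with $u_0 \ne 0$. The remaining cases $u_0 = 0$ with $v_0 \ne 0$ (symmetric: substitute $t_1 = 0$ in a pair $(u, v_0)$ with $u$ independent of $v_0$) and $u_0 = v_0 = 0$ (evaluate at $\tau = 0$ in an identity of the previous form) are immediate.

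I do not anticipate a serious obstacle beyond locating this substitution: the essential observation is that the hypothesis gives vanishing identically in $\F_q[t_1,t_2]$, which is much stronger than vanishing merely as a function on $\F_q^2$, and this is what lets one extract the one-variable identity $P^{(\mathbf{j})}(u_0\tau) \equiv 0$ from information at a single linearly independent $(u_0, v)$. Once this identity is in hand, every Hasse derivative of weight less than $m$ vanishes at every point of $\cV$, and the previous lemma produces the desired contradiction.
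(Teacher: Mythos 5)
Your proposal is correct and follows essentially the same route as the paper: both reduce to the previous lemma over all of $\cV$ by exploiting that the linear combination is an $\F_q$-polynomial whose weight-$<m$ Hasse derivatives vanish \emph{identically} in $\F_q[t_1,t_2]$ on $\cV_{\text{full}}$, and then recover the degenerate pairs by formal substitutions (setting $t_2=0$ and replacing the remaining variable by a linear form). The only cosmetic difference is the parametrization of the degenerate pairs, which does not change the argument.
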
 
    \begin{proof}
        This lemma will need the fact that we are only computing the $\F_q$ (and not $\F_q(t_1,t_2)$) rank. Consider any $\F_q$-linear combination $f$ of monomials in $W_{d,n}$. It suffices to show that if~$f$ vanishes with multiplicity at least $m$ over $\cV_{\text{full}}$ then it vanishes with multiplicity at least $m$ over~$\cV$. $\cV\setminus \cV_{\text{full}}$ contains elements of the form $ut_1$ or $u(ct_1+t_2)$ where $u\in \F_q^n$ and $c\in \F_q$. First we consider $u\in \F_q^n\setminus \{0\}$. We can pick a $v\in \F_q^n$ such that $v$ and $u$ are linearly independent. $ut_1+vt_2$ now is an element in $\cV_{\text{full}}$. This means~$f$ vanishes on $ut_1+vt_2$ with multiplicity at least $m$. $f$ is a polynomial in $\F_q[x_1,\hdots,x_n]$ which means all its Hasse derivatives are also $\F_q$-polynomials. Therefore, for any $\mathbf{i}\in \Z_{\ge 0}^n$ we get $f^{(\mathbf{i})}(ut_1)$ by setting $t_2=0$ in $f^{(\mathbf{i})}(ut_1+vt_2)$. This implies that $f$ vanishes on $ut_1$ with multiplicity at least $m$. Setting $t_1=0$ then shows that $f$ vanishes on $0$ with multiplicity at least $m$. Again as $t_1$ is a formal variable and $f\in \F_q[x_1,\hdots,x_n]$ we can replace $t_1$ with $ct_1+t_2$ to get $f$ vanishes on $u(ct_1+t_2)$ with multiplicity at least $m$.  
\end{proof} 
    
Our final lemma, which is the heart  of this section, shows that any $\delta$-fraction of the rows in $\EVAL^m(\cV_{\text{full}},W_{d,n})$ have rank at least $\delta$ times the rank of the full matrix. This is not true for an arbitrary matrix and uses the fact that the general linear group acts on the set of rows in a transitive way.

\begin{lemma}[Rank of $\EVAL^m(S,W_{d,n})$]
Let $m\in \N$ and $S\subseteq \mathcal{V}_{\text{full}}$ with $|S|\ge \delta |\mathcal{V}_{\text{full}}|,\delta\in [0,1]$ then for all $d<mq^2$ we have,
{\em $$\text{rank}_{\F_q} \EVAL^m(S,W_{d,n})\ge \delta  \cdot|W_{d,n}|=\delta \binom{d+n}{d}.$$}
\end{lemma}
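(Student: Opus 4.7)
The plan is to leverage the transitive action of $G := \text{GL}_n(\F_q)$ on $\cV_{\text{full}}$ together with the previous lemma, via a clean averaging argument. Since the previous lemma gives $\rank_{\F_q} \EVAL^m(\cV_{\text{full}}, W_{d,n}) = K := |W_{d,n}|$, I would begin by picking $K$ rows of this matrix that are $\F_q$-linearly independent, indexed by labels $(y_1,\bj_1),\dots,(y_K,\bj_K)$ with $y_i \in \cV_{\text{full}}$ and $\text{wt}(\bj_i) < m$. Writing $r_{(y,\bj)}$ for the corresponding $\F_q$-linear functional $g \mapsto g^{(\bj)}(y)$ on $V := \F_q[x_1,\ldots,x_n]_{\le d}$, the chosen $r_{(y_i,\bj_i)}$ then form an $\F_q$-basis of the row span.

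For each $M \in G$, the map $\Phi_M : V \to V$ defined by $g \mapsto g \circ M$ is an $\F_q$-linear automorphism of $V$, so $\{r_{(y_i,\bj_i)} \circ \Phi_M\}_{i=1}^K$ is again an $\F_q$-basis of the row span. The crucial computation, obtained by writing the Taylor expansion $(g \circ M)(y+z) = g(My+Mz) = \sum_{\bi} g^{(\bi)}(My)(Mz)^{\bi}$ and reading off the coefficient of $z^{\bj}$, is
\[
 r_{(y,\bj)} \circ \Phi_M \;=\; \sum_{|\bi|=|\bj|} c_{\bj,\bi}(M)\, r_{(My,\bi)},
\]
where $c_{\bj,\bi}(M) \in \F_q$ is the coefficient of $z^{\bj}$ in $(Mz)^{\bi}$. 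The two features that matter are (i) all multi-indices on the right have weight $|\bi|=|\bj|<m$, so each $r_{(My,\bi)}$ is a genuine row of $\EVAL^m(\cV_{\text{full}},W_{d,n})$, and (ii) every such row has $y$-coordinate exactly $My$.

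The argument then closes by averaging. For each $M \in G$ set $I_M := \{i : My_i \in S\}$; for $i \in I_M$, the translated functional $r_{(y_i,\bj_i)} \circ \Phi_M$ is an $\F_q$-linear combination of rows of $\EVAL^m(S, W_{d,n})$, hence lies in that matrix's row span, and as a subfamily of a basis it is $\F_q$-linearly independent. Therefore $\rank_{\F_q} \EVAL^m(S, W_{d,n}) \ge |I_M|$. Since $G$ acts transitively on $\cV_{\text{full}}$, for uniform $M \in G$ each $My_i$ is uniformly distributed on $\cV_{\text{full}}$, giving $\E_M |I_M| = K \cdot |S|/|\cV_{\text{full}}| \ge \delta K$, so some $M$ achieves $|I_M| \ge \delta K = \delta \binom{d+n}{d}$, as required.

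The only step demanding real care is the displayed identity for $r_{(y,\bj)} \circ \Phi_M$: one must verify that the scalars $c_{\bj,\bi}(M)$ actually land in $\F_q$ (which holds because $M$ has entries in $\F_q$, so $(Mz)^{\bi}$ is a polynomial in the formal variables $z$ with $\F_q$-coefficients), and that only equal weights $|\bi|=|\bj|$ occur (immediate since $(Mz)^{\bi}$ is homogeneous of degree $|\bi|$ in $z$). This weight-preservation is precisely what keeps the averaging inside the $\text{wt}(\bj)<m$ regime and is where the specific behavior of Hasse derivatives under the $\text{GL}_n$-action enters in a genuine way.
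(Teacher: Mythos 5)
Your use of the transitive $\text{GL}_n(\F_q)$-action and a random $M$ is the same engine as the paper's proof, and the transformation identity $r_{(y,\bj)}\circ\Phi_M=\sum_{|\bi|=|\bj|}c_{\bj,\bi}(M)\,r_{(My,\bi)}$ is correct. The gap is the final inference. The quantity to be bounded, $\rank_{\F_q}\EVAL^m(S,W_{d,n})$, is defined through the \emph{columns}: it is the largest number of monomials with no nontrivial $\F_q$-relation among their evaluation columns, equivalently $\dim_{\F_q}V-\dim_{\F_q}Z_S$ where $Z_S\subset V$ is the space of degree-$\le d$ polynomials vanishing with multiplicity at least $m$ at every point of $S$ (this column notion is exactly what Corollary~\ref{cor:goodMono} consumes). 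What you produce instead are $|I_M|$ $\F_q$-linearly independent elements of the $\F_q$-span of the \emph{rows}, i.e.\ of maps $V\to\F_q[t_1,t_2]$. For matrices over $\F_q[t_1,t_2]$ with coefficients restricted to $\F_q$, this row quantity does not lower-bound the column quantity: the $2\times1$ matrix with entries $t_1,t_2$ has two $\F_q$-independent rows but column $\F_q$-rank $1$, and the same phenomenon occurs for genuine $\EVAL$ matrices (for $n=2$, $d=1$, $m=1$ and five points of $\cV_{\text{full}}$ in general position, the rows are five $\F_q$-independent vectors while there are only three columns). The reason is that your $r$'s take values in the large ring $\F_q[t_1,t_2]$: every element of their span kills $Z_S$, but $\F_q$-independence of maps into $\F_q[t_1,t_2]$ is cheap and implies nothing about $\dim_{\F_q}(V/Z_S)$. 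Relatedly, the claim that your chosen $K$ rows form an $\F_q$-\emph{basis of the row span} is unjustified (that span can have $\F_q$-dimension far above $|W_{d,n}|$), and even the existence of $K$ $\F_q$-independent rows of $\EVAL^m(\cV_{\text{full}},W_{d,n})$ itself does not follow directly from the column statement of the preceding lemma.

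The repair is precisely the paper's device of the coefficient matrix: replace each $\F_q[t_1,t_2]$-valued row by its scalar slices, i.e.\ the functionals $\rho_{(y,\bj,(j,k))}\colon g\mapsto$ (coefficient of $t_1^jt_2^k$ in $g^{(\bj)}(y)$) $\in\F_q$, which are the rows of $\Coeff(\EVAL^m(\cdot,W_{d,n}))$. The preceding lemma supplies $|W_{d,n}|$ linearly independent such functionals, your identity survives slicing, $\rho_{(y,\bj,(j,k))}\circ\Phi_M=\sum_{|\bi|=|\bj|}c_{\bj,\bi}(M)\,\rho_{(My,\bi,(j,k))}$, and since these are honest $\F_q$-valued functionals on $V$, row rank equals column rank for the $\F_q$-matrix $\Coeff(\EVAL^m(S,W_{d,n}))$, so $|I_M|$ independent elements of its row span do force $\rank_{\F_q}\EVAL^m(S,W_{d,n})\ge|I_M|$; the averaging over $M$ then finishes exactly as you wrote. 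With that substitution your argument becomes a mild repackaging of the paper's proof, which keeps the functionals fixed, moves the set to $M\cdot S$, selects independent rows of the coefficient matrix, and transfers the bound back to $S$ via the column-space isomorphism $f(x)\mapsto f(M^{-1}x)$ (their appeal to Lemma~\ref{lem:chainRule} plays the role of your transformation identity).
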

\begin{proof}
    Consider $S\subseteq \cV_{\text{full}}$ such that $|S|=\delta |\cV_{\text{full}}|$. For any $M\in \text{GL}_n(\F_q)$ we let $M$ act on $ut_1+vt_2$ where $u,v\in \F_q^n$ as $M\cdot(ut_1+vt_2)=Mut_1+Mvt_2$. Let $M\cdot S=\{M\cdot y|y\in S\}$.
    \begin{claim}
    {\em $$\rank_{\F_q}\EVAL^m(S,W_{d,n})=\rank_{\F_q}\EVAL^m(M\cdot S,W_{d,n}).$$}
    \end{claim}
    \begin{subproof}
        We will prove this statement by constructing an isomorphism between the column-space of the two matrices. An element in the column space of $\EVAL^m(S,W_{d,n})$ is the evaluation of the weight strictly less than $m$ Hasse derivatives on $S$ of a polynomial $f(x)\in \F_q[x_1,\hdots,x_n]$ of degree at most~$d$. We map such a vector to the evaluation of the weight strictly less than $m$ Hasse derivatives on $M\cdot S$ of the polynomial $f(M^{-1}x)$ which will also be of degree at most~$d$. The choice of $f$ in the beginning can be ambiguous but if there are two polynomials $f(x)$ and $g(x)$ having the same evaluation of weight strictly less than $m$ Hasse derivatives over $S$ then $f(x)-g(x)$ vanishes on $S$ with multiplicity at least $m$. By Lemma~\ref{lem:chainRule}, $f(M^{-1}x)-g(M^{-1}x)$ vanishes on $M\cdot S$ with multiplicity $m$ which implies $f(M^{-1}x)$ and $g(M^{-1}x)$ evaluate to the same weight strictly less than $m$ Hasse derivatives over $M\cdot S$.
        The inverse map can be similarly constructed.
    \end{subproof}
    The above claim shows it suffices to show the rank bound for any $M\cdot S$ where $M\in \text{GL}_n(\F_q)$. We do this by a probabilistic method argument.
    
    The previous Lemma implies that $\Coeff(\EVAL^m(\cV_{\text{full}},W_{d,n}))$ has $\F_q$-rank $|W_{d,n}|$. As this is a matrix with $\F_q$ entries this means that there exists a $|W_{d,n}|=\binom{d+n}{n}$ subset of rows $R$ of $\Coeff(\EVAL^m(\cV_{\text{full}},W_{d,n}))$ which are linearly independent.
    These rows are indexed by tuples 
    $$(x,\bi,(j,k))\in \cV_{\text{full}}\times \Z_{\ge 0}^n\times \Z_{\ge 0}^2$$
    with $\text{wt}(\bi)<m$ and $j+k\le d$. The $(x,\bi,(j,k))$th row is the coefficient of $t_1^jt_2^k$ in the evaluation of the $\bi$th Hasse Derivative at $x$ of the monomials in $W_{d,n}$.

    We pick an $M\in \text{GL}_n(\F_q)$ uniformly at random. We now calculate the expected fraction of the rows from $R$ which appear in $\Coeff(\EVAL^m(M\cdot S, M_{d,n}))$. 
    
    A row in $R$ indexed by $(x,i,(j,k))\in \cV_{\text{full}}\times \Z_{\ge 0}^n\times \Z_{\ge 0}^2$ will appear in $\Coeff(\EVAL^m(M\cdot S, W_{d,n}))$ if and only if $x\in M\cdot S$. As the action of $\text{GL}_n(\F_q)$ on $\cV_{\text{full}}$ we see that this happens with probability at least $\delta$. This means the expected fraction of rows in $R$ appearing in $\Coeff(\EVAL^m(M\cdot S, W_{d,n}))$ is at least $\delta$. This ensures that there is some matrix $M$ such that $\Coeff(\EVAL^m(M\cdot S, W_{d,n}))$ and hence $\EVAL^m(M\cdot S, W_{d,n})$ has $\F_q$-rank at least $\delta|W_{d,n}|$. 
\end{proof}
We note the above lemma could be proven in a more general setting where we wanted to compare the $\F_q$ rank of $\EVAL^m(G,W_{d,n})$ for $G=S\subseteq \F^n$ and $G=S'\subseteq S$ a large subset of $S$ as long as the general linear group acts transitively on $S$. For instance, this style of argument was also used in \cite{dhar2021kakeya} to obtain a better dependence on $\beta$ for $(1,m,\beta)$-Furstenberg sets\footnote{Denoted as $(m,\beta)$-Kakeya sets in \cite{dhar2021kakeya}.} over $\Z/p^k\Z$. 

We will use a simple corollary of this lemma.
\begin{corollary}\label{cor:goodMono}
Let $r\in \N$ and $S\subseteq \cV_{\text{full}}$ with $|S|\ge \delta |\cV_{\text{full}}|,\delta\in [0,1]$ then for any $d<rq^2$ there exists a set $P_S(d,r), |P_S(d,r)|=\delta \binom{d+n}{n}$ of monomials of degree at most $d$ such that no non-zero $\F_q$-linear combination of monomials in $P_S(d,r)$ vanishes with multiplicity at least $r$ over all points in $S$. 
\end{corollary}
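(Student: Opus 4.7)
The plan is to derive this corollary directly from the preceding rank bound, which asserts that $\rank_{\F_q} \EVAL^r(S, W_{d,n}) \ge \delta \binom{d+n}{n}$ whenever $d < rq^2$. Since the $\F_q$-rank of a matrix equals the maximum size of an $\F_q$-linearly independent set of its columns, I would select a collection of $\lfloor \delta \binom{d+n}{n} \rfloor$ columns of $\EVAL^r(S, W_{d,n})$ that are $\F_q$-linearly independent. The columns of this matrix are indexed by elements of $W_{d,n}$, so this selection defines the desired subset $P_S(d,r) \subseteq W_{d,n}$ (the statement should be read with a floor, or equivalently for any integer at most $\delta \binom{d+n}{n}$).

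Next, I would verify the non-vanishing property by the usual contrapositive. Suppose some nonzero $\F_q$-linear combination $f = \sum_{g \in P_S(d,r)} \alpha_g\, g$ with $\alpha_g \in \F_q$ vanished with multiplicity at least $r$ at every point of $S$. By definition of multiplicity, every Hasse derivative $f^{(\mathbf{j})}$ with $\text{wt}(\mathbf{j}) < r$ vanishes on $S$. Hasse derivation is $\F_q$-linear on polynomials, so $f^{(\mathbf{j})}(x) = \sum_g \alpha_g\, g^{(\mathbf{j})}(x)$ for each $(x,\mathbf{j}) \in S \times \{\mathbf{j} : \text{wt}(\mathbf{j}) < r\}$. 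In terms of the matrix $\EVAL^r(S, W_{d,n})$, this says that the $\F_q$-linear combination of the columns indexed by $P_S(d,r)$ with coefficients $\alpha_g$ is the zero vector, contradicting the $\F_q$-linear independence of the chosen columns.

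There is essentially no obstacle here: the work has already been done in the previous lemma, and this step is a purely formal translation between the language of ranks of evaluation matrices and the language of vanishing polynomials. The only small care needed is integrality of $\delta \binom{d+n}{n}$, handled by a floor, and tracking that the hypothesis $d < rq^2$ is precisely what the preceding lemma requires.
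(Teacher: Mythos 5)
Your proposal is correct and matches the paper's intended argument: the corollary is stated as an immediate consequence of the rank lemma for $\EVAL^r(S,W_{d,n})$, obtained exactly as you do by choosing a maximal $\F_q$-linearly independent set of columns and observing that a vanishing linear combination of the corresponding monomials (to multiplicity $r$ on $S$) would be a zero combination of those columns. Your remark about taking a floor of $\delta\binom{d+n}{n}$ is a reasonable reading of the statement.
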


\subsection{Proving the bound on Furstenberg sets}

We first give a brief description of the  polynomial method argument as was used for example in~\cite{KLSS2011}. Given a $(k,\gamma q^2,\beta)$-Furstenberg set $K$ we take a polynomial $Q$ of degree at most $d$ (where~$d$ depends on $\beta, \gamma$ and $q$) which vanishes with high multiplicity on $K$. If $|K|$ small, such a polynomial can be found by solving a system of linear constraints. For at least a $\beta$ fraction of the flats $A\in \cL_2^*(\F^n_q)$ there is a shift $a+A,a\in \F_q^n$ such that $a+A$ is $\gamma q^2$-rich with respect to $K$. By restricting $Q$ to $a+A$ and using Lemma~\ref{multSchwartz} we then show that $Q$ vanishes identically on $a+A$ which implies that the highest degree homogeneous part of $Q$ vanishes identically  on $A$. This will imply that the highest degree homogenous part of $Q$ vanishes on a $\beta$ fraction of $A\in \cL^*_2(\F^n_q)$. Another application of the Lemma~\ref{multSchwartz} then gives us a size bound for $|K|$ by arguing that $\deg(Q)$ cannot be too small (here, the dependency between $\beta$ and $d$ comes into play). The size of $K$ is lower bounded by the number of at most degree $d$ monomials $\binom{d+n}{n}$. The dependence of $\beta$ on $d$ leads to  a loss of $\beta^n$ in the final bound.

In \cite{ORW22} they overcome this problem by using random rotations to reduce to the case of constant $\beta$. We overcome this loss by instead using Corollary~\ref{cor:goodMono} to start out with a subset of monomials of degree at most $d'$ (here $d'$ will not depend on $\beta$) of size $\beta \binom{d'+n}{n}$ such that any $\F_q$-linear combination of those will not vanish on the $\beta$ fraction of flats in $\cL^*_k(\F^n_q)$ which have $\gamma q^2$-rich shifts with respect to $K$. Now the standard polynomial method argument will let us prove Lemma~\ref{lem-fur2-intro}.

\lemfurii*


\begin{proof}
Let 
$$t=\lceil \gamma q^2\rceil/q\ge \gamma q.$$
As $K$ is a $(2,\gamma q^2,\beta)$-Furstenberg set then there exists a subset $\cF\subseteq \cL^*_2(\F_q^n)$ of size at least $\beta|\cL^*_2(\F_q^n)|$ such that for every $A\in \cF$ there exists a $tq=\lceil \gamma q^2\rceil$-rich shift $a+A$ for some $a\in \F_q^n$. To $\cF$ we can also associate a set of elements $$\cF'=\{ut_1+vt_2|u,v\in \F_q^n, \spanV\{u,v\}\in \cF\}\subseteq \F_q(t_1,t_2)^n \subset \cV_{\text{full}}.$$ 

Note, in general for each flat $A\in \cL^*_2(\F^n_q)$ there are $(q^2-1)(q^2-q)$ elements in $\cV_{\text{full}}$ corresponding to it (because there are $(q^2-1)(q^2-q)$ ordered pairs of vectors which span $A$) and each element $ut_1+vt_2\in \cV_{\text{full}}$ corresponds to a unique choice of basis. Thus, we have $$|\cF'|\ge \beta|\cV_{\text{full}}|.$$ 

Let  $\ell$ be an integer parameter (we will later send $\ell$ to infinity) and take
$$m=(q^2+t-1)\ell$$
and $$d=q^2t\ell-1$$
for $\ell\in \N$. As $d<q^2t\ell$, using Corollary \ref{cor:goodMono} we can find a set $P_{\cF'}(d,t\ell)$ of monomials of degree at most $d$ so that $$|P_{\cF'}(d,t\ell)| \geq \beta\binom{d+n}{n}$$ and such that no $\F_q$-linear combination of monomials in $P_{\cF'}(d,t\ell)$ vanishes over all points in~$\cF'$ with multiplicity at least $t\ell$.   
If 
$$\beta\frac{\binom{d+n}{n}}{\binom{m+n-1}{n}}> |K|,$$ 
then we can find (by solving a system of homogeneous linear equations) a non-zero polynomial $Q\in \F_q[x_1,\hdots,x_n]$ of degree at most $d$ spanned by monomials in $P_{\cF'}(d,t\ell)$ vanishing with multiplicity at least $m$ on every point in $K$. Let $Q^H$ be the highest degree homogenous part of $Q$.

\begin{claim}
For any $x\in \cF'$ we have
$\text{mult}(Q^H,x)\ge t\ell.$
\end{claim}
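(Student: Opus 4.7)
The plan is to show, for each multi-index $\bi\in\Z_{\ge 0}^n$ of weight less than $t\ell$, that the restriction of the Hasse derivative $Q^{(\bi)}$ to the $2$-flat $a+A$ vanishes identically, and then extract $(Q^H)^{(\bi)}(ut_1+vt_2)=0$ by comparing top-degree components in $(t_1,t_2)$. Fix $A=\spanV\{u,v\}\in\cF$ together with a shift $a+A$ satisfying $|K\cap(a+A)|\ge tq$, and set $x=ut_1+vt_2$.

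For the flat-vanishing step, Lemma~\ref{lem:chainRule} first yields $\text{mult}(Q^{(\bi)},p)\ge m-\text{wt}(\bi)$ at every point of $K$. Setting $\tilde Q_{\bi}(s,r):=Q^{(\bi)}(a+su+rv)\in\F_q[s,r]$, Lemma~\ref{lem:multComp} transports this multiplicity to each $(s_0,r_0)\in\F_q^2$ whose image lies in $K\cap(a+A)$. Since $\tilde Q_{\bi}$ has degree at most $d-\text{wt}(\bi)$, Lemma~\ref{multSchwartz} bounds $\sum_{(s_0,r_0)\in\F_q^2}\text{mult}(\tilde Q_{\bi},(s_0,r_0))\le(d-\text{wt}(\bi))q$ whenever $\tilde Q_{\bi}\neq 0$. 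Comparing with the lower bound $tq(m-\text{wt}(\bi))$ and plugging in $m=(q^2+t-1)\ell$, $d=q^2 t\ell-1$, the gap equals $(t-1)(t\ell-\text{wt}(\bi))+1$, which is strictly positive for every $\text{wt}(\bi)\le t\ell$. Hence $\tilde Q_{\bi}\equiv 0$ whenever $\text{wt}(\bi)<t\ell$.

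Next I pass to $Q^H$ by substituting $s=t_1$, $r=t_2$ in $\tilde Q_{\bi}=0$ to obtain $Q^{(\bi)}(a+t_1u+t_2v)=0$ in $\F_q[t_1,t_2]$. The component of total degree $\deg Q^{(\bi)}$ in $(t_1,t_2)$ of the left-hand side is exactly $(Q^{(\bi)})^H(ut_1+vt_2)$, so this quantity vanishes. When $(Q^H)^{(\bi)}\neq 0$ we have $(Q^{(\bi)})^H=(Q^H)^{(\bi)}$, yielding $(Q^H)^{(\bi)}(ut_1+vt_2)=0$ directly; when $(Q^H)^{(\bi)}=0$ this conclusion is trivial. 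Either way, every Hasse derivative of $Q^H$ of weight less than $t\ell$ vanishes at $ut_1+vt_2$, which is precisely $\text{mult}(Q^H,ut_1+vt_2)\ge t\ell$.

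The main obstacle is keeping the two notions $(Q^{(\bi)})^H$ and $(Q^H)^{(\bi)}$ straight: they coincide exactly when $(Q^H)^{(\bi)}$ is nonzero, and the top-degree extraction only goes through cleanly by splitting into these two cases. The quantitative heart of the argument is the identity $tm-d=t\ell(t-1)+1$, which forces the Schwartz--Zippel inequality to fail at precisely the intended threshold and is the reason for the particular choices of $m$ and $d$ made before the claim.
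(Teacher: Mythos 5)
Your proof is correct and follows essentially the same route as the paper's: apply the chain rule for Hasse derivatives, restrict $Q^{(\bi)}$ to the $tq$-rich shift of the $2$-flat, use the multiplicity Schwartz--Zippel bound with the choice $m=(q^2+t-1)\ell$, $d=q^2t\ell-1$ to force identical vanishing, and then read off the top-degree homogeneous component in $(t_1,t_2)$ to conclude $(Q^H)^{(\bi)}(ut_1+vt_2)=0$ for all $\text{wt}(\bi)<t\ell$. Your explicit case split between $(Q^{(\bi)})^H$ and $(Q^H)^{(\bi)}$ is a small extra precaution at a point the paper states more tersely, but it does not change the argument.
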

\begin{proof}
    Let $\bj\in \Z_{\ge 0}^n$ be such that $\text{wt}(\bj)<t\ell$. By Lemma \ref{lem:chainRule}, $Q^{(\bj)}$ vanishes on $K$ with multiplicity at least $m-\text{wt}(\bj)$. $Q^{(\bj)}$ is also of degree at most $d-\text{wt}(\bj)$
    
    By construction for every element $ut_1+vt_2\in \cF'$ there exists an element $c_{u,v}\in \F_q^n$ such that $c_{u,v}+\{ut_1+vt_2|t_1,t_2\in \F_q\}$ is $qt$-rich with respect to $K$. By Lemma \ref{lem:multComp} we have that the bivariate polynomial  $Q^{(\bj)}(c_{u,v}+ut_1+vt_2)$ vanishes on $qt$ many points in $ \F_q^2$ with multiplicity at least $m-\text{wt}(\bj)$.
    
    By Lemma \ref{multSchwartz}, we have that,
    if $Q^{(\bj)}(c_{u,v}+ut_1+vt_2)$ is non-zero then,
    $$(d-\text{wt}(\bj))q\ge (m-\text{wt}(\bj))qt.$$
    Rearranging gives us:
    $$d+\text{wt}(\bj)(t-1)\ge mt.$$
    Substituting $d=q^2t\ell-1$, $m=(q^2+t-1)\ell$ and using the fact that $\text{wt}(\bj)<t\ell$ gives us:    
    $$q^2t\ell-1 +(t-1)t\ell > q^2t\ell+(t-1)t\ell.$$
    This leads to a contradiction. This means that $Q^{(\bj)}(c_{u,v}+ut_1+vt_2)$ is identically $0$. We note, $Q^{(\bj)}(c_{u,v}+ut_1+vt_2)\in \F_q[t_1,t_2]$ and its highest degree homogeneous part is $(Q^H)^{(\bj)}(ut_1+vt_2)$. This means $(Q^H)^{(\bj)}(ut_1+vt_2)=0$ for all $\bj$ such that $\text{wt}(\bj)<t\ell$. This proves the claim. 
\end{proof}

As $Q^H$ is a non-zero polynomial with coefficients in $\F_q$ spanned  by monomials in $P_{\cF'}(d,t\ell)$ and it vanishes with multiplicity at least $t\ell$ on every point in $\cF'$, we get a contradiction to Corollary \ref{cor:goodMono}. Therefore, we can conclude that
$$\beta\frac{\binom{d+n}{n}}{\binom{m+n-1}{n}}\le |K|.$$
Substituting $d=q^2t\ell-1$, $m=(q^2+t-1)\ell$ gives us:
$$|K|\ge \beta\frac{(q^2t\ell-1+n)(q^2t\ell-2+n)\hdots(q^2t\ell)}{((q^2+t-1)\ell+n-1)((q^2+t-1)\ell+n-2)\hdots ((q^2+t-1)\ell)}\,. $$
Letting $\ell\rightarrow \infty$ gives us:
$$|K|\ge \beta t^n \left(1+\frac{t-1}{q^2}\right)^{-n}\,.$$
As $q\ge t\ge \gamma q$ the proof of the lemma is complete.
\end{proof}

We note the arguments in this section easily generalizes for $(k,\gamma q^k,\beta)$-Furstenberg sets for all $k\ge 1$ to prove the following theorem.

\begin{theorem}[Size of $(k,\gamma q^k,\beta)$-Furstenberg Sets]
For any $\gamma \in [0,1],\beta\in [0,1], n\in \N$, $q$ a prime power every $(k,\gamma q^k,\beta)$-Furstenberg set $K\subseteq \F_q^n$ has size at least,
$$|K|\ge \beta\gamma^n q^n\left(1+\frac{1}{q^{k-1}}\right)^{-n}.$$
\end{theorem}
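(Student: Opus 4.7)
The plan is to generalize the proof of Lemma~\ref{lem-fur2-intro} by replacing the two formal variables $t_1,t_2$ with $k$ formal variables $t_1,\ldots,t_k$, so that a $k$-flat $a + \spanV\{u_1,\ldots,u_k\}$ corresponds to a vector $a + u_1 t_1 + \cdots + u_k t_k$ in $(\F_q[t_1,\ldots,t_k])^n$. I would define $\cV = \{u_1 t_1 + \cdots + u_k t_k \mid u_i \in \F_q^n\}$ and $\cV_{\text{full}} \subset \cV$ to be the subset on which $u_1,\ldots,u_k$ are $\F_q$-linearly independent, and lift the Furstenberg directions $\cF \subseteq \cL_k^*(\F_q^n)$ to a set $\cF' \subseteq \cV_{\text{full}}$ with $|\cF'|\ge \beta|\cV_{\text{full}}|$, exactly as in the $k=2$ case.

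First I would verify that the three rank lemmas from Section~\ref{sec-furstenberg-proof} (and hence Corollary~\ref{cor:goodMono}) carry over verbatim: the Schwartz-Zippel computation over the product set $\cV$ of size $q^{kn}$ requires only $d < r q^k$, the extension from $\cV_{\text{full}}$ to $\cV$ uses the same substitution trick (setting $t_i = 0$ and permuting the formal variables), and the transitive action of $\text{GL}_n(\F_q)$ on $\cV_{\text{full}}$ still yields the $\delta$-fraction rank bound. The conclusion is that, for any $d < r q^k$, one can find $|P_{\cF'}(d,r)| \ge \beta \binom{d+n}{n}$ monomials no non-trivial $\F_q$-combination of which vanishes to order $r$ on $\cF'$.

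Next, with $t = \lceil \gamma q^k\rceil/q^{k-1} \ge \gamma q$ (so that each lifted direction has a $tq^{k-1}$-rich shift), I would set $m = (q^k + t - 1)\ell$ and $d = q^k t\ell - 1$ for a large integer $\ell$, and argue by contradiction: if $|K| < \beta \binom{d+n}{n} / \binom{m+n-1}{n}$, then solving a homogeneous linear system produces a nonzero polynomial $Q$ of degree at most $d$, supported on monomials in $P_{\cF'}(d,t\ell)$, vanishing with multiplicity $m$ on $K$. For each $\bj$ with $\mathrm{wt}(\bj) < t\ell$, the composition $Q^{(\bj)}(c_{\mathbf u}+u_1 t_1 + \cdots + u_k t_k)$ is a $k$-variate polynomial of degree at most $d - \mathrm{wt}(\bj)$ that vanishes with multiplicity $m - \mathrm{wt}(\bj)$ on $tq^{k-1}$ points in $\F_q^k$; Schwartz-Zippel with multiplicities (Lemma~\ref{multSchwartz}) forces $(m - \mathrm{wt}(\bj))\,t \le d - \mathrm{wt}(\bj)$, which rearranges to $mt - d \le \mathrm{wt}(\bj)(t-1) < t\ell(t-1)$. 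Substituting the chosen $m$ and $d$ collapses this inequality to $1 < 0$, so the composed polynomial is identically zero, hence its top-degree part $(Q^H)^{(\bj)}(u_1 t_1 + \cdots + u_k t_k)$ vanishes on $\cF'$, contradicting the generalized Corollary~\ref{cor:goodMono}.

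Finally, the size bound $|K| \ge \beta \binom{d+n}{n}/\binom{m+n-1}{n}$ with the chosen $m,d$ becomes, as $\ell \to \infty$,
\[ |K| \ge \beta\, t^n \left(1 + \frac{t-1}{q^k}\right)^{-n} \ge \beta\, \gamma^n q^n \left(1 + \frac{1}{q^{k-1}}\right)^{-n}, \]
using $t \ge \gamma q$ and $t - 1 < q$. I do not anticipate a genuine obstacle: the only place where the value of $k$ enters the argument in a delicate way is the Schwartz-Zippel step (where $|U|^{n-1}$ becomes $q^{k-1}$ in both the richness count and the degree bound, and the two $q^{k-1}$ factors cancel), and the balancing of the parameters $m,d,t$ is designed precisely so that the contradiction is clean. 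The cleanest check to perform carefully is that the monomial count in Corollary~\ref{cor:goodMono}, now with the relaxed condition $d < r q^k$, is still compatible with the choice $d = q^k t\ell - 1$ and $r = t\ell$, which it is.
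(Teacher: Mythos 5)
Your proposal is correct and is essentially the paper's own argument: the paper proves the $k=2$ case in detail and merely remarks that the same polynomial-method argument generalizes to all $k$, which is exactly the generalization you carry out, with the parameters $t=\lceil\gamma q^k\rceil/q^{k-1}$, $m=(q^k+t-1)\ell$, $d=q^kt\ell-1$ balancing just as in the $k=2$ case. The only small imprecision is in the step extending vanishing from $\cV_{\text{full}}$ to $\cV$: rather than just setting $t_i=0$ and permuting variables, one should (as the paper already does for $k=2$ with $ct_1+t_2$) substitute arbitrary $\F_q$-linear forms in the $t_i$'s, which is harmless since $f$ and its Hasse derivatives have coefficients in $\F_q$.
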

The \cite{ORW22} reduction gives a quantitatively worse bound of $\beta\gamma^n q^n (2^{n}\log_2(2en)e)^{-1}$ compared to $\beta\gamma^n q^n 2^{-n}$ for $k=1$. Note that for $k\ge 5$, Theorem~\ref{thm-existsSB} gives us much better bounds for $q>> n$.

\printbibliography

\end{document}